\theoremstyle{plain}
\newtheorem{Th}{Theorem}[section]
\newtheorem{Lem}{Lemma}[section]
\newtheorem{Prop}{Proposition}[section]
\newtheorem{Cor}{Corollary}[section]
\newcommand{\tee}{\tau(A,u)}
\newcommand{\be}{\bar e}
\newcommand{\pa}{P_{A}}
\newcommand{\adj}{^\top}
\newcommand{\cc}{\rho(A)}
\newcommand{\bv}{\bar v}
\newcommand{\cp}{{\cal P}}
\newcommand{\hlambda}{\hat\lambda}
\newcommand{\tlambda}{\tilde\lambda}
\newcommand{\blambda}{\bar\lambda}
\newcommand{\clambda}{\check\lambda}
\newcommand{\poi}{(P)}
\newcommand{\poip}{(P')}
\newcommand{\alt}{\mathrm{({\it Alt})}}
\newcommand{\lbi}{(\mathit{LB}_i)} 
\newcommand{\lb}{(\mathit{LB})}
\newcommand{\1}{L}
\newcommand{\2}{Q}
\newcommand{\3}{E}
\newcommand{\malt}{\mathrm{({\it Alt'})}}
\newcommand{\mmalt}{\mathrm{({\it Alt''})}}
\newcommand{\galt}{\mathrm{({\it Alt'''})}}
\newcommand{\nulla}{\mathrm{{\it Null}}(A)}
\newcommand{\infeas}{{\cal D}}
\theoremstyle{definition}
\newtheorem{Rem}{Remark}[section]
\newtheorem{Assu}{Assumption}[section] 
\newtheorem{Prob}{Challenge}
\DeclareMathOperator*{\argmax}{\arg\!\max}
\DeclareMathOperator*{\sgn}{sgn}
\DeclareMathOperator*{\diag}{diag}
\DeclareMathOperator*{\vol}{vol}
\algnewcommand\algorithmicinput{\textbf{Input:}}
\algnewcommand\INPUT{\item[\algorithmicinput]}
\algnewcommand\algorithmicoutput{\textbf{Output:}}
\algnewcommand\OUTPUT{\item[\algorithmicoutput]}
\begin{document} 

\title{An Oblivious Ellipsoid Algorithm for Solving a System of (In)Feasible Linear Inequalities\thanks{The authors thank the associate editor and two anonymous referees for their diligent efforts and for their constructive and very helpful comments.}}   
\author{
Jourdain Lamperski\thanks{MIT Operations Research Center, 77 Massachusetts Avenue, Cambridge, MA   02139 ({mailto:  jourdain@mit.edu}).  This author's research is supported by AFOSR Grant No. FA9550-19-1-0240.} \and Robert M. Freund\thanks{MIT Sloan School of Management, 77 Massachusetts Avenue, Cambridge, MA   02139 ({mailto:  rfreund@mit.edu}).  This author's research is supported by AFOSR Grant No. FA9550-19-1-0240.} \and Michael J. Todd\thanks{Cornell University, mjt7@cornell.edu} }
\date{\today}
\maketitle

\begin{abstract} 
The ellipsoid algorithm is a fundamental algorithm for computing a solution to the system of $m$ linear inequalities 
in $n$ variables $\poi: A^{\top}x \le u$ when its set of solutions has positive volume. However, when $\poi$ is infeasible, the ellipsoid algorithm has no mechanism for proving that $(P)$ is infeasible. This is in contrast to the other two fundamental algorithms for tackling $\poi$, namely the simplex method and interior-point methods, each of which can be easily implemented in a way that either produces a solution of $\poi$ or proves that $\poi$ is infeasible by producing a solution to the alternative system $\alt: A\lambda= 0$, $u^{\top}\lambda < 0$, $\lambda \ge 0$. This paper develops an Oblivious Ellipsoid Algorithm (OEA) that either produces a solution of $\poi$ or produces a solution of $\alt$. Depending on the dimensions and on other natural condition measures, the computational complexity of the basic OEA may be worse than, the same as, or better than that of the standard ellipsoid algorithm. We also present  two modified versions of OEA, whose computational complexity is superior to that of OEA when $n \ll m$. This is achieved in the first modified version by proving infeasibility without actually producing a solution of $\alt$,
and in the second modified version by using more memory.
\end{abstract}

\section{Introduction, preliminaries, and summary of results} \label{s: intro} 
Given data $(A,u) \in \mathbb{R}^{n \times m} \times \mathbb{R}^m$, the ellipsoid algorithm is a fundamental algorithm for computing a solution to the system of linear inequalities 
\begin{equation} \nonumber 
\poi: \ \  A^{\top}x \le u
\end{equation} 
when the set of solutions $\cp := \{ x \in \mathbb{R}^n: A^{\top} x \leq u \}$ has positive volume.  However, when $\poi$ is infeasible, existing versions of the ellipsoid algorithm have no mechanism for deciding if $\poi$ is infeasible.  (We use the real number model of computation throughout this paper.  In the bit model of computation the ellipsoid method will correctly decide infeasibility even though it will not produce a solution of a dual/alternative system -- instead a volume argument is used to prove infeasibility; see \cite{gls}.)  By a \emph{certificate of infeasibility} we informally mean a mathematical object that yields a proof that $\poi$ is infeasible.  For example, when and only when $\poi$ is infeasible, there exists a solution $\lambda \in \mathbb{R}^m$ to the alternative system $\alt$ below, which we formally call a \emph{type-\1} certificate of infeasibility:\medskip

\noindent \textbf{Type-\1 Certificate of Infeasibility.}  If $\lambda \in \mathbb{R}^m$ satisfies: 
\begin{equation} \nonumber 
\alt:  \ \ \left\{ \begin{array}{rcl}A\lambda & =& 0 \\  
\lambda & \ge& 0 \\
u^{\top}\lambda & <& 0 \  ,\end{array} \right.
\end{equation} 
then it is simple to demonstrate that $\poi$ is infeasible.  We refer to a solution to $\alt$ as a \emph{type-\1} certificate of infeasibility, where \1 stands for {\em linear} because the certificate is identified with a linear inequality system, and in order to distinguish it from two other types of certificates of infeasibility for $\poi$ that will be developed herein.  We view a type-\1 certificate of infeasibility as special because -- like a solution to $\poi$ -- it is a solution to a particular linear inequality system (namely $\alt$), it does not require excessive storage ($m$ coefficients), and the computation involved in verifying $\alt$ is not excessive ($O(mn)$ operations).  

The two other fundamental algorithms for tackling $\poi$, namely the simplex algorithm and interior-point methods, each can be implemented in a way that either produces a solution to $\poi$ or certifies that $\poi$ is infeasible by producing a type-\1 certificate of infeasibility.  This has begged the question of whether such a version of the ellipsoid method can be developed \cite{mjtbordeaux}, that is, can one develop an \emph{oblivious ellipsoid algorithm} that produces a solution to $\poi$ or $\alt$, {\em without knowing} which system is feasible?  Accordingly, we consider the following two challenges, the \emph{oblivious linear certification challenge} and the \emph{oblivious determination challenge}: 

\begin{Prob}[{\bf Oblivious Linear Certification}] \label{p1} 
Develop a version of the ellipsoid algorithm that produces a feasible solution of $\poi$ when $\poi$ is feasible, and produces a type-L certificate of infeasibility, i.e., a solution of $\alt$, when $\poi$ is infeasible.  
\end{Prob} 

\begin{Prob}[{\bf Oblivious Determination}] \label{p2} 
Develop a version of the ellipsoid algorithm that produces a feasible solution of $\poi$ when $\poi$ is feasible, and proves that $\poi$ is infeasible when $\poi$ is infeasible.
\end{Prob}

When $\poi$ is feasible, both Challenges \ref{p1} and \ref{p2} require producing a solution of $\poi$.  But when $\poi$ is not feasible, Challenge \ref{p1} requires producing a type-\1 certificate of infeasibility, whereas Challenge \ref{p2} only requires proving infeasibility -- though not necessarily producing a type-\1 certificate.  It follows that any resolution of Challenge \ref{p1} is also a resolution of Challenge \ref{p2}. 

Of course, one could address Challenge \ref{p1} or Challenge \ref{p2} by running the standard ellipsoid method in parallel simultaneously on $\poi$ and $\alt$. That is, one could perform (one arithmetic operation at a time) one operation of the ellipsoid algorithm applied to $\poi$ followed by one operation of the ellipsoid algorithm applied to $\alt$, and then stop when one of the two algorithms produces a solution. (Equivalently, one could run each algorithm on a separate machine.)  However, there is an aesthetic interest in developing a single oblivious ellipsoid algorithm (which we call OEA) that will either produce a solution of $\poi$ or prove that $\poi$ is infeasible by producing a solution of $\alt$.  Such a version would elevate the ellipsoid algorithm to be ``on par'' with the other two fundamental algorithms for solving $\poi$ in this regard, namely the simplex method and interior-point methods.

Before presenting a schematic of OEA and stating our main results, we first need to develop some relevant concepts and related notation. We will make the following assumption about the data throughout this paper: 

\begin{Assu} \label{assu1}  
The conic hull of the columns of $A$ is equal to $\mathbb{R}^n$, namely $\{ A \lambda : \lambda \ge 0 \} = \mathbb{R}^n$, and each of the columns $a_1,...,a_m$ of $A$ has unit Euclidean norm. 
\end{Assu}

The first part of Assumption \ref{assu1} ensures that $\poi$ is bounded if $\poi$ is feasible. Note that Assumption \ref{assu1} implies that $m > n$ and that $A$ has rank $n$.  The second part of Assumption \ref{assu1} is without loss of generality because feasible solutions of $\poi$ do not change under positive rescaling of the constraints of $\poi$, and any zero $a_j$'s either yield redundant constraints or immediate proofs of infeasibility.

We will suppose that for each $i \in \{1,...,m\}$ we know a \emph{lower bound} $\ell_i \in \mathbb{R}$ that satisfies $x \in \cp \Rightarrow a_i\adj x \ge \ell_i$. (When $\poi$ is infeasible, any $\ell_i \in \mathbb{R}$ satisfies this implication vacuously.)  Accordingly, if $\poi$ is feasible and we know lower bounds $\ell_1, \ldots, \ell_m$, then we know how to bound $a_i^{\top}x$ for $x \in \cp$ since $u_i$ is an upper bound for $a_i^{\top}x$ over all $x \in \cp$); see Figure \ref{lbpic}.  

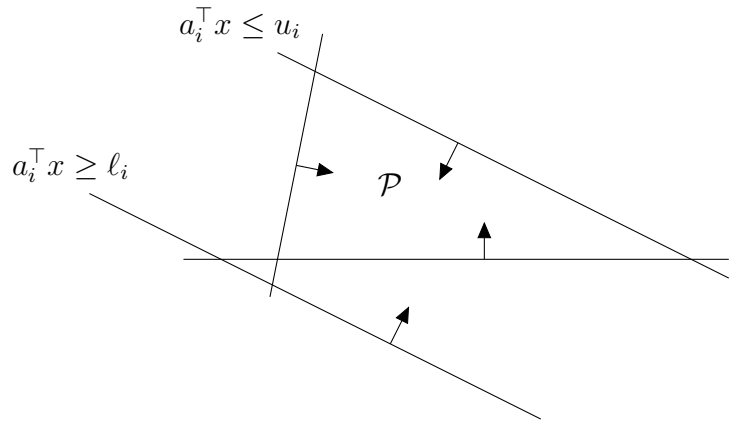
\begin{figure}[h!] 
\centering
\begin{tikzpicture}
	[scale=.5, auto=left, every node/.style={circle}] 
	
	\node (P) at (-2,2) {$\mathcal{P}$}; 
	\node (u) at (-6,6.25) {$a_i^{\top} x \leq u_i$}; 
	\node (ell) at (-10.5,2.5) {$a_i^{\top} x \geq \ell_i$}; 
	
	\draw (-7.5,0) to (7,0); 
	\draw(-5.2, -1) to (-3.8,6); 
	\draw (-5,5.5) to (7,-.5); 
	\draw (-10,1.75) to (2,-4.25); 
	
	\draw[->, -triangle 45] (0.5,0) to (0.5,1); 
	\draw[->, -triangle 45] (-4.5, 2.5) to (-3.5,2.3); 
	\draw[->, -triangle 45] (-0.2,3.1) to (-0.7,2.1); 
	\draw[->, -triangle 45] (-2, -2.25) to (-1.5,-1.25); 
	
\end{tikzpicture}
\caption{A lower bound $\ell_i$ on constraint $i$ of $\poi$.} \label{lbpic}
\end{figure} 

In fact, we will suppose more strongly that for each $i \in \{1,...,m\}$ we know $\ell_i$ and $\lambda_i \in \mathbb{R}^m$ that satisfy: 
\begin{align}
\lbi: \ \ \left \{
\begin{aligned} \nonumber 
A \lambda_i & = -a_i  \\
\lambda_i  & \geq 0 \\
-\lambda_i^{\top} u & \geq \ell_i \ , 
\end{aligned} 
\right.
\end{align} 
and observe that when $(P)$ is feasible, it follows from $\lbi$ that any $x \in \cp$ satisfies 
$$a_i^{\top} x = - \lambda_i^{\top} A^{\top} x \geq - \lambda_i^{\top} u \geq \ell_i \ ,$$
i.e., $\lambda_i$ certifies the lower bound $\ell_i$ on $a_i \adj x$ over all $x \in \cp$. We will define $\ell_i$ to be a \emph{certified lower bound for constraint $i$ of $\poi$ with certificate $\lambda_i \in \mathbb{R}^m$} if $\ell_i$ and $\lambda_i$ together satisfy $\lbi$. 

It will be convenient to collect the certified lower bounds into $\ell = (\ell_1, \ldots, \ell_m)^{\top}  \in \mathbb{R}^m$ and their certificates columnwise into a matrix $\Lambda = [\lambda_1 |  \cdots |\lambda_m] \in \mathbb{R}^{m \times m}$, and define $\ell$ to be a \emph{certified lower bound for $\poi$ with certificate matrix $\Lambda \in \mathbb{R}^{m \times m}$} if $\ell$ and $\Lambda$ satisfy 
\begin{align} 
\lb: \ \ \left \{
\begin{aligned} \nonumber 
A \Lambda  & = -A \\ 
\Lambda & \geq 0 \\ 
- \Lambda^{\top} u & \geq \ell \ ,
\end{aligned} 
\right.
\end{align} 
where the matrix inequalities $\Lambda \geq 0$ are considered entry-wise. Just as above, if $\poi$ is feasible and $\ell$ is a certified lower bound for $\poi$ with certificate $\Lambda$, then $\ell$ is a lower bound for $A^{\top} x$ over all $x \in \cp$ because for any $x \in \cp$ it holds that 
$$A^{\top} x = - \Lambda^{\top} A^{\top} x \geq - \Lambda^{\top} u \geq \ell \ .$$

In general, it is not such an easy task to construct such lower bounds $\ell$ and certificates $\Lambda$ -- short of solving systems of inequalities of size at least as large as that of $\poi$. However, in the often-occurring case when $\poi$ contains box constraints (of the form $\underline b \le x \le \bar b$), such lower bounds and certificates are quite simple to write down, which we show in Section \ref{ss: valid_lbs}.  (Recall that if $(P)$ is infeasible, then any $\ell_i$ is a lower bound. In theory, we can find a solution to $\lbi$ by obtaining a nonnegative solution to $A \hat \lambda = - a_i$ by Assumption \ref{assu1} and then adding to it a suitably large multiple of a solution to $\alt$.) 

We can use a certified lower bound $\ell$ together with an arbitrary given $d \in \mathbb{R}^m$ satisfying $d > 0$ to construct a parametrized ellipsoid $E(d,\ell)$ that contains $\cp$: 
\begin{equation}\label{reps}
\cp \subseteq E(d,\ell)  := \left \{x \in \mathbb{R}^n : (A^{\top}x -\ell)^{\top}D(A^{\top} x - u) \leq 0 \right \}  \ , \end{equation}
(where $D := \diag(d)$ is the diagonal matrix with diagonal $d$), since $x \in \cp \Rightarrow (a_i\adj x -u_i )d_i (a_i\adj x -\ell_i) \le 0$ for all $i=1, \ldots, m$.  Using some elementary algebraic manipulation, we can re-write $E(d,\ell)$ as:
\begin{equation}\label{reps2} E(d,\ell) = \left \{ x \in \mathbb{R}^n : (x-y(d,\ell))^{\top}ADA^{\top}(x - y(d,\ell)) \leq f(d,\ell) \right \} \ , 
\end{equation} 
where 
\begin{align*} 
y(d,\ell) &:= \tfrac{1}{2}(ADA^{\top})^{-1} AD(u + \ell) \ , \\
f(d,\ell) &:= \tfrac{1}{4}(u+\ell)^{\top}DA^{\top}(ADA^{\top})^{-1} AD(u+\ell)- \ell^{\top}D u \ ,
\end{align*} 
and we see from \eqref{reps2} that $y(d,\ell)$ is the center of the ellipsoid $E(d,\ell)$, $ADA^{\top}$ is the so-called shape matrix, and $\sqrt{f(d,\ell)}$ captures the scale factor of the ellipsoid.  

The representation (\ref{reps})-(\ref{reps2}) was introduced by Burrell and Todd \cite{bt85} to generate dual variables in the ellipsoid method. They developed a variant of the standard ellipsoid method with deep cuts that represented each ellipsoid in the form $E(d,\ell)$ (with $d \geq 0$, not necessarily positive); the difference was that sometimes it was necessary to update the lower bounds  before applying the standard deep cut update.

In the Oblivious Ellipsoid Algorithm that we develop in this paper, we will also update the ellipsoid $E(d,\ell)$ by updating its parameters $(d,\ell) \rightarrow (\tilde d, \tilde \ell)$ (as opposed to explicitly updating the center and shape matrix as is done in the conventional ellipsoid algorithm).  Hence $\ell$ (and its certification matrix $\Lambda$) should be thought of as parameters that are given an initial value and then are updated in the course of running the algorithm. We will also
maintain $d$ positive throughout.

Our Oblivious Ellipsoid Algorithm will update $\ell$ in synch with updates of $\Lambda$ so that the updated $\ell$ is always certified by the updated $\Lambda$.  For motivation why OEA updates $\ell$ and $\Lambda$, suppose at a given iteration we have $\ell$ that is certified by $\Lambda$ and it holds that $\ell_j$ satisfies $\ell_j > u_j$ for some $j \in \{1,...,m\}$ (so that clearly $\poi$ is infeasible). Then it is straightforward to verify (see Burrell and Todd \cite{bt85}, and also Corollary \ref{heatwave} here) that $\bar{\lambda}_j := \lambda_j + e_j$ is feasible for $\alt$ and so is a type-\1 certificate of infeasibility.  This will be our main method for constructing a type-\1 certificate of infeasibility in our algorithm, so we state this result formally as follows.

\begin{Rem}\label{p: cert_inf_const-1} 
Suppose $\ell_j$ is a certified lower bound for inequality $j$ with certificate $\lambda_j$, and that $\ell_j > u_j$.  Then $\poi$ is infeasible, and $\bar\lambda_j := \lambda_j + e_j$ is feasible for $\alt$ and hence is a type-\1 certificate of infeasibility. 
\end{Rem} 

We can also use $\ell$ and $\Lambda$ satisfying $\lb$ to construct certificates of infeasibility that are different from a type-\1 certificate.  Let us show two ways that this can be done, which we will call \emph{type-\2} and \emph{type-\3} certificates of infeasibility, respectively.   

\medskip

\noindent \textbf{Type-\2 Certificate of Infeasibility.} Let $d \in \mathbb{R}^m$ satisfying $d > 0$ be given, and let $\ell$ be a certified lower bound for $\poi$ with certificate matrix $\Lambda$. It follows from \eqref{reps} and \eqref{reps2} that if  $f(d,\ell) \leq 0$ and $A\adj y(d,\ell) \not\le u$, then $\poi$ is infeasible. Thus, $d \in \mathbb{R}^m$, $\ell \in \mathbb{R}^m$, and $\Lambda \in \mathbb{R}^{m \times m}$ that satisfy 
\begin{align} 
\begin{aligned} \nonumber
d & > 0 \\ 
A \Lambda  & = -A \\ 
\Lambda & \geq 0 \\ 
- \Lambda^{\top} u & \geq \ell \\ 
f(d,\ell) & \leq 0 \\
A\adj y(d,\ell) & \not\le u \ 
\end{aligned} 
\end{align} 
comprise a certificate of infeasibility, which we will refer to as a type-\2 certificate of infeasibility, where \2 stands for \emph{quadratic} because the fifth system above is a quadratic inequality in $\ell$. (And later in this paper, we will show how to construct a type-\1 certificate of infeasibility from a type-\2 certificate of infeasibility; see Proposition \ref{p: alg_correct_f_leq_0}.)

\medskip

\noindent \textbf{Type-\3 Certificate of Infeasibility.} Let $d \in \mathbb{R}^m$ satisfying $d > 0$ be given, and let $\ell$ be a certified lower bound for $\poi$ with certificate matrix $\Lambda$.  Suppose that $f(d,\ell) >0$, whereby from \eqref{reps2} it follows that $E(d,\ell)$ has positive volume.

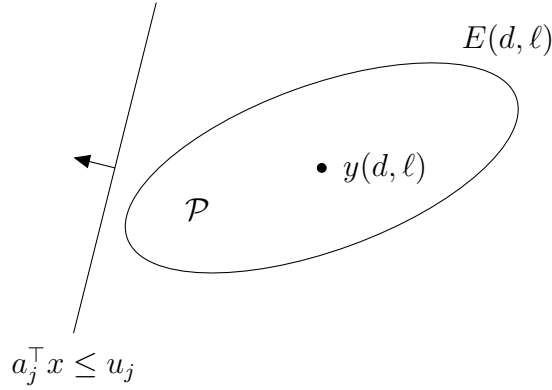
\begin{figure}[h!] 
\centering
\begin{tikzpicture}
	[scale=.55, auto=left, every node/.style={circle}]

	\node (P) at (-3,-1) {$\mathcal{P}$}; 
	\node (E) at (4.5,3.1) {$E(d,\ell)$}; 
	\node (C) at (-6,-4.8) {$a_j^{\top} x \leq u_j$}; 
	
	\draw[->, - triangle 45] (-5,0) to (-6, .25); 
	
	\node[draw, fill, scale = .3, label = right:{$y(d,\ell)$}] (o) at (0,0) {};
	\draw[rotate =20] (0,0) ellipse (5cm and 2cm);
	\draw (-6,-4) to (-4,4); 
\end{tikzpicture}
\vspace{-6mm}
\caption{All points in $E(d,\ell)$ violate constraint $j$ of $\poi$.} \label{allellviol}
\end{figure} 

\noindent It then follows from \eqref{reps} and \eqref{reps2} that if there exists $j \in \{1,...,m\}$ satisfying
$$u_j < \min_{x \in E(d,\ell)} a_j^{\top}x \ , $$
namely every point in $E(d,\ell)$ violates constraint $j$ of $\poi$, then $\poi$ is infeasible (see Figure \ref{allellviol}).  Now notice that 
$$\min_{x \in E(d,\ell)} a_j^{\top}x = a_j\adj y(d, \ell) - \sqrt{f(d,\ell)} \sqrt{a_j\adj (ADA\adj)^{-1}a_j} \ .$$  
Thus $d \in \mathbb{R}^m$, $\ell \in \mathbb{R}^m$, $\Lambda \in \mathbb{R}^{m \times m}$, and $j \in \{1,...,m\}$ that satisfy 
\begin{align} 
\begin{aligned} \nonumber 
d & > 0 \\ 
f(d,\ell) & >0 \\
A \Lambda  & = -A \\ 
\Lambda & \geq 0 \\ 
- \Lambda^{\top} u & \geq \ell \\ 
u_j & < a_j\adj y(d, \ell) - \sqrt{f(d,\ell)} \sqrt{a_j\adj (ADA\adj)^{-1}a_j} 
\end{aligned} 
\end{align} 
comprise a certificate of infeasibility of $\poi$, which we will refer to as a type-\3 certificate of infeasibility, where \3 stands for {\em ellipsoid} because the bound arises from minimization over the ellipsoid $E(d,\ell)$ as just described.  Burrell and Todd \cite{bt85} show how to construct a type-\1 certificate of infeasibility from a type-\3 certificate of infeasibility, which we will review in Proposition \ref{p: cert_const} and Corollary \ref{heatwave}. \medskip

\noindent We note that there can of course be many other types of certificates of infeasibility beyond the three types just described.

\subsection{Schematic of the Oblivious Ellipsoid Algorithm} 
Algorithm \ref{a: schem} below is an informal schematic of our Oblivious Ellipsoid Algorithm.  (For the full algorithm description of OEA, see Algorithm \ref{alg: 1} and the surrounding discussion.)

\begin{algorithm}[H] 
\caption{Schematic of Oblivious Ellipsoid Algorithm (OEA)} \label{a: schem} 
\begin{algorithmic}[1] 
\INPUT data $A$ and $u$, certified lower bound $\ell$ for $\poi$ with certificate matrix $\Lambda$, and $d > 0$. 
\medskip
\State Compute $y(d,\ell)$. If $A^{\top}y(d,\ell) \leq u$, then Return $y(d,\ell)$ as a solution of $\poi$ and Stop. \label{sch1}
\State Compute $f(d,\ell)$. If $f(d,\ell) \leq 0$, then construct and Return a certificate of infeasibility and Stop.\label{sch2}
\State Compute the most violated constraint: $j \leftarrow \argmax_{i \in \{1,...,m\}} a_i^{\top} y(d,\ell) - u_i$. \label{sch3}
\State (Possibly) update certificate $\lambda_j$ if its best lower bound can be improved. \label{sch4}
\State If $\min_{x \in E(d,\ell)} a_j^{\top} x_j > u_j$, then construct and Return a certificate of infeasibility and Stop.\label{sch5}
\State Update ellipsoid $E(d,\ell)$ by updating $(d,\ell) \rightarrow (\tilde d, \tilde \ell)$. \label{sch6}
\State Re-set $(d,\ell) \leftarrow (\tilde d, \tilde \ell)$ and Goto Step \ref{sch1}. \label{sch7}
\end{algorithmic}
\end{algorithm} 

The iterates of Algorithm \ref{a: schem} are $d$, $\ell$, and $\Lambda$. In Step \ref{sch1}, we perform a ``standard'' ellipsoid algorithm step where we check if the center $y(d,\ell)$ of the ellipsoid $E(d,\ell)$ is a feasible solution of $\poi$, and if so we output $y(d,\ell)$ and stop. If we proceed to Step \ref{sch2}, then $A\adj y(d,\ell) \not\le u$. In Step \ref{sch2}, we check if $f(d,\ell) \leq 0$, and if this holds, then $d$, $\ell$, and $\Lambda$ comprise a type-\2 certificate of infeasibility, from which we can construct a type-\1 certificate of infeasibility (as will be shown in Proposition \ref{p: alg_correct_f_leq_0}).  In Step \ref{sch3} we perform another standard ellipsoid algorithm step wherein we compute the index of the most violated constraint.  (Actually, in the standard ellipsoid method it is sufficient to compute the index of any violated constraint, but computing the most violated constraint will be crucial for establishing the convergence guarantee of the Oblivious Ellipsoid Algorithm when $\poi$ is infeasible.)  In Step \ref{sch4}, we possibly update the lower bound certificate $\lambda_j$ if the update certifies a better lower bound than the largest lower bound currently certified.  In Step \ref{sch5}, we check if $\min_{x \in E(d,\ell)} a_j^{\top} x_j > u_j$, and if this condition is satisfied, then $d$, $\ell$, and $\Lambda$ comprise a type-\3 certificate of infeasibility, from which we can construct and return a type-\1 certificate of infeasibility (as will be shown in Proposition \ref{p: cert_const} and Corollary \ref{heatwave}).   In Step \ref{sch6}, we update the ellipsoid $E(d,\ell)$ by computing new values $(\tilde d, \tilde \ell)$ of the parameters of $E(\cdot, \cdot)$ which replace the current values $(d,\ell)$ in Step \ref{sch7}.  

\subsection{Summary of Main Results} 
We briefly summarize our main results concerning the Oblivious Ellipsoid Algorithm. In the case when $\poi$ is infeasible, OEA will compute a type-\1 certificate of infeasibility in $$\left \lfloor 2m(m+1) \ln \left( \frac{m+1}{2m}\frac{\lVert u - \ell\rVert }{\tau(A,u)} \right) \right \rfloor$$ 
iterations, where $\ell$ is the initial lower bound for $\poi$ certified by the initial $\Lambda$, and $\tee$ is a geometric condition number that naturally captures the extent of feasibility or infeasibility of $\poi$; see Theorem \ref{t: infeas_guar} as well as Corollary \ref{c: infeas_box_guar} which specializes the above bound to the case where $\poi$ contains box constraints.  Each iteration of OEA requires $O(m^2)$ arithmetic
operations in a straightforward implementation, whence the total computational complexity of OEA when $\poi$ is infeasible is $O(m^4 \ln \frac{1}{\tau})$. (We assume for simplicity here that $u$ and $u - \ell$ are $\Theta(1)$.)

In the case when $\poi$ is feasible, OEA will compute a solution of $\poi$ in 
$$\left \lfloor 2n(m+1) \ln \left( \frac{\lVert u - \ell\rVert }{2\rho(A) \tau(A,u)} \right) \right \rfloor$$ 
iterations, where $\ell$ is the initial certified lower bound for $\poi$ certified by the initial $\Lambda$, $\rho(A)$ is a geometric condition measure 
which captures the distance to unboundedness of $\poi$, and $\tau(A,u)$ is the geometric condition measure mentioned above (which corresponds to the radius of the largest inscribed ball in the feasible region $\cp$ in the feasible case); see Theorem \ref{t: feas_guar} as well as Corollary \ref{c: feas_box_guar} which specializes the above bound to the case where $\poi$ contains box constraints (in which case $\rho(A)$ plays no role).  Since each iteration of OEA requires $O(m^2)$ operations, the total computational complexity of OEA when $\poi$ is feasible is $O(m^3n \ln \frac{1}{\rho \tau})$.

The iteration bound in the feasible case follows from standard volume-reduction arguments.  However, in the infeasible case the iteration bound follows from a proof that at each iteration a novel potential function is reduced.  The introduction of this potential function is another contribution of our paper; see Section \ref{s: infeasible}.  We emphasize that the algorithm we develop does not know whether the problem is feasible or infeasible (hence oblivious); however, in either case, it makes progress at each iteration towards determining that fact (both in volume reduction and in the potential function). We leave open the possibility of a less oblivious algorithm that strives to make greater progress in decreasing the volume or the potential function.

Let us compare the computational complexity bounds above to the strategy of running the standard ellipsoid algorithm in parallel simultaneously on $\poi$ and $\alt$ (which we denote by the acronym ``SEAP'' here and in Table \ref{comp}). Our comparison assumes a straightforward implementation of the algorithms involved; however, we show how to reduce the number of operations per iteration of each algorithm using a more complicated implementation in Appendix \ref{impl_Alt}.  
The first two rows of Table \ref{comp} present the relevant bounds in the comparison. The standard ellipsoid scheme SEAP has potentially superior computational complexity over OEA -- $ O(mn^3\ln \frac{1}{\rho \tau})$ rather than $O(m^3n \ln \frac{1}{\rho \tau})$ total operations when $\poi$ is feasible, and $O(m(m-n)^3 \ln \frac{1}{\tau})$ rather than $O(m^4 \ln \frac{1}{\tau})$ when $\poi$ is infeasible and $\frac{m}{\rho}$ is sufficiently small. For instances when $\poi$ is infeasible, $m = O(m - n)$, and $\frac{m}{\rho}$ is sufficiently small, the two algorithms have the same computational complexity, namely $O(m^4 \ln \frac{1}{\tau})$. And for instances when $\poi$ is infeasible, $m = O(m - n)$, and $\frac{m}{\rho}$ is sufficiently large, OEA has potentially superior computational complexity over SEAP -- $O(m^4 \ln \frac{1}{\tau})$ rather than $O(m^4 \ln \frac{m}{\rho \tau})$. 

{\tiny
\begin{table}[h!]
\begin{center}
\scalebox{0.79}{
\begin{tabular}{| l || c | c | c | c | c | c |}
\hline
\multirow{4}{*}{Algorithm} & \multicolumn{2}{ c |}{} & \multicolumn{2}{ c |}{Operations} & \multicolumn{2}{ c |}{}  \\
& \multicolumn{2}{ c |}{Number of} & \multicolumn{2}{ c |}{per Iteration} & \multicolumn{2}{ c |}{Total Number} \\
& \multicolumn{2}{ c |}{Iterations} & \multicolumn{2}{ c |}{(standard implementation)} & \multicolumn{2}{ c |}{of Operations} \\
\cline{2-7} 
& $\poi$ is & $\poi$ is & $\poi$ is & $\poi$ is & $\poi$ is & $\poi$ is \\ 
& Feasible & Infeasible & Feasible & Infeasible &Feasible & Infeasible \\ 
\hline 
\hline
SEAP & $O(n^2 \ln \frac{1}{\rho \tau})$ & $O((m-n)^2 \ln \frac{m}{\rho \tau})$ & $O(mn)$ & $O(m(m-n))$ & $O(mn^3 \ln \frac{1}{\rho \tau})$ & $O(m(m-n)^3 \ln \frac{m}{\rho \tau})$  \\  
\hline 
OEA & $O(mn \ln \frac{1}{\rho \tau})$ & $O(m^2  \ln \frac{1}{ \tau})$ & $O(m^2)$ & $O(m^2)$ & $O(m^3n \ln \frac{1}{\rho\tau})$ & $O(m^4 \ln \frac{1}{\tau})$ \\
\hline 
OEA-No-Alt & $O(mn  \ln \frac{1}{\rho \tau} )$ & $O(m^2   \ln \frac{1}{ \tau} )$ & $O(mn)$ & $O(mn)$ & $O(m^2n^2 \ln \frac{1}{\rho \tau})$ & $O(m^3n \ln \frac{1}{\tau})$  \\
\hline 
OEA-MM & $O(mn  \ln \frac{1}{\rho \tau} )$ & $O(m^2   \ln \frac{1}{ \tau} )$ & $O(mn)$ & $O(mn)$ & $O(m^2n^2 \ln \frac{1}{\rho \tau})$ & $O(m^3n \ln \frac{1}{\tau})$  \\
\hline 
\end{tabular}
}
\end{center}
\caption{Computational complexity comparison of Standard Ellipsoid Algorithm in Parallel (SEAP), Oblivious Ellipsoid Algorithm (OEA), and the modified versions OEA-No-Alt and OEA-MM.} \label{comp} 
\end{table} 
}

We also present two modified versions of OEA, which we call OEA-No-Alt and OEA-MM, that require a smaller number of operations per iteration than OEA -- $O(mn)$ instead of $O(m^2)$.  Here we briefly describe these versions and their computational complexity.  Recall from earlier in this section that OEA maintains at each iteration a lower bound vector $\ell$ that is certified by a corresponding certificate matrix $\Lambda$, and that $\ell$ and $\Lambda$ are updated at each iteration. The increased work per iteration of OEA -- $O(m^2)$ instead of $O(mn)$ -- is the result of maintaining/updating the matrix $\Lambda$. It turns out that the certificate matrix $\Lambda$ is not actually used anywhere in the computations in the algorithm; rather its sole purpose is to produce a Type-\1 certificate of infeasibility (a solution of $\alt$) after such infeasibility is detected.  If one is only interested in solving Challenge \ref{p2}, i.e., correctly detecting infeasibility (but not necessarily producing a solution of $\alt$), then the updates of $\Lambda$ in OEA can be removed from the steps of the algorithm, which simplifies the work per iteration and yields $O(mn)$ operations per iteration as opposed to $O(m^2)$ operations per iteration for OEA.  The resulting modified method is called OEA-No-Alt because it does not produce a solution of $\alt$, and its computational complexity bounds are shown in the third row of Table \ref{comp}.  We point out that in the case when $\poi$ is infeasible and $ n \ll m$, the last column of the table indicates that the computational complexity of OEA-No-Alt for proving infeasibility (by correctly detecting that $\poi$ is infeasible) is $O(m^3n \ln \frac{1}{\tau})$, which is superior to the $O(m^4 \ln \frac{m}{\rho \tau})$ computational complexity of SEAP -- at the expense of not producing a solution to $\alt$.  We refer the reader to Section \ref{oeas} for details.  (However, we present a specific reformulation of $\poi$ in Appendix \ref{impl_Alt} that reduces the number of operations required in each iteration of both SEAP and OEA-No-Alt, eliminating this complexity advantage at the cost of possibly increasing the values of the condition measure $\tau$ and $\rho$ of the problem.)

The second modified version of OEA that we develop is called OEA-MM because it uses more memory.  This version of OEA postpones updating the certificate matrix $\Lambda$ until after OEA detects infeasibility in order to again reduce the operation complexity at each iteration from $O(m^2)$ to $O(mn)$. However, OEA-MM requires more memory storage for this post-processing step. Again we refer the reader to Section \ref{oeas} for details.

\medskip

\noindent \textbf{Differences between the Oblivious Ellipsoid Algorithm and the Standard Ellipsoid Algorithm.} We did not see a way to use a standard version of the ellipsoid algorithm to solve Challenge \ref{p1} or \ref{p2}.  In particular, standard versions are designed to decrease the volume of the ellipsoid as much as possible at each iteration (by computing the minimum volume ellipsoid that contains the current half-ellipsoid), and we found this to be detrimental to establishing any type of guarantee when $\poi$ is infeasible. Accordingly, we develop an alternative way to update ellipsoids (see Remark \ref{r: diff1}) that sufficiently decreases the volume (to obtain a guarantee when $\poi$ is feasible) while also decreasing the value of a certain potential function that we introduce that is related to infeasibility measures.  Like the volume of a full-dimensional polytope, the potential is bounded from below, which allows us to establish a guarantee when $\poi$ is infeasible; see Section \ref{s: infeasible} for the details. 


\subsection{Literature Review}
The ellipsoid method was introduced by Yudin and Nemirovsky \cite{yn} in their study of the complexity of convex optimization, 
and independently by Shor \cite{shor}, and then famously
used by Khachiyan \cite{kha} to show that linear programming (in the bit model) is polynomial-time bounded. Both Yudin-Nemirovsky and
Khachiyan used a varying coordinate system to describe their ellipsoids, but G\'{a}cs and Lov\'{a}sz in their exposition of the method \cite{gl}
and almost all subsequent authors used the representation $\{x \in \mathbb{R}^n: (x - y)^{\top} G^{-1} (x - y) \leq 1 \}$ in terms of the center $y$ and the
shape matrix inverse $G$. Many authors developed improvements involving deep and two-sided cuts; see the survey paper \cite{bgt} and its references.
Most research concentrated on linear programming, although there was a substantial research effort devoted to consequences in
combinatorial optimization (see Gr\"{o}tschel, Lov\'{a}sz, and Schrijver \cite{gls}), and Ecker and Kupferschmid showed the effectiveness
of the method on medium-sized nonlinear programming problems \cite{ek}.

Here we are concerned with the linear case, and indeed just with linear inequalities. In the literature on linear programming, most variants of the ellipsoid method just describe
the updates to the center $y$ and the shape matrix $G$. In proving that the formulae for two-sided cut variants gave minimum-volume ellipsoids, Todd
\cite{tomv} showed that the new quadratic inequality was a convex combination of that defining the old ellipsoid and one requiring the solution to
lie between the two hyperplanes defining the two-sided cut. This insight led later to the Burrell-Todd representation described above \cite{bt85}.

We also mention that there are variants of the ellipsoid method which enclose the feasible region in a sequence of convex bodies whose volumes decrease geometrically.  One such method is the ``simplex'' method of Yamnitsky and Levin \cite{yl}, which uses simplices instead of ellipsoids as the fundamental class of convex bodies in the algorithm. The guaranteed volume reduction of their method is smaller than that of the standard ellipsoid method ($O(\exp(n^{-2}))$ instead of $O(\exp(n^{-1}))$), but may be better than that of OEA when $m \gg n^2$. Similar to the Burrell-Todd variant of the ellipsoid method and also OEA, their method iteratively maintains a certificate that the current simplex contains the feasible region, but we are not aware of research on its complexity in detecting infeasibility.

\subsection{Organization}  
In Section \ref{ss: valid_lbs} we show how to easily construct initial lower bounds and certificates when $\poi$ contains box constraints.  In Section \ref{s: geometry} we further review the ellipsoid parametrization (namely \eqref{reps} and \eqref{reps2}) of \cite{bt85} and we introduce ellipsoid slab radii, which will be an important geometric concept in the setting when $\poi$ is infeasible.  In Section \ref{s: cond_num} we introduce the condition number $\tee$ that captures the extent of feasibility or infeasibility of the system $\poi$, and we also introduce the condition measure $\cc$ that measures the distance to unboundedness of $\poi$. In Section \ref{s: updating_certificates} we review and further develop a method for updating certificates for lower bounds developed initially in \cite{bt85}.  In Section \ref{s: update} we develop our mechanism for updating the ellipsoids from one iteration to the next.   In Section \ref{s: alg} we formally state our algorithm along with convergence guarantees for the feasible and the infeasible cases.  Lastly, in Section \ref{oeas} we present two modified versions of OEA, which we denote as OEA-No-Alt and OEA-MM, together with their complexity analysis.  Most of the proofs are in the appendices at the end of the paper. 

\subsection{Notation}
The $\ell_p$ norm is denoted $\|\cdot\|_p$ for $1 \le p \le \infty$, and the operator norm of a matrix $M$ is denoted by $\|M\|_{a,b} = \max_{\|v\|_a =1} \|Mv\|_b$.  For convenience we denote the Euclidean ($\ell_2$) norm simply by $\|\cdot\|$.  For $d\in \mathbb{R}^m$, we use $D$ to denote the diagonal matrix whose diagonal entries correspond to the entries of $d$.  If not obvious from context, we use $0_k$ to denote the $k$-dimensional vector of zeros, and $I_{k \times k}$ to denote the identity matrix in $\mathbb{R}^{k \times k}$.  Let $e_i$ denote the $i^{\mathrm{th}}$ unit vector, whose dimension is dictated by context, and let $e = (1, \ldots, 1)^\top$, whose dimension is also dictated by context.  We use $[k] := \{1, \ldots, k\}$.  For a given $k$-dimensional vector $v$, the positive and negative componentwise parts of $v$ are denoted by $v^+$ and $v^-$, respectively, and satisfy $v^+ \ge 0$, $v^- \ge 0$, $v = v^+ - v^-$, and $(v^+)^{\top} v^- = 0$. To save physical space, we use the notation $[ u ; v ; w]$ to denote the concatenation of column vectors $u,v,w$ into a single new column vector.

\section{Initializing lower bounds and certificates for systems with box constraints} \label{ss: valid_lbs} 
The variant of the ellipsoid method that we develop in this paper is premised on having an initial vector of lower bounds $\ell$ with associated initial certificate matrix $\Lambda$ for the linear inequalities defining $\poi$.  In general, it is not clear how to construct such lower bounds and certificates -- short of solving related systems of inequalities of size at least as large as that of the original system.  However, in the often-occurring case when the linear inequality system defining $\poi$ contains box constraints, such lower bounds and certificates are easy to write down.  Suppose that the linear inequality system is given with box constraints, namely:
\begin{align} 
(P_B): \ \ \left \{ 
\begin{aligned} \nonumber 
 \hat{A}^{\top} x & \leq \hat{u} \\ 
 x & \leq \overline b \\ 
 x & \geq \underline b \ , \\
\end{aligned} 
\right.
\end{align} 
for given data $(\hat A, \hat u, \underline b, \overline b) \in \mathbb{R}^{n \times \hat m} \times \mathbb{R}^{ \hat m} \times \mathbb{R}^{n} \times \mathbb{R}^{n}$ satisfying $\underline b \leq \overline b$.  We can re-write system $(P_B)$ in the format $A^{\top}x \le u$ by defining 
\begin{align} 
& A := 
\begin{bmatrix}
 \hat{A} &  I_{n \times n} & -I_{n \times n}
\end{bmatrix} \label{e: A_box} \\ 
& u := 
\begin{bmatrix} 
\hat{u} \ ; \ \overline b \ ; \  -\underline b
\end{bmatrix} \ ,  \label{e: u_box}
\end{align}
and we can assume without loss of generality that:
\begin{equation}\label{drew} \hat u_i \ \ \leq \ \ \max_{\underline b \leq x \leq \overline b} {a}_i \adj x \ \ = \ \  (-(a_i)^-)^{\top} \underline b + ((a_i)^+)^{\top} \overline b \\ \ \ \  \mbox{for} \ i \in [\hat m]  \end{equation}
(as otherwise constraint $i$ would be redundant and can be removed).  Let us now see how to conveniently write down certified lower bounds and certificates for the system $A^{\top}x \le u$ defined above.  For $i \in [\hat{m}]$ define $\hat{\ell}_i \in \mathbb{R}$ to be:
\begin{equation}\label{cliff}\hat{\ell}_i := \  \min_{\underline b \leq x \leq \overline b} \hat{a}_i^{\top} x \ = \ (-(a_i)^-)^{\top} \overline b + ((a_i)^+)^{\top} \underline b \ ,  \end{equation}
and $\hat \ell := (\hat\ell_1, \ldots, \hat\ell_m)$; then it is straightforward to show that 
\begin{equation} \label{e: l_box} 
\ell := 
\begin{bmatrix} 
\hat{\ell} \ ; \ \underline b \ ; \ -\overline b 
\end{bmatrix}
\end{equation}  
is a valid lower bound vector for the system $A^{\top}x \le u$ defined above in (\ref{e: A_box})-(\ref{e: u_box}).  We construct the certificate $\lambda_i$ of constraint $i$ of $A\adj x \le u$ as follows. For $i = 1,...,\hat{m}$, define
\begin{equation} \label{e: lam1_box} 
\lambda_i = 
\begin{bmatrix} 
0_{\hat{m}} \ ; \ \hat{a}_i^- \ ; \ \hat{a}_i^+
\end{bmatrix} \ , 
\end{equation} 
for $i = \hat{m}+1,..., \hat{m}+n$, define 
\begin{equation} \label{e: lam2_box} 
\lambda_i = 
\begin{bmatrix} 
0_{\hat{m}} \ ; \ 0_n \ ; \ e_i 
\end{bmatrix} \ ,
\end{equation} 
and for $i = \hat{m}+n+1,...,\hat{m}+2n$, define
\begin{equation} \label{e: lam3_box} 
\lambda_i = 
\begin{bmatrix} 
0_{\hat{m}} \ ; \ e_i \ ; \ 0_n
\end{bmatrix} \ .
\end{equation} 
It is then straightforward to check that $\Lambda = [\lambda_1 | \ldots | \lambda_m]$ defined by (\ref{e: lam1_box}-\ref{e: lam3_box}) is a certificate for the lower bounds $\ell$ defined in (\ref{e: l_box}).  In summary, given a system of inequalities with box constraints $(P_B)$, we can conveniently re-write the system in the format $A^{\top}x \le u$ and we can easily construct initial certified lower bounds $\ell$ along with an associated certificate matrix $\Lambda$.  We can also assume without loss of generality that $l_i \leq u_i$ for $i \in [m]$, for otherwise we can easily construct a type-L certificate
of infeasibility.

Last of all, and somewhat separately, we will need the following result which is straightforward to show as a consequence of \eqref{drew}, \eqref{cliff}, and Assumption \ref{assu1}: 
\begin{equation} \label{e: box_bound} 
\lVert u - \ell \rVert \leq \left( \sqrt{\hat{m} + 2 }  \right) \|  \overline b - \underline b \| \ ,
\end{equation} 
where $u$ and $\ell$ are defined as in (\ref{e: u_box}) and (\ref{e: l_box}), respectively.

\section{Containing ellipsoids and ellipsoid slab radii} \label{s: geometry} 
We consider the ellipsoid parameterization originally developed in Burrell and Todd \cite{bt85} and introduce the geometric notion of ellipsoid slab radii. For clarity and convenience, we re-present some definitions from Section \ref{s: intro}.

For $\ell \in \mathbb{R}^m$ and $d \in \mathbb{R}^m$ with $d > 0$, recall from \eqref{reps} the ellipsoid $E(d,\ell)$:
\begin{equation}\label{sunny}E(d,\ell) := \left \{x \in \mathbb{R}^n : \left(A^{\top}x - \ell \right)^{\top} D \left(A^{\top} x - u \right) \leq 0 \right \}  \end{equation}
(where $D$ is the diagonal matrix corresponding to $d$), and note that $E(d,\ell)$ has the following properties:  
\begin{enumerate} 
\item $E(d,\ell)$ is bounded; indeed from Assumption \ref{assu1}, the columns of $A$ span $\mathbb{R}^n$ and hence $ADA^{\top}$ is positive definite. 
\item If $\ell$ is a certified lower bound for $\poi$ with some certificate $\Lambda$, then $\poi$ is contained in the ellipsoid $E(d,\ell)$. 
\item The ellipsoid $E(d,\ell)$ is invariant under positive scaling of $d$, that is, $E(d,\ell) = E(\alpha d, \ell)$ for any $\alpha >0$. 
\end{enumerate} 

Let us define the following quantities:
\begin{align*} 
 r(\ell) &:= \tfrac{1}{2}(u+\ell) \ ,  \\ 
 v(\ell) &:= \tfrac{1}{2}(u-\ell) \ ,  \\ 
 B(d) &:= ADA^{\top} \ ,  \\ 
 y(d,\ell) &:= B(d)^{-1} ADr(\ell) \ ,  \\ 
 t(d,\ell) &:= A^{\top} y(d,\ell) - r(\ell) \ ,  \\
 f(d,\ell) &:= v(\ell)^{\top}Dv(\ell) - t(d,\ell)^{\top} D t(d,\ell) \ ,
\end{align*} 
and notice that $y(d,\ell)$ and $t(d,\ell)$ are invariant under positive scaling of $d$. Here $r(\ell)$ and $v(\ell)$ are the center and ``radius'' of the line segment between $u$ and $\ell$, respectively.  Note that the parameterization in the above objects is over $d$ and $\ell$, as we consider the data $A$ and $u$ of the linear inequality system to be fixed.  It is straightforward to verify the following alternative characterization of $E(d,\ell)$ using the above quantities:
\begin{equation}\label{tgif}E(d,\ell) = \left \{ x \in \mathbb{R}^n : \left(x - y(d,\ell) \right)^{\top} B(d) \left(x - y(d,\ell) \right) \leq f(d,\ell) \right \} \ . \end{equation}
Here we see that $y(d,\ell)$ is the center of $E(d,\ell)$.  Furthermore, $E(d,\ell)$ has positive volume when $f(d,\ell) > 0$, is the point set $\{y(d,\ell)\}$ when $f(d,\ell) = 0$, and is the empty set when $f(d,\ell) < 0$. 

\begin{Rem} \label{r: scaling_remark} 
When $E(d,\ell)$ has positive volume, i.e., $f(d,\ell) >0$, and in light of the fact that $E(d,\ell)$ is invariant under positive scalings of $d$, we will often (for arithmetic convenience) rescale $d$ to $d \leftarrow \frac{1}{f(d,\ell)} d,$ in order for $d$ to satisfy $f(d,\ell) = 1$.  
\end{Rem} 
  
Now suppose that $E(d,\ell)$ has positive volume, i.e., $f(d,\ell) > 0$.  For each index $i \in [m]$ define the \emph{ellipsoid slab radius} $\gamma_i(d, \ell)$ as:
\begin{alignat*}{2}
\gamma_i(d, \ell)  := & \min_\gamma && \hspace{2mm} \gamma \\
&\text{ s.t.} && \hspace{2mm} E(d,\ell) \subseteq \{ x \in \mathbb{R}^n: \lvert a_i^{\top} x - a_i^{\top} y(d,\ell) \rvert \leq \gamma \} \ ,
\end{alignat*} 
and is so named because $\gamma_i(d,\ell)$ is the radius $\gamma$ of the smallest \emph{slab} of the form $\{ x \in \mathbb{R}^n : \lvert a_i^{\top} x - a_i^{\top} y(d,\ell) \rvert \leq \gamma \}$ containing the ellipsoid $E(d,\ell)$.  The ellipsoid slab radius  $\gamma_i(d,\ell)$ has the following properties: 
\begin{enumerate} 
\item $\gamma_i(d,\ell)$ is invariant under positive scaling of $d$; this is because $E(d,\ell)$ and $y(d,\ell)$ are invariant under positive scaling of $d$, and 
\item $\gamma_i(d,\ell)$ is alternatively characterized as:
\begin{equation} \label{running}
\gamma_i(d, \ell) \  = \ \max_x   \left \{ a_i^{\top} (x-y(d,\ell)) : x \in E(d,\ell) \right \}  \ = \  \sqrt{f(d,\ell) a_i^{\top} (ADA^{\top})^{-1} a_i } \ .
\end{equation} 

\end{enumerate} 

Notice that we have only defined $\gamma_i(d,\ell)$ when $E(d,\ell)$ has positive volume, namely $f(d,\ell) >0$.  Of course, we could extend the notions above to the case when $f(d,\ell) = 0$ (whereby $E(d,\ell)$ is the point set $\{y(d,\ell)\}$ and $\gamma_i(d,\ell) = 0$), but this will not be needed.

\section{Condition Measures of Feasibility, Infeasibility, and Boundedness} \label{s: cond_num} 

We introduce the condition measure $\tee$ which will be used to measure the extent of feasibility or infeasibility of $\poi$.  Define:
\begin{equation}\label{tau} \tau(A,u) := |z^*| \  \ \ \ \mathrm{where~~~~} z^* := \max_{x \in \mathbb{R}^n} \min_{i \in [m]}  \ (  u_i - a_i^{\top} x) \ . \end{equation}  
The following proposition lists some of the relevant properties of $\tee$.  Here we use the parametric notation $\mathcal{P}_v := \{ x \in \mathbb{R}^n : A^{\top}x \le v\}$ to keep our statements simple.

\begin{Prop}\label{taulist} Basic properties of $\tee$
\begin{enumerate}
\item[(a)] If $\mathcal{P} \ne \emptyset$, then $\tee = z^* \ge 0$ and $\tee$ is the radius of the largest $\ell_2$ ball contained in $\mathcal{P}$.
\item[(b)] If $\mathcal{P} = \emptyset$, then $\tee = -z^* > 0$ is the smallest scalar $\theta$ for which the right-hand-side perturbed system $A^{\top}x \le u + \theta e$ has a feasible solution.
\item[(c)] If $\tee = 0$, then the linear inequality system $\poi$ is ``ill-posed'' in the following sense:  for any $\varepsilon >0$ there exist perturbations $\Delta b_{\mathrm{feas}}$ and $\Delta b_{\mathrm{infeas}}$ with $\|\Delta b_{\mathrm{feas}}\|_\infty \le \varepsilon$, $\|\Delta b_{\mathrm{infeas}}\|_\infty \le \varepsilon$ for which $\mathcal{P}_{u+\Delta b_{\mathrm{feas}}}$ has a strict (interior) solution and  $\mathcal{P}_{u+\Delta b_{\mathrm{infeas}}} = \emptyset$. 
\end{enumerate} 
\end{Prop}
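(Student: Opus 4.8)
The plan is to prove the three parts of Proposition~\ref{taulist} essentially by unwinding the definition \eqref{tau} of $z^*$ as the optimal value of the ``deepest point'' program $\max_x \min_i (u_i - a_i^\top x)$, and exploiting the fact that each $a_i$ has unit norm (Assumption~\ref{assu1}).

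For part (a), suppose $\mathcal{P}\neq\emptyset$. First I would observe that any $x\in\mathcal{P}$ satisfies $\min_i(u_i - a_i^\top x)\ge 0$, so $z^*\ge 0$ and hence $\tee = z^* = |z^*|$. For the ball characterization: if $B(\bar x,r)\subseteq\mathcal{P}$, then for each $i$ and each unit vector $w$ we have $a_i^\top(\bar x + rw)\le u_i$; choosing $w = a_i$ (which has unit norm) gives $a_i^\top\bar x + r\le u_i$, i.e.\ $\min_i(u_i - a_i^\top\bar x)\ge r$, so $z^*\ge r$; taking the sup over inscribed balls gives $z^*\ge$ (inball radius). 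Conversely, if $\bar x$ attains (or nearly attains) $z^*$, then $u_i - a_i^\top \bar x\ge z^*$ for all $i$, and for any $\|w\|\le z^*$ we get $a_i^\top(\bar x + w) = a_i^\top\bar x + a_i^\top w \le a_i^\top \bar x + \|w\| \le u_i$ by Cauchy--Schwarz and $\|a_i\|=1$, so $B(\bar x, z^*)\subseteq\mathcal{P}$; hence the inball radius is $\ge z^*$. (One should note $z^*$ is attained because $\min_i(u_i-a_i^\top x)$ is a concave function that $\to-\infty$ in every direction, using Assumption~\ref{assu1} that $\{A\lambda:\lambda\ge0\}=\mathbb{R}^n$ so no direction is recessive.) Combining the two inequalities proves (a).

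For part (b), suppose $\mathcal{P}=\emptyset$. Then there is no $x$ with $u_i - a_i^\top x\ge0$ for all $i$, so $z^* < 0$, giving $\tee = -z^* > 0$. The system $A^\top x\le u+\theta e$ is feasible iff there exists $x$ with $\min_i(u_i + \theta - a_i^\top x)\ge0$, i.e.\ iff $z^* + \theta\ge0$, i.e.\ iff $\theta\ge -z^* = \tee$. So $\tee$ is exactly the smallest such $\theta$ (and it is achieved, again by attainment of $z^*$). For part (c), if $\tee=0$ then $z^*=0$; take $\Delta b_{\mathrm{feas}} = \varepsilon e$ so that $A^\top x\le u+\varepsilon e$ has optimal deepest value $z^* + \varepsilon = \varepsilon > 0$, hence a strict interior solution, and take $\Delta b_{\mathrm{infeas}} = -\varepsilon e$ so the deepest value becomes $-\varepsilon < 0$, hence $\mathcal{P}_{u - \varepsilon e}=\emptyset$; both perturbations have $\infty$-norm $\varepsilon$.

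The main obstacle I anticipate is the attainment/compactness argument: one must justify that the supremum in \eqref{tau} is attained (so that the ``smallest $\theta$'' and ``largest ball'' statements are genuine maxima, not just infima), and this is where Assumption~\ref{assu1}'s conic-spanning condition does real work --- it guarantees $\min_i(u_i - a_i^\top x)$ has bounded superlevel sets, so Weierstrass applies. Everything else is a one-line Cauchy--Schwarz computation using $\|a_i\|=1$. I would state the attainment as a short preliminary claim and then reuse it across all three parts.
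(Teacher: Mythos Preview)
Your argument is correct in every part, including the attainment claim via Assumption~\ref{assu1} (indeed, if $A^\top v \le 0$ for some $v\ne 0$, writing $v = A\lambda$ with $\lambda \ge 0$ gives $\|v\|^2 = \lambda^\top A^\top v \le 0$, a contradiction; so $\min_i(u_i - a_i^\top x)$ has bounded superlevel sets). The paper itself does not supply a proof of this proposition---it is stated with a bare \qed\ and treated as elementary---so there is nothing to compare against; your write-up simply fills in what the authors omit.
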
 \qed

\noindent Item (a) above uses the second part of Assumption \ref{assu1} that the columns of $A$ have unit $\ell_2$ norm.  Regarding (c), the concept of ``ill-posedness'' in this context was first developed by Renegar \cite{Reneg94, Reneg95a, Reneg95b}; see Appendix \ref{jimbo} for further discussion of connections between $\tee$ and Renegar's condition measure $\bar\rho(d)$ and a proof that $\tee \ge \bar\rho(d)$.  Also, $\tee$ is constructed in a similar spirit to the condition number ${\cal C}(A)$ for homogeneous linear inequalities developed by Cheung and Cucker \cite{cc2001}. 

Notice in the case when $\poi$ is infeasible that for any $x \in \mathbb{R}^n$ there is some constraint that is violated by at least $\tee$, namely there exists $i \in [m]$ for which $a_i^{\top}x \ge u_i + \tee$.

We also introduce the following condition measure (in the spirit of Renegar \cite{Reneg94}) which captures the extent to which $\mathcal{P}$ is close to unbounded: 
\begin{equation}\label{fridays}
\cc := \min_{\Delta A \in \mathbb{R}^{n \times m}}\{ \| \Delta A\|_{1,2} : \mathrm{there~exists~} v \ne 0 \ \mathrm{satisfying} \ [A + \Delta 
A]\adj v \le 0 \} \ . 
\end{equation}  
Indeed, observe that $\rho(A)$ is the $ \| \cdot \|_{1,2}$ operator norm of the smallest perturbation of the matrix $A$ for which the perturbed feasible region becomes unbounded and hence violates Assumption \ref{assu1}.

\section{Updating certificates for lower bounds, and constructing certificates of infeasibility} \label{s: updating_certificates} 
Let $d > 0$, and let $\ell$ be a certified lower bound for $\poi$ with certificate matrix $\Lambda$, and suppose that $E(d,\ell)$ has positive volume, i.e., $f(d,\ell) > 0$.  Let $i \in [m]$ be given.  From the definition of the slab radius $\gamma_i(d,\ell)$, the ellipsoid $E(d,\ell)$ is contained in the half-space $\{x \in \mathbb{R}^n : a_i^{\top} x \geq a_i^{\top} y(d,\ell) - \gamma_i(d,\ell) \}$, and therefore when $\poi$ is feasible: 
$$x \in \cp \ \Rightarrow \ x \in E(d, \ell) \ \Rightarrow \ a_i^{\top} x \geq a_i^{\top} y(d,\ell) - \gamma_i(d,\ell) \ , $$ 
and it follows that 
\[
L_i := a_i^{\top} y(d,\ell)- \gamma_i(d,\ell)
\]
is a valid lower bound on constraint $i$ of $\poi$.  This leads to the question of whether and how can one construct a certificate $\tilde\lambda_i$ for the lower bound $L_i$?  This question was answered in the affirmative in Burrell and Todd \cite{bt85}, and we 
present their solution in the following proposition which shows how to use $d$, $\ell$, and $\Lambda$ to construct a certificate $\tilde{\lambda}_i \in \mathbb{R}^m$ for the lower bound $L_i$. (Note that it is possible that $L_i \leq \ell_i$.)

\begin{Prop} \label{p: cert_const} {\bf (see \cite{bt85})}
Let $d > 0$, and $\ell$ be a certified lower bound for $\poi$ with certificate matrix $\Lambda$, suppose $E(d,\ell)$ has positive volume, and suppose that $d$ has been rescaled so that $f(d,\ell) = 1$.  Let $i \in [m]$ be given, and define $L_i := a_i^{\top} y(d,\ell)- \gamma_i(d,\ell)$, and 
\begin{equation}\label{today}
\begin{array}{rl}
& \hat{\lambda}_i := \gamma_i(d,\ell) D t(d,\ell) - DA^{\top} B(d)^{-1} a_i \ , \\ 
& \tilde{\lambda}_i := \Lambda \hat{\lambda}_i^- + \hat{\lambda}_i^+ \ .
\end{array}
\end{equation} 
Then $L_i$ is also a certified lower bound on constraint $i$ of $\poi$, with certificate $\tilde{\lambda}_i$.  In particular, it holds that $-\tilde \lambda_i^{\top} u \ge L_i$. \qed
\end{Prop}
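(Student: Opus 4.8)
The plan is to verify directly that $\tilde{\lambda}_i$ satisfies the three conditions of system $(\mathit{LB}_i)$: namely $A\tilde{\lambda}_i = -a_i$, $\tilde{\lambda}_i \geq 0$, and $-\tilde{\lambda}_i^\top u \geq L_i$. The key algebraic fact to establish first is what $A\hat{\lambda}_i$ equals. Using $B(d) = ADA^\top$ and the definition $\hat{\lambda}_i = \gamma_i(d,\ell) D t(d,\ell) - DA^\top B(d)^{-1}a_i$, I would compute
\[
A\hat{\lambda}_i = \gamma_i(d,\ell)\, AD t(d,\ell) - AD A^\top B(d)^{-1} a_i = \gamma_i(d,\ell)\, AD t(d,\ell) - a_i \ .
\]
Then, recalling $t(d,\ell) = A^\top y(d,\ell) - r(\ell)$ and $y(d,\ell) = B(d)^{-1}ADr(\ell)$, one gets $AD t(d,\ell) = AD A^\top y(d,\ell) - ADr(\ell) = B(d) y(d,\ell) - ADr(\ell) = 0$. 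Hence $A\hat{\lambda}_i = -a_i$. Since $A\Lambda = -A$ by $(\mathit{LB})$, we have $A(\Lambda \hat{\lambda}_i^-) = -A\hat{\lambda}_i^- = -A(\hat{\lambda}_i^+ - \hat{\lambda}_i) $, so $A\tilde{\lambda}_i = A\Lambda\hat{\lambda}_i^- + A\hat{\lambda}_i^+ = -A\hat{\lambda}_i^+ + A\hat{\lambda}_i + A\hat{\lambda}_i^+ = A\hat{\lambda}_i = -a_i$, giving the first condition. Nonnegativity is immediate: $\hat{\lambda}_i^+ \geq 0$ trivially, $\hat{\lambda}_i^- \geq 0$ trivially, and $\Lambda \geq 0$ entrywise by hypothesis, so $\tilde{\lambda}_i = \Lambda\hat{\lambda}_i^- + \hat{\lambda}_i^+ \geq 0$.

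The remaining (and least mechanical) step is the lower-bound inequality $-\tilde{\lambda}_i^\top u \geq L_i$. I would proceed in two stages. First bound $-\tilde{\lambda}_i^\top u$ from below using $-\Lambda^\top u \geq \ell$ (from $(\mathit{LB})$) and $\hat{\lambda}_i^- \geq 0$:
\[
-\tilde{\lambda}_i^\top u = (\hat{\lambda}_i^-)^\top(-\Lambda^\top u) + (\hat{\lambda}_i^+)^\top(-u) \geq (\hat{\lambda}_i^-)^\top \ell - (\hat{\lambda}_i^+)^\top u = -\hat{\lambda}_i^\top u + (\hat{\lambda}_i^-)^\top(\ell + u) \ ,
\]
where the last equality uses $\hat{\lambda}_i = \hat{\lambda}_i^+ - \hat{\lambda}_i^-$. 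Recognizing $\tfrac12(u+\ell) = r(\ell)$, this says $-\tilde{\lambda}_i^\top u \geq -\hat{\lambda}_i^\top u + 2(\hat{\lambda}_i^-)^\top r(\ell)$. So it suffices to show $-\hat{\lambda}_i^\top u + 2(\hat{\lambda}_i^-)^\top r(\ell) \geq L_i = a_i^\top y(d,\ell) - \gamma_i(d,\ell)$. Second, I would expand $-\hat{\lambda}_i^\top u$ using the explicit form of $\hat{\lambda}_i$ and the identities $u = r(\ell) + v(\ell)$, $D$ symmetric, $B(d)^{-1}$ symmetric, together with $f(d,\ell)=1$ and the slab-radius formula $\gamma_i(d,\ell)^2 = a_i^\top B(d)^{-1}a_i$ (from \eqref{running}, since $f=1$). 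One finds
\[
-\hat{\lambda}_i^\top u = -\gamma_i(d,\ell)\, t(d,\ell)^\top D u + a_i^\top B(d)^{-1} A D u \ ,
\]
and $ADu = AD(r(\ell)+v(\ell))$, with $ADr(\ell) = B(d)y(d,\ell)$, so $a_i^\top B(d)^{-1}ADu = a_i^\top y(d,\ell) + a_i^\top B(d)^{-1}ADv(\ell)$. The terms involving $v(\ell)$ and the $\gamma_i t(d,\ell)^\top Du$ term, combined with the $2(\hat{\lambda}_i^-)^\top r(\ell)$ slack, should reorganize (using $t(d,\ell) = A^\top y(d,\ell) - r(\ell)$ and $f(d,\ell) = v(\ell)^\top Dv(\ell) - t(d,\ell)^\top Dt(d,\ell) = 1$) into exactly $a_i^\top y(d,\ell) - \gamma_i(d,\ell)$ plus a manifestly nonnegative remainder. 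I expect this bookkeeping — carefully tracking which pieces of $-\hat{\lambda}_i^\top u$ land on the desired $L_i$ and which get absorbed by the nonnegative correction term $2(\hat{\lambda}_i^-)^\top r(\ell)$ — to be the main obstacle, since it is exactly where the choice of the splitting $\tilde{\lambda}_i = \Lambda\hat{\lambda}_i^- + \hat{\lambda}_i^+$ is engineered to make the inequality (rather than equality) work out. A cleaner route may be to first prove the "unconstrained" identity $-\hat{\lambda}_i^\top u = L_i$ would hold *if* $\hat{\lambda}_i$ were already nonnegative (i.e., that $\hat{\lambda}_i$ certifies $L_i$ as an equality against $u$ when $\hat{\lambda}_i^-=0$), and then observe that replacing each negative coordinate's contribution $-\hat{\lambda}_{ij} u_j$ by $\hat{\lambda}_{ij}^-(\Lambda^\top u)_j \cdot(-1) \ge \hat{\lambda}_{ij}^- \ell_j$ only increases (weakly) the bound because $-u_j \ge \ell_j$ is false in general — so instead one uses $-(\Lambda^\top u)_j \ge \ell_j$ directly as above. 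I would present whichever of these two organizations yields the shorter verification, deferring the full computation to the appendix as the paper's structure suggests ("Most of the proofs are in the appendices").

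Since this result is attributed to Burrell and Todd \cite{bt85}, I would keep the write-up to the verification sketch above and cite \cite{bt85} for the original derivation, emphasizing only the two load-bearing identities $ADt(d,\ell) = 0$ and $\gamma_i(d,\ell)^2 = a_i^\top B(d)^{-1}a_i$, and the single inequality step where $-\Lambda^\top u \ge \ell$ and $\hat{\lambda}_i^- \ge 0$ are combined.
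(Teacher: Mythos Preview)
Your verification of $A\tilde{\lambda}_i = -a_i$ and $\tilde{\lambda}_i \ge 0$, and your reduction of the remaining task to the inequality $(\hat{\lambda}_i^-)^\top \ell - (\hat{\lambda}_i^+)^\top u \ge L_i$, are correct and match the paper exactly. The gap is in how you propose to finish that last inequality.

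First, there is a sign slip: one has $(\hat{\lambda}_i^-)^\top \ell - (\hat{\lambda}_i^+)^\top u = -\hat{\lambda}_i^\top u + (\hat{\lambda}_i^-)^\top(\ell - u)$, not $(\ell + u)$. Since $\ell - u = -2v(\ell)$, the correction term is $-2(\hat{\lambda}_i^-)^\top v(\ell)$, which is typically \emph{nonpositive}, so your plan to absorb it as a ``manifestly nonnegative remainder'' cannot work. Your alternative ``cleaner route'' also does not go through: the identity $-\hat{\lambda}_i^\top u = L_i$ fails in general, even when $\hat{\lambda}_i^- = 0$. The underlying difficulty is that the positive/negative split of $\hat{\lambda}_i$ is not an algebraic object, so no reorganization using only $f(d,\ell)=1$ and $\gamma_i^2 = a_i^\top B(d)^{-1}a_i$ will collapse the sum.

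The paper supplies the missing geometric idea. Set $z := y(d,\ell) - \gamma_i(d,\ell)^{-1} B(d)^{-1} a_i$, the point of $\partial E(d,\ell)$ minimizing $a_i^\top x$; then $a_i^\top z = L_i$, one checks $\hat{\lambda}_i = \gamma_i D(A^\top z - r(\ell))$, and crucially $(A^\top z - \ell)^\top D(A^\top z - u) = 0$. The paper then introduces
\[
\mu_j := \tfrac{1}{2}\gamma_i\, d_j\Big[\,2a_j^\top z\,(a_j^\top z - r_j(\ell)) - (a_j^\top z - \ell_j)(a_j^\top z - u_j)\,\Big]
\]
and verifies, whenever $(\hat{\lambda}_i)_j \ne 0$, the two identities
\[
\frac{\mu_j}{(\hat{\lambda}_i)_j} \;=\; u_j + \frac{(a_j^\top z - u_j)^2}{2(a_j^\top z - r_j(\ell))} \;=\; \ell_j + \frac{(a_j^\top z - \ell_j)^2}{2(a_j^\top z - r_j(\ell))} \ .
\]
Because the denominator has the same sign as $(\hat{\lambda}_i)_j$, this gives $\mu_j \ge (\hat{\lambda}_i)_j u_j$ when $(\hat{\lambda}_i)_j > 0$ and $\mu_j \ge (\hat{\lambda}_i)_j \ell_j$ when $(\hat{\lambda}_i)_j < 0$ (and $\mu_j \ge 0$ otherwise). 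Summing and using $A\hat{\lambda}_i = -a_i$ together with the boundary equation for $z$ yields $\sum_j \mu_j = -a_i^\top z = -L_i$, hence $(\hat{\lambda}_i^+)^\top u - (\hat{\lambda}_i^-)^\top \ell \le -L_i$, which is exactly the missing inequality. The introduction of $z$ and the componentwise $\mu_j$ bounds is the substantive step that your outline does not contain.
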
 

For completeness as well for consistency with the notation used in this paper, we present a proof of Proposition \ref{p: cert_const} in Appendix \ref{app-119}.  

As a corollary, we can construct a type-\1 certificate of infeasibility from a type-\3 certificate of infeasibility: 

\begin{Cor} \label{heatwave} 
Under the set-up of Proposition \ref{p: cert_const}, if $ L_i :=  a_i^{\top} y(d,\ell)- \gamma_i(d,\ell) > u_i$, then $\bar\lambda_i := \tilde\lambda_i + e_i$ is a type-\1 certificate of infeasibility. 
\end{Cor}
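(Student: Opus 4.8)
The plan is to derive Corollary \ref{heatwave} directly from Proposition \ref{p: cert_const} and Remark \ref{p: cert_inf_const-1}. First I would invoke Proposition \ref{p: cert_const}: under its hypotheses, $L_i := a_i^\top y(d,\ell) - \gamma_i(d,\ell)$ is a certified lower bound for constraint $i$ of $\poi$ with certificate $\tilde\lambda_i$; in particular $A\tilde\lambda_i = -a_i$, $\tilde\lambda_i \ge 0$, and $-\tilde\lambda_i^\top u \ge L_i$. These are precisely the conditions $\lbi$ with $\ell_i$ replaced by $L_i$ and $\lambda_i$ replaced by $\tilde\lambda_i$.

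Next, the added hypothesis of the corollary is that $L_i > u_i$. So now $L_i$ plays the role of the ``$\ell_j$'' in Remark \ref{p: cert_inf_const-1}: it is a certified lower bound for constraint $i$ with certificate $\tilde\lambda_i$, and it strictly exceeds $u_i$. Applying Remark \ref{p: cert_inf_const-1} (with $j = i$, $\ell_j = L_i$, $\lambda_j = \tilde\lambda_i$) immediately yields that $\poi$ is infeasible and that $\bar\lambda_i := \tilde\lambda_i + e_i$ is feasible for $\alt$, hence is a type-$\1$ certificate of infeasibility. That is exactly the claimed conclusion.

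For completeness I would also spell out the one-line verification underlying Remark \ref{p: cert_inf_const-1} in this context, so the corollary is self-contained: $A\bar\lambda_i = A\tilde\lambda_i + Ae_i = -a_i + a_i = 0$; $\bar\lambda_i = \tilde\lambda_i + e_i \ge 0$ since $\tilde\lambda_i \ge 0$; and $u^\top\bar\lambda_i = u^\top\tilde\lambda_i + u_i = -(-\tilde\lambda_i^\top u) + u_i \le -L_i + u_i < 0$, using $-\tilde\lambda_i^\top u \ge L_i > u_i$. Thus $\bar\lambda_i$ solves $\alt$.

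There is essentially no obstacle here: the corollary is a direct composition of the two previously established results, and the only ``work'' is recognizing that the certified lower bound $L_i$ with certificate $\tilde\lambda_i$ produced by Proposition \ref{p: cert_const} is in the exact form required to feed into Remark \ref{p: cert_inf_const-1}. The only point requiring a moment's care is the sign bookkeeping in the strict inequality $u^\top\bar\lambda_i < 0$, which relies on chaining $u_i < L_i \le -\tilde\lambda_i^\top u$ from Proposition \ref{p: cert_const}'s conclusion together with the corollary's hypothesis.
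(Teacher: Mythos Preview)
Your proposal is correct and follows essentially the same approach as the paper: invoke Proposition \ref{p: cert_const} to obtain $A\tilde\lambda_i=-a_i$, $\tilde\lambda_i\ge 0$, $-\tilde\lambda_i^\top u\ge L_i$, then combine with $L_i>u_i$ to verify directly that $\bar\lambda_i=\tilde\lambda_i+e_i$ satisfies $\alt$. The only cosmetic difference is that you explicitly route through Remark \ref{p: cert_inf_const-1} before giving the direct verification, whereas the paper performs the verification without citing the remark.
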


\begin{proof}  From Proposition \ref{p: cert_const} it holds that $\tilde\lambda$ is a certificate for the lower bound $L_i := a_i^{\top} y(d,\ell)- \gamma_i(d,\ell) > u_i$.  Hence $A \tilde\lambda_i = -a_i$, $\tilde\lambda_i \geq 0$, and $-u^{\top} \tilde\lambda_i\geq L_i > u_i$. It follows
that $A \bar\lambda_i = 0$, $\bar\lambda_i \geq 0$, and $-u^{\top} \bar\lambda_i \geq L_i - u_i > 0$, and so $\bar\lambda_i$ is a type-\1 certificate of infeasibility.
\end{proof}

The following proposition ties together ellipsoids and slab radii, the condition measure $\tee$, updates of certificates of lower bounds, and type-\1 certificates of infeasibility.  


\begin{Prop} \label{r: cert_const} 
Under the set-up of Proposition \ref{p: cert_const}, let $i := \argmax_{h \in [m]} ( a_h^{\top} y(d,\ell) - u_h )$, and let $L_i$ and $\tilde\lambda_i$ be as defined therein.  If $\gamma_i(d,\ell) < \tau(A,u)$, then $\bar\lambda_i := \tilde\lambda_i + e_i$ is a type-\1 certificate of infeasibility.  
\end{Prop}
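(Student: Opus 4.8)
The plan is to show that the most-violated constraint index $i$ actually has $L_i > u_i$ when $\poi$ is infeasible and $\gamma_i(d,\ell) < \tau(A,u)$, so that Corollary \ref{heatwave} applies directly and produces the type-$\1$ certificate $\bar\lambda_i := \tilde\lambda_i + e_i$. So the whole task reduces to the inequality $a_i^\top y(d,\ell) - \gamma_i(d,\ell) > u_i$, i.e. $a_i^\top y(d,\ell) - u_i > \gamma_i(d,\ell)$.

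First I would invoke the key consequence of the condition measure $\tau(A,u)$ recorded right after Proposition \ref{taulist}: when $\poi$ is infeasible, for \emph{every} $x \in \mathbb{R}^n$ there is some constraint $h \in [m]$ with $a_h^\top x \ge u_h + \tau(A,u)$. Apply this at the particular point $x = y(d,\ell)$, the center of the current ellipsoid. This yields $\max_{h\in[m]}\big(a_h^\top y(d,\ell) - u_h\big) \ge \tau(A,u)$. But $i$ was chosen precisely as the argmax of $a_h^\top y(d,\ell) - u_h$ over $h$, so $a_i^\top y(d,\ell) - u_i \ge \tau(A,u)$. Combining with the hypothesis $\gamma_i(d,\ell) < \tau(A,u)$ gives $a_i^\top y(d,\ell) - u_i \ge \tau(A,u) > \gamma_i(d,\ell)$, hence $L_i = a_i^\top y(d,\ell) - \gamma_i(d,\ell) > u_i$, as desired.

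Then I would simply cite Corollary \ref{heatwave}: under the set-up of Proposition \ref{p: cert_const} (which is inherited here), $L_i > u_i$ implies $\bar\lambda_i := \tilde\lambda_i + e_i$ is a type-$\1$ certificate of infeasibility. (One should note that the hypothesis $\gamma_i(d,\ell) < \tau(A,u)$ forces $\tau(A,u) > 0$, so by Proposition \ref{taulist}(a) $\poi$ must indeed be infeasible — otherwise $\tau(A,u) = z^* \ge 0$ would be the inscribed-ball radius and the center would eventually be feasible; but the argument above does not actually need to separate cases, since if $\poi$ were feasible the conclusion about existence of a type-$\1$ certificate would be vacuous or the relevant inequality would already fail. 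The cleanest route is: assume $\poi$ infeasible, which is the only case of interest, and run the two-line argument above.)

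I do not anticipate a serious obstacle; the one thing to be careful about is making sure the "every $x$ violates some constraint by at least $\tau$" statement is used at the center $y(d,\ell)$ and that the argmax choice of $i$ is exactly what converts the existential "some $h$" into a statement about the specific index $i$ appearing in the proposition. The rescaling assumption $f(d,\ell)=1$ and the positive-volume hypothesis are needed only to make $\gamma_i(d,\ell)$ and $\tilde\lambda_i$ well-defined (they come for free from the set-up of Proposition \ref{p: cert_const}), and play no further role in this particular argument.
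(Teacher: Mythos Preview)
Your core argument is the same as the paper's: show $L_i > u_i$ by establishing $a_i^\top y(d,\ell) - u_i \ge \tau(A,u) > \gamma_i(d,\ell)$, then invoke Corollary \ref{heatwave}. Once infeasibility of $\poi$ is known, the paper proceeds identically.

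However, your treatment of \emph{why} $\poi$ must be infeasible is not correct. You write that ``$\gamma_i(d,\ell) < \tau(A,u)$ forces $\tau(A,u) > 0$, so by Proposition \ref{taulist}(a) $\poi$ must indeed be infeasible'' --- but Proposition \ref{taulist}(a) does not say this; $\tau(A,u) > 0$ is perfectly compatible with feasibility (it would then be the radius of the largest inscribed ball). Your fallback ``assume $\poi$ infeasible, which is the only case of interest'' is also not a proof: the proposition does not assume infeasibility as a hypothesis, and your key step (that every $x$ violates some constraint by at least $\tau(A,u)$) is valid only in the infeasible case, so you cannot invoke it until infeasibility has been established.

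The paper closes this gap with a one-line geometric observation: if $\cp \ne \emptyset$, then by Proposition \ref{taulist}(a) $\cp$ contains a ball of radius $\tau(A,u)$; since $\cp \subseteq E(d,\ell)$ and $E(d,\ell)$ lies in the slab of half-width $\gamma_i(d,\ell)$, a ball of radius $\tau(A,u)$ sits inside a slab of half-width $\gamma_i(d,\ell)$, forcing $\tau(A,u) \le \gamma_i(d,\ell)$ and contradicting the hypothesis. With this in place, the rest of your argument goes through verbatim.
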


\begin{proof}  
If $\mathcal{P}$ were nonempty, then a ball of radius $\tau(A,u)$ would be contained in a slab of radius $\gamma_i(d,\ell)$, so that
$\tau(A,u) \leq \gamma_i(d,\ell)$. Hence $\mathcal{P} = \emptyset$ and so from the definition of $\tau(A,u)$ it follows that 
$$a_i^{\top}y(d,\ell) - u_i \geq \tau(A,u) > \gamma_i(d,\ell) = a_i^{\top}y(d,\ell) - L_i \ , $$ 
where the first inequality is from the definition of $\tau(A,u)$ in the case when $\mathcal{P} = \emptyset$, the second inequality is by supposition, and the equality is from the definition of $L_i$ in Proposition \ref{p: cert_const}.  It then follows that $L_i > u_i$. Hence the desired result follows from Corollary \ref{heatwave}. 
\end{proof}

Propositions \ref{p: cert_const} and \ref{r: cert_const} are premised on $E(d,\ell)$ having positive volume, i.e., $f(d,\ell) >0$.  If $f(d,\ell) \le 0$, then either $f(d,\ell) <0$ or $f(d,\ell) = 0$. If $f(d,\ell) <  0$, then $E(d, \ell) = \emptyset$ from \eqref{tgif}, which implies $\cp = \emptyset$, and if $f(d,\ell)=0$, then \eqref{tgif} implies that either $\cp = \{y(d, \ell)\}$ (which is easy to verify by checking if $A\adj y(d,\ell) \le u$) or $\cp = \emptyset$.  Below we present Procedure \ref{a: cert_for_f_leq_0}, which accomplishes the task of constructing a type-\1 certificate of infeasibility when $f(d,\ell) \leq 0$ and $A\adj y(d,\ell) \not\le u$, i.e., constructing a type-\1 certificate of infeasibility from a type-\2 certificate of infeasibility.  As will be proved, Procedure \ref{a: cert_for_f_leq_0} is well-defined, in that for each step until termination, the specified quantities exist (for example, the scalar $\beta$ in Step \ref{s: beta} exists). Steps \ref{s: beta} through \ref{s: l_dec2} of Procedure \ref{a: cert_for_f_leq_0} decrease the $i$-th and $j$-th entries of $\ell$ such that the updated parameterized ellipsoid $E(d,\ell)$ has positive volume and the condition $a_k^{\top} y(d,\ell) - \gamma_k(d,\ell) > u_k$ holds for some index $k \in [m]$.  Note that in Step \ref{s: l_dec2} it holds that $\Lambda$ is still a certificate for the updated lower bounds $\ell$ because the components of the updated $\ell$ have either been decreased or remain the same.  Steps \ref{s: d_rescale} through \ref{s: cert2} use Proposition \ref{p: cert_const} to construct a certificate $\tilde{\lambda}_k$ for the lower bound $L_k := a_k^{\top} y(d,\ell) - \gamma_k(d,\ell)$ that satisfies $L_k > u_k$. In Step \ref{s: cert3}, we return $\bar\lambda_k := \tilde{\lambda}_k + e_k$, which by Remark \ref{p: cert_inf_const-1} is a type-\1 certificate of infeasibility.  

\floatname{algorithm}{Procedure}
\begin{algorithm}[H]  
\caption{Constructing a type-\1 certificate of infeasibility from a type-\2 certificate of infeasibility} \label{a: cert_for_f_leq_0}
\begin{algorithmic}[1] 
\INPUT{$d >0$, certified lower bounds $\ell$ with certificate matrix $\Lambda$, satisfying $f(d,\ell) \leq 0$ and $A\adj y(d,\ell) \not\le u$} 
\medskip
\State If $\ell \not\le u$, select an index $j$ for which $\ell_j > u_j$ and Return $\bar\lambda_j := \lambda_j + e_j$ and Stop.  \label{kanab}
\State Select any index $i \in [m]$, and compute $\beta \ge 0$ such that $f(d,\ell-\beta e_i) =0$. \label{s: beta} 
\State $\ell \leftarrow \ell - \beta e_i$. \label{s: l_dec1} 
\State Compute an index $j \in [m]$ for which $a_j^{\top}y(d,\ell) \leq u_j$. \label{s: j}  
\State Compute an index $k \in [m]$ for which $a_k^{\top} y(d, \ell) > u_k$. \label{s: k} 
\State Compute $\varepsilon > 0$ such that $f(d,\ell - \varepsilon e_j) > 0$ and $a_k^{\top} y(d, \ell - \varepsilon e_j) - \gamma_k(d, \ell -\varepsilon e_j) > u_k$.\label{s: eps} 
\State $\ell \leftarrow \ell - \varepsilon e_j$. \label{s: l_dec2}
\State $d \leftarrow \frac{1}{f(d,\ell)} d$. \label{s: d_rescale} 
\State $\hat{\lambda}_k \leftarrow \gamma_k(d,\ell) D t(d,\ell) - DA^{\top} B(d)^{-1} a_k$. \label{s: cert1} 
\State $\tilde{\lambda}_k \leftarrow \Lambda \hat{\lambda}_k^- + \hat{\lambda}_k^+$. \label{s: cert2} 
\State Return $\bar\lambda_k := \tilde{\lambda}_k+ e_k$ and Stop. \label{s: cert3} 
\end{algorithmic}
\end{algorithm} 

Proposition \ref{p: alg_correct_f_leq_0} below establishes the correctness of Procedure \ref{a: cert_for_f_leq_0}.  The proof of Proposition \ref{p: alg_correct_f_leq_0} in Appendix \ref{app-119} also indicates how to efficiently implement Steps \ref{s: beta} and \ref{s: eps} of Procedure \ref{a: cert_for_f_leq_0} using the mechanics of the quadratic formula. 

\begin{Prop} \label{p: alg_correct_f_leq_0} 
The output of Procedure \ref{a: cert_for_f_leq_0} is a type-\1 certificate of infeasibility.  \qed
\end{Prop}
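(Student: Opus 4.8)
The plan is to verify, step by step, that Procedure~\ref{a: cert_for_f_leq_0} is well-defined (every quantity it calls for exists) and that the returned vector solves $\alt$. Two invariants will be used throughout. First, whenever a coordinate of $\ell$ is decreased (Steps~\ref{s: l_dec1} and~\ref{s: l_dec2}) the inequality $-\Lambda\adj u \ge \ell$ is preserved, so $\Lambda$ remains a certificate matrix for $\ell$ and hence $\cp \subseteq E(d,\ell)$ persists. Second, $\ell \le u$ holds once Step~\ref{kanab} is passed without returning (otherwise Step~\ref{kanab} returns a type-\1 certificate by Remark~\ref{p: cert_inf_const-1} and we are done). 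The computational backbone is the behavior of $f$ under a coordinate perturbation of $\ell$: using $B(d)=ADA\adj$, the definition $y(d,\ell)=B(d)^{-1}ADr(\ell)$, and the identity $ADt(d,\ell)=0$ (immediate from $t(d,\ell)=A\adj y(d,\ell)-r(\ell)$), a direct expansion gives, for every $i\in[m]$ and $s\in\mathbb{R}$,
\[ f(d,\ell - s e_i) \;=\; f(d,\ell) \;+\; s\,d_i\bigl(u_i - a_i\adj y(d,\ell)\bigr) \;+\; \tfrac{s^2}{4}\,d_i^2\,a_i\adj B(d)^{-1} a_i \; , \]
a parabola in $s$ with strictly positive leading coefficient (since $B(d)$ is positive definite, $d_i>0$, $a_i\ne 0$).

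Since $f(d,\ell)\le 0$ on input, this parabola (for the chosen $i$) has a nonnegative root, so the scalar $\beta$ of Step~\ref{s: beta} exists, and after Step~\ref{s: l_dec1} we have $f(d,\ell)=0$, hence $E(d,\ell)=\{y(d,\ell)\}$ by \eqref{tgif}. Next, on input $f(d,\ell)\le 0$ together with $A\adj y(d,\ell)\not\le u$ already forces $\cp=\emptyset$ (if $f<0$ then $E(d,\ell)=\emptyset$; if $f=0$ then $E(d,\ell)=\{y(d,\ell)\}$ and its center is infeasible), and $\cp$ never changes; so after Step~\ref{s: l_dec1}, from $\cp\subseteq E(d,\ell)=\{y(d,\ell)\}$ and $\cp=\emptyset$ we get $A\adj y(d,\ell)\not\le u$, yielding an index $k$ with $a_k\adj y(d,\ell)>u_k$ as needed in Step~\ref{s: k}. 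For Step~\ref{s: j}, $y(d,\ell)\in E(d,\ell)$ and \eqref{sunny} give $\sum_i d_i\bigl(a_i\adj y(d,\ell)-\ell_i\bigr)\bigl(a_i\adj y(d,\ell)-u_i\bigr)\le 0$; the index $k$ contributes a strictly positive term (as $\ell_k\le u_k<a_k\adj y(d,\ell)$), so some other index $j$ must contribute a negative term, forcing $\ell_j<a_j\adj y(d,\ell)<u_j$ — in particular $a_j\adj y(d,\ell)\le u_j$, and $j\ne k$.

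For Step~\ref{s: eps}, apply the displayed identity with $i=j$ at the current $\ell$ (where $f(d,\ell)=0$): the coefficient of $s$ equals $d_j\bigl(u_j-a_j\adj y(d,\ell)\bigr)\ge 0$ by the choice of $j$, and the coefficient of $s^2$ is positive, so $f(d,\ell-\varepsilon e_j)>0$ for every $\varepsilon>0$. Moreover, as $\varepsilon\downarrow 0$ we have $y(d,\ell-\varepsilon e_j)\to y(d,\ell)$ and, by \eqref{running}, $\gamma_k(d,\ell-\varepsilon e_j)=\sqrt{f(d,\ell-\varepsilon e_j)\,a_k\adj B(d)^{-1}a_k}\to 0$, so $a_k\adj y(d,\ell-\varepsilon e_j)-\gamma_k(d,\ell-\varepsilon e_j)\to a_k\adj y(d,\ell)>u_k$; by continuity both conditions of Step~\ref{s: eps} hold for all sufficiently small $\varepsilon>0$, so such $\varepsilon$ exists. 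After Step~\ref{s: l_dec2} we therefore have $f(d,\ell)>0$, $\Lambda$ still certifies $\ell$, and $L_k:=a_k\adj y(d,\ell)-\gamma_k(d,\ell)>u_k$.

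Finally, Step~\ref{s: d_rescale} rescales $d$ so that $f(d,\ell)=1$, which leaves $y(d,\ell)$, $\gamma_k(d,\ell)$, and $L_k$ unchanged. We are now exactly in the hypotheses of Proposition~\ref{p: cert_const} with the index $k$, so the vectors $\hat\lambda_k,\tilde\lambda_k$ produced in Steps~\ref{s: cert1}--\ref{s: cert2} satisfy $A\tilde\lambda_k=-a_k$, $\tilde\lambda_k\ge 0$, and $-u\adj\tilde\lambda_k\ge L_k$. Since $L_k>u_k$, Corollary~\ref{heatwave} yields that $\bar\lambda_k=\tilde\lambda_k+e_k$ satisfies $A\bar\lambda_k=0$, $\bar\lambda_k\ge 0$, and $-u\adj\bar\lambda_k\ge L_k-u_k>0$; that is, $\bar\lambda_k$ solves $\alt$ and is the type-\1 certificate returned in Step~\ref{s: cert3}. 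The one genuinely delicate point I anticipate is the coordinate-perturbation identity for $f$ and its use in Step~\ref{s: eps}: $\varepsilon$ must be chosen small enough that $f$ becomes positive \emph{and} constraint $k$ stays strictly slab-violated, which hinges on the linear coefficient $d_j(u_j-a_j\adj y(d,\ell))$ being nonnegative — exactly why Step~\ref{s: j} insists on an index with $a_j\adj y(d,\ell)\le u_j$. Everything else is bookkeeping given that identity and the cited results.
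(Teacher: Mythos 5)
Your proof is correct and follows essentially the same route as the paper: establish that Steps \ref{s: beta}, \ref{s: j}, \ref{s: k}, and \ref{s: eps} are executable via the quadratic formula for $f(d,\ell-s e_i)$ (the paper cites Proposition \ref{p: ell_up}), use $\cp=\emptyset$ from the input for Step \ref{s: k}, and then conclude via Proposition \ref{p: cert_const} and Corollary \ref{heatwave}. The only cosmetic differences are in Step \ref{s: j}, where you argue directly from the sign of the terms of $(A^\top y-\ell)^\top D(A^\top y-u)\le 0$ at the center while the paper derives a contradiction ($f<0$) if every constraint were violated, and in Step \ref{s: eps}, where you use a soft continuity/limit argument in place of the paper's explicit formula for $h(\varepsilon)$ (which the paper also uses to note implementability via the quadratic formula).
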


\section{Updating the ellipsoid $E(d,\ell)$} \label{s: update} 
Let $d > 0$, let $\ell$ be a certified lower bound for $\poi$ with certificate $\Lambda$, and suppose that the ellipsoid $E(d,\ell)$ has positive volume. Also suppose that $d$ has been scaled so that $f(d,\ell) =1$. In this section, we discuss a procedure for updating the ellipsoid $E(d,\ell)$. We will assume that the center violates some constraint, i.e., the condition $a_j^{\top}y(d,\ell) > u_j$ holds for some $j \in [m]$. Otherwise, $y(d,\ell)$ is feasible, and 
so we would have 
 no reason to construct a new ellipsoid. In the same spirit, we will also assume that the condition $a_j^{\top} y(d,\ell) - u_j \leq \gamma_j(d,\ell)$ holds because if it does not, then we have a type-\3 certificate of infeasibility and can construct a type-\1 certificate of infeasibility (see Corollary \ref{heatwave}), and 
 again we would
  have no reason to construct a new ellipsoid. 

We update the ellipsoid $E(d,\ell)$ by updating its parameters $d$ and $\ell$, and it will be convenient to write the update procedure in five elementary 
steps. At the end of this section, we will provide some motivation for these steps.

\floatname{algorithm}{Procedure}
\begin{algorithm}[H] 
\caption{Updating ellipsoid $E(d,\ell)$ by updating its parameters $d$ and $\ell$}   \label{a: up_ellipse} 
\begin{algorithmic}[1] 
\INPUT{$d >0$ and certified lower bound $\ell$ with certificate matrix $\Lambda$ satisfying $f(d,\ell) = 1$, and $j \in [m]$ such that $0<a_j^{\top} y(d,\ell)-u_j \leq \gamma_j(d,\ell)$.}
\medskip
\State $\hat{\ell} \leftarrow \ell - \frac{2(t_j(d,\ell)-v_j(\ell))}{d_j \gamma_j(d,\ell)^2} e_j.$ \label{upelps1}
\State Compute $y(d,\hat{\ell})$.  If $A^{\top} y(d,\hat{\ell}) \leq u$, then Return $y(d,\hat{\ell})$ as a solution to $\poi$ and Stop. \label{upelps2}  
\State Compute $f(d,\hat{\ell})$.  If $f(d,\hat{\ell}) \le 0$, then call Procedure \ref{a: cert_for_f_leq_0} and Stop. \label{upelps3}
\State $d \leftarrow \frac{1}{f(d, \hat{\ell})} d.$ \label{upelps4}  
\State $\tilde{\ell} \leftarrow \hat{\ell} + \frac{2(2v_j(d, \hat{\ell}) - \gamma_j(d,\hat{\ell}))}{(m-1) d_j \gamma_j(d,\hat{\ell})^2 + 2} e_j.$ \label{upelps5} 
\State $\tilde{d} \leftarrow d + \frac{2}{m-1} \frac{1}{\gamma_j(d,\hat{\ell})^2} e_j.$ \label{upelps6} 
\State $\tilde{d} \leftarrow \frac{1}{f(\tilde{d},\tilde{\ell})} \tilde{d}.$ \label{upelps7} 
\State Return $\tilde{d}$ and $\tilde{\ell}$, and Stop. \label{upelps8}
\end{algorithmic}
\end{algorithm} 

Below we establish several properties of this procedure. Proofs of the results are in Appendix \ref{biden}.

In Step \ref{upelps1} of Procedure \ref{a: up_ellipse}, we update the $j$-th coordinate of $\ell$ to obtain 
\begin{equation} \label{e: update1} 
\ell^{(1)} := \ell - \frac{2(t_j(d,\ell)-v_j(\ell))}{d_j \gamma_j(d,\ell)^2} e_j \ .
\end{equation} 
Note that the numerator above is $a_j^{\top} y(d,\ell) - u_j > 0$, so the $j$th lower bound is decreased. The effect is that the center of the
new ellipsoid $E(d, \ell^{(1)}) $ will satisfy the $j$th inequality at equality, see (\ref{e: shift}) below. While this may hurt the volume reduction achieved, it is
helpful in our analysis of the infeasible case.  Lemma \ref{l: ss} establishes a few properties of this update.

\begin{Lem} \label{l: ss} 
Let $d > 0$ and let $\ell$ be a certified lower bound for $\poi$ with certificate matrix $\Lambda$, and suppose that $f(d,\ell) = 1$.  Also suppose that some $j \in [m]$ satisfies $0<a_j^{\top} y(d,\ell)-u_j \leq \gamma_j(d,\ell)$. Let $\ell^{(1)}$ be defined as in \eqref{e: update1}.  Then $\ell^{(1)}$ is a lower bound for $\poi$ with certificate matrix $\Lambda$, and the following hold:
\begin{align}
 a_j^{\top} y(d,\ell^{(1)}) & = u_j \ , \ \mbox{and}\label{e: shift} \\ 
 f(d, \ell^{(1)}) & =  1 - \left( \frac{a_j^{\top} y(d,\ell) - u_j}{\gamma_j(d,\ell)} \right)^2 \ < \ 1 \ . \label{e: shrink}
\end{align} \qed
\end{Lem}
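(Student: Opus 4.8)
The plan is to work directly from the defining identities of the parametrized ellipsoid in Section \ref{s: geometry}, treating everything as an explicit function of $\ell$ along the single coordinate direction $e_j$ while holding $d$ fixed. First I would note that since $\ell^{(1)}$ differs from $\ell$ only in its $j$-th coordinate, and that coordinate is \emph{decreased} (the numerator $t_j(d,\ell)-v_j(\ell) = a_j^\top y(d,\ell) - u_j > 0$ by hypothesis, and $d_j>0$, $\gamma_j(d,\ell)>0$), the inequality $-\Lambda^\top u \ge \ell^{(1)}$ follows immediately from $-\Lambda^\top u \ge \ell$; the other two conditions of $\lb$ (namely $A\Lambda = -A$ and $\Lambda \ge 0$) do not involve $\ell$ at all. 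Hence $\ell^{(1)}$ is a certified lower bound for $\poi$ with the same certificate matrix $\Lambda$.

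Next, for \eqref{e: shift} I would compute $a_j^\top y(d,\ell^{(1)})$. Writing $\ell^{(1)} = \ell - s\,e_j$ with $s := \tfrac{2(t_j(d,\ell)-v_j(\ell))}{d_j\gamma_j(d,\ell)^2}$, we have $r(\ell^{(1)}) = r(\ell) - \tfrac{s}{2}e_j$, so $y(d,\ell^{(1)}) = B(d)^{-1}ADr(\ell^{(1)}) = y(d,\ell) - \tfrac{s}{2}B(d)^{-1}ADe_j = y(d,\ell) - \tfrac{s}{2}d_j B(d)^{-1}a_j$. Therefore $a_j^\top y(d,\ell^{(1)}) = a_j^\top y(d,\ell) - \tfrac{s}{2}d_j\, a_j^\top B(d)^{-1}a_j$. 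Now using the characterization \eqref{running}, $\gamma_j(d,\ell)^2 = f(d,\ell)\,a_j^\top B(d)^{-1}a_j = a_j^\top B(d)^{-1}a_j$ since $f(d,\ell)=1$; substituting $s$ gives $a_j^\top y(d,\ell^{(1)}) = a_j^\top y(d,\ell) - \tfrac{d_j\,\gamma_j(d,\ell)^2}{\,d_j\,\gamma_j(d,\ell)^2}(t_j(d,\ell)-v_j(\ell)) = a_j^\top y(d,\ell) - (a_j^\top y(d,\ell)-u_j) = u_j$, which is \eqref{e: shift}.

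Finally, for \eqref{e: shrink} I would use the representation $f(d,\ell) = v(\ell)^\top D v(\ell) - t(d,\ell)^\top D t(d,\ell)$. The cleanest route is probably to apply \eqref{e: shift}: at $\ell^{(1)}$ the $j$-th constraint is tight at the center, i.e. $t_j(d,\ell^{(1)}) = a_j^\top y(d,\ell^{(1)}) - r_j(\ell^{(1)}) = u_j - \tfrac12(u_j+\ell^{(1)}_j) = \tfrac12(u_j - \ell^{(1)}_j) = v_j(\ell^{(1)})$, so the $j$-th terms of $v(\ell^{(1)})^\top D v(\ell^{(1)})$ and $t(d,\ell^{(1)})^\top D t(d,\ell^{(1)})$ cancel. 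I then expand the remaining difference by tracking how $v(\cdot)$, $t(d,\cdot)$ change under the shift $\ell \mapsto \ell - s e_j$: only the $j$-th coordinate of $v$ changes (by $+\tfrac{s}{2}$) and $t(d,\ell^{(1)}) = t(d,\ell) - \tfrac{s}{2}d_j B(d)^{-1}a_j \cdot$ (adjusted for the $-r$ term), so $f(d,\ell^{(1)}) - f(d,\ell)$ reduces to a quadratic in $s$ whose coefficients are expressible via $d_j$, $a_j^\top B(d)^{-1}a_j = \gamma_j(d,\ell)^2$, and $t_j(d,\ell)-v_j(\ell) = a_j^\top y(d,\ell)-u_j$; plugging in the value of $s$ collapses this to $-\big((a_j^\top y(d,\ell)-u_j)/\gamma_j(d,\ell)\big)^2$. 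The strict inequality $f(d,\ell^{(1)})<1$ is then immediate since $a_j^\top y(d,\ell)-u_j>0$.

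The main obstacle I anticipate is the bookkeeping in the last step: getting the quadratic-in-$s$ expansion of $f(d,\ell - s e_j)$ right, in particular correctly handling the cross term between the change in $v_j$ and the change in $t(d,\cdot)$ (which itself has both a "direct" $-r_j$ piece and an "indirect" piece through the shift of the center $y$). Using \eqref{e: shift} to force the $j$-th terms to cancel, as above, should substantially reduce the risk of sign errors, but the algebra relating $a_j^\top B(d)^{-1}a_j$ to $\gamma_j(d,\ell)^2$ via the $f(d,\ell)=1$ normalization has to be invoked carefully at exactly the right moment.
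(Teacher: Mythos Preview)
Your proposal is correct and follows essentially the same approach as the paper: the paper packages the one-coordinate update formulas for $y(d,\cdot)$ and $f(d,\cdot)$ into a separate proposition (Proposition~\ref{p: ell_up}) and then substitutes $\beta = \beta^{(1)}$, whereas you derive those same formulas inline, but the underlying computation---using $r(\ell^{(1)}) = r(\ell) - \tfrac{s}{2}e_j$, the identity $a_j^\top B(d)^{-1}a_j = \gamma_j(d,\ell)^2$ under $f(d,\ell)=1$, and the quadratic-in-$s$ expansion of $f$---is identical. Your observation that $t_j(d,\ell^{(1)}) = v_j(\ell^{(1)})$ after \eqref{e: shift} is a nice sanity check but is not needed for the argument; the direct substitution of $s$ into the quadratic already collapses cleanly to $1 - \big((a_j^\top y(d,\ell)-u_j)/\gamma_j(d,\ell)\big)^2$.
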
 

From Lemma \ref{l: ss} and the suppositions that $0 < a_j^{\top}y(d,\ell) - u_j \leq \gamma_j(d,\ell)$, it holds that $f(d,\ell^{(1)}) \geq 0$. If $f(d,\ell^{(1)})=0$, then either ${\cal P} = \{y(d, \ell)\}$ in which case in Step \ref{upelps2} we return $y(d,\ell^{(1)})$ as a solution to $\poi$, or $A^{\top}y(d,\ell) \not \le u$ in which case in Step \ref{upelps3} we call Procedure \ref{a: cert_for_f_leq_0} to construct a type-\1 certificate of infeasibility, as discussed and proved in Proposition \ref{p: alg_correct_f_leq_0} of Section \ref{s: updating_certificates}. 

In Steps \ref{upelps4}-\ref{upelps7}, we compute updates 
\begin{align}
& d^{(1)} =  \frac{1}{f(d,\ell^{(1)})} d \ , \label{e: update2} \\  
& \ell^{(2)} = \ell^{(1)} + \frac{2(2v_j(\ell^{(1)}) - \gamma_j(d^{(1)}, \ell^{(1)}))}{(m-1) d_j^{(1)} \gamma_j(d^{(1)},\ell^{(1)})^2 + 2} e_j \ , \label{e: update3} \\
& d^{(2)} = d^{(1)} + \frac{2}{m-1} \frac{1}{\gamma_j(d^{(1)} ,\ell^{(1)})^2} e_j \ . \label{e: update4} \\
& d^{(3)} = \frac{1}{f(d^{(2)},\ell^{(2)})} d^{(2)} \ , \label{e: update5} 
\end{align} 
Lemma \ref{l: construct} establishes a few properties of these updates: 

\begin{Lem} \label{l: construct} 
Let $d > 0$, and let $\ell$ be a certified lower bound for $\poi$ with certificate matrix $\Lambda$, and suppose that $f(d,\ell) = 1$. Also suppose that some $j \in [m]$ satisfies $0<a_j^{\top} y(d,\ell)-u_j \leq \gamma_j(d,\ell)$, and suppose in addition that $\lambda_j$ is a certificate for the lower bound $L_j:= a_j^{\top} y(d,\ell) - \gamma_j(d,\ell)$.  Let $\ell^{(1)}$ be defined as in (\ref{e: update1}), and suppose that $f(d,\ell^{(1)}) > 0$. Let $d^{(1)}$, $\ell^{(2)}$, $d^{(2)}$, and $d^{(3)}$ be defined as in (\ref{e: update2}), (\ref{e: update3}), (\ref{e: update4}), and (\ref{e: update5}),  respectively.  Then 
\begin{enumerate}
\item[(a)]
 $\ell_j^{(2)} \leq \max \left \{ \ell_j, L_j \right \}$, 
and hence $\ell^{(2)}$ is a certified lower bound for $\poi$ with certificate matrix $\Lambda$, 
\item[(b)]
$\gamma_j(d^{(1)},\ell^{(1)}) > 0$,
and hence (\ref{e: update3}) and (\ref{e: update4}) are well-defined, and 
\item[(c)]
 it holds that $ \ \displaystyle d^{(3)} = \displaystyle\frac{m^2-1}{m^2} \left( d^{(1)} + \frac{2}{m-1} \displaystyle\frac{1}{\gamma_j(d^{(1)},\ell^{(1)})^2} e_j \right)$. \qed
\end{enumerate}
\end{Lem}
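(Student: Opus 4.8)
\textbf{Proof proposal for Lemma \ref{l: construct}.}

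The plan is to verify the three claims in order, treating each as a self-contained computation that combines the defining formulae \eqref{e: update1}--\eqref{e: update5} with the basic identities for $y(\cdot,\cdot)$, $t(\cdot,\cdot)$, $f(\cdot,\cdot)$, and $\gamma_j(\cdot,\cdot)$ from Section \ref{s: geometry}, together with Lemma \ref{l: ss}. Throughout, I would exploit the fact that all the updates modify only the $j$-th coordinate of $d$ and $\ell$; since $B(d) = ADA^{\top}$, a rank-one update of $d$ in the $j$-th slot produces a rank-one update of $B(d)$, so Sherman--Morrison is the natural tool to track how $B(d)^{-1}$, $y$, $t$, $\gamma_j$, and $f$ change. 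A useful preliminary reduction is to record, using \eqref{e: shift}, that the center of $E(d,\ell^{(1)})$ satisfies $a_j^{\top} y(d,\ell^{(1)}) = u_j$, equivalently $t_j(d,\ell^{(1)}) = v_j(\ell^{(1)})$; this identity will simplify the later expressions substantially. I would also abbreviate $g := \gamma_j(d,\ell^{(1)})$ (note by positive scaling invariance of $\gamma_j$ this equals $\gamma_j(d^{(1)},\ell^{(1)})$ up to the rescaling in \eqref{e: update2}, so I'd be careful about which $d$ appears and use the scaling relations in Remark \ref{r: scaling_remark} and the invariance properties listed after \eqref{reps2}).

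For part (a), the goal is $\ell_j^{(2)} \le \max\{\ell_j, L_j\}$. From \eqref{e: update3}, $\ell_j^{(2)} = \ell_j^{(1)} + \delta$ where $\delta = \frac{2(2v_j(\ell^{(1)}) - \gamma_j(d^{(1)},\ell^{(1)}))}{(m-1)d_j^{(1)}\gamma_j(d^{(1)},\ell^{(1)})^2 + 2}$. I would split into the cases according to the sign of $\delta$, i.e.\ according to whether $2v_j(\ell^{(1)}) = u_j - \ell_j^{(1)}$ is at least or at most $\gamma_j(d^{(1)},\ell^{(1)})$. If $\delta \le 0$ then $\ell_j^{(2)} \le \ell_j^{(1)} \le \ell_j$ (the latter because Step \ref{upelps1} only decreases the $j$-th bound), so we're done with the left term of the max. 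If $\delta > 0$, I would bound $\delta$ from above by $2v_j(\ell^{(1)}) - \gamma_j(d^{(1)},\ell^{(1)}) = (u_j - \ell_j^{(1)}) - \gamma_j(d^{(1)},\ell^{(1)})$ (since the denominator exceeds $2$), giving $\ell_j^{(2)} \le u_j - \gamma_j(d^{(1)},\ell^{(1)})$. Then I'd identify $u_j - \gamma_j(d^{(1)},\ell^{(1)})$ with (or bound it by) $L_j = a_j^{\top} y(d,\ell) - \gamma_j(d,\ell)$ using \eqref{e: shift} (so $a_j^{\top} y$ at the shifted center is $u_j$, while the original center's value $a_j^{\top} y(d,\ell) \le u_j + \gamma_j(d,\ell)$ by the incoming hypothesis) plus monotonicity of the slab radius under the lower-bound shift; this is the delicate estimate. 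The ``hence'' clause follows from $\lb$: decreasing or leaving fixed each component of $\ell$ keeps $\Lambda$ a valid certificate, and since $L_j$ has certificate $\lambda_j$ by hypothesis while the other components are unchanged or lowered, $\ell^{(2)}$ is certified by $\Lambda$.

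For part (b), $\gamma_j(d^{(1)},\ell^{(1)}) > 0$: by \eqref{running}, $\gamma_j(d^{(1)},\ell^{(1)})^2 = f(d^{(1)},\ell^{(1)}) \, a_j^{\top}(AD^{(1)}A^{\top})^{-1}a_j$. The factor $a_j^{\top}(AD^{(1)}A^{\top})^{-1}a_j$ is strictly positive because $AD^{(1)}A^{\top}$ is positive definite (Assumption \ref{assu1}, property 1 of $E(d,\ell)$) and $a_j \neq 0$ (unit norm); and $f(d^{(1)},\ell^{(1)}) > 0$ is exactly the standing hypothesis (after the rescaling \eqref{e: update2}, $f(d^{(1)},\ell^{(1)}) = 1 > 0$). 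Hence the denominators in \eqref{e: update3}--\eqref{e: update4} are nonzero and these updates are well-defined. For part (c), the identity $d^{(3)} = \frac{m^2-1}{m^2}\bigl(d^{(1)} + \frac{2}{m-1}\frac{1}{\gamma_j(d^{(1)},\ell^{(1)})^2}e_j\bigr)$ amounts to showing $f(d^{(2)},\ell^{(2)}) = \frac{m^2}{m^2-1}$, since $d^{(2)} = d^{(1)} + \frac{2}{m-1}\frac{1}{\gamma_j(d^{(1)},\ell^{(1)})^2}e_j$ by \eqref{e: update4} and $d^{(3)} = d^{(2)}/f(d^{(2)},\ell^{(2)})$ by \eqref{e: update5}. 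So the real content is the value computation $f(d^{(2)},\ell^{(2)}) = \frac{m^2}{m^2-1}$. I would obtain this by applying the definition $f = v^{\top}Dv - t^{\top}Dt$ at $(d^{(2)},\ell^{(2)})$, using Sherman--Morrison to express $B(d^{(2)})^{-1} = (AD^{(1)}A^{\top} + \alpha a_j a_j^{\top})^{-1}$ with $\alpha = \frac{2}{(m-1)\gamma_j(d^{(1)},\ell^{(1)})^2}$, and plugging in the shifted lower bound $\ell^{(2)}$, repeatedly using $t_j(d^{(1)},\ell^{(1)}) = v_j(\ell^{(1)})$ and $\gamma_j(d^{(1)},\ell^{(1)})^2 = a_j^{\top}B(d^{(1)})^{-1}a_j$ (recall $f(d^{(1)},\ell^{(1)})=1$) to collapse the algebra; the scalars should telescope to $\frac{m^2}{m^2-1}$.

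The main obstacle I anticipate is part (c): it is a genuine (if mechanical) quadratic-form computation requiring careful bookkeeping of the Sherman--Morrison correction terms and of how the $\ell^{(2)}$ shift interacts with the center $y(d^{(2)},\ell^{(2)})$ — this is exactly the kind of identity that underlies the ``$\frac{m^2}{m^2-1}$'' volume-type ratios in ellipsoid methods, so I would expect it to work out cleanly but to be the longest and most error-prone step. A secondary subtlety is the monotonicity/identification argument in part (a) tying $u_j - \gamma_j(d^{(1)},\ell^{(1)})$ back to $L_j = a_j^{\top}y(d,\ell) - \gamma_j(d,\ell)$: one must be careful that lowering $\ell_j$ from $\ell$ to $\ell^{(1)}$ does not increase the relevant slab radius beyond what the bound allows, and here the incoming hypothesis $a_j^{\top}y(d,\ell) - u_j \le \gamma_j(d,\ell)$ and \eqref{e: shrink} (which says $f$, and hence via \eqref{running} every $\gamma_k$, does not increase under the Step \ref{upelps1} shift) are the facts that make it go through.
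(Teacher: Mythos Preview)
Your proposal is essentially correct and follows the same route as the paper. Parts (b) and (c) are spot on: the paper also reduces (c) to showing $f(d^{(2)},\ell^{(2)}) = \tfrac{m^2}{m^2-1}$, and carries this out by the decomposition $f(d^{(2)},\ell^{(2)}) = 1 + [f(d^{(2)},\ell^{(1)}) - 1] + [f(d^{(2)},\ell^{(2)}) - f(d^{(2)},\ell^{(1)})]$, evaluating each bracket via the rank-one update formulae (their Propositions \ref{p: d_up} and \ref{p: ell_up}, which package the Sherman--Morrison computations you propose to do inline). Your direct approach would reach the same endpoint; the paper's two-step split just keeps the bookkeeping cleaner and makes the role of the identity $t_j(d^{(1)},\ell^{(1)}) = v_j(\ell^{(1)})$ more transparent.

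For part (a), your case split and the bound $\ell_j^{(2)} \le u_j - \gamma_j(d^{(1)},\ell^{(1)})$ in the $\delta>0$ case match the paper exactly. But your intuition about the ``delicate estimate'' is slightly backwards: the danger is not that the slab radius \emph{increases} when you lower $\ell_j$ --- it decreases, as you note from \eqref{e: shrink} and \eqref{running} --- but rather that it might decrease \emph{too much}, making $u_j - \gamma_j(d^{(1)},\ell^{(1)})$ exceed $L_j = a_j^{\top}y(d,\ell) - \gamma_j(d,\ell)$. The precise control the paper uses is the elementary inequality $\sqrt{1-x^2} \ge 1-x$ for $x \in [0,1]$, applied with $x = (a_j^{\top}y(d,\ell)-u_j)/\gamma_j(d,\ell)$: this gives $\gamma_j(d,\ell^{(1)}) = \sqrt{f(d,\ell^{(1)})}\,\gamma_j(d,\ell) \ge \gamma_j(d,\ell) - (a_j^{\top}y(d,\ell)-u_j)$, i.e.\ the slab radius shrinks by at most the amount the center moved, which is exactly what you need. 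Your sketch points to the right ingredients but would benefit from naming this inequality explicitly.
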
  

\begin{Rem} \label{r: diff1} 
It is possible to show that the ellipsoid $E(d^{(3)}, \ell^{(2)})$ contains the half ellipsoid described by the intersection of the ellipsoid $E(d^{(1)},\ell^{(1)})$ and the half-space $\{ x \in \mathbb{R}^n : a^{\top}_j  x \leq u_j \} = \{ x \in \mathbb{R}^n : a^{\top}_j  x \leq a^{\top}_j  y(d^{(1)},\ell^{(1)}) \}$. The ellipsoid $E(d^{(3)}, \ell^{(2)})$ is not the minimum volume ellipsoid containing the half ellipsoid, but it would be if we substituted $n$ for $m$ in updates 
(\ref{e: update3}) 
and (\ref{e: update4}). We use $m$ instead of $n$ in order to establish a convergence guarantee for our algorithm in the setting in which $\poi$ is infeasible; we will clarify and elaborate on this idea in Section \ref{s: infeasible}. 
\end{Rem} 

So far, we have separately studied the first step and the last four steps of the update procedure. Theorem \ref{t: ssc} below provides a more unified perspective on the update procedure:  

\begin{Th} \label{t: ssc} 
Let $d > 0$ and $\ell$ be a certified lower bound for $\poi$ with certificate matrix $\Lambda$, and suppose that $f(d,\ell) = 1$. Also let $j \in [m]$ be given and suppose that $0<a_j^{\top} y(d,\ell)-u_j \leq \gamma_j(d,\ell)$. Let $\ell^{(1)}$ be defined as in (\ref{e: update1}), and suppose that $f(d,\ell^{(1)}) > 0$.  Let $d^{(1)}$, $\ell^{(2)}$, $d^{(2)}$, and $d^{(3)}$ be defined as in (\ref{e: update2}), (\ref{e: update3}), (\ref{e: update4}), and (\ref{e: update5}) respectively. Then 
$$d^{(3)} = \alpha(d,\ell) \left( d + \frac{2}{m-1} \frac{1}{\gamma_j(d,\ell)^2} e_j \right) \ , $$
 where $\alpha(d,\ell) > \frac{m^2-1}{m^2}$. \qed
\end{Th}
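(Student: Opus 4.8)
The plan is to derive the claim directly from Lemma~\ref{l: construct}(c), which already gives
$$d^{(3)} = \frac{m^2-1}{m^2}\left( d^{(1)} + \frac{2}{m-1}\,\frac{1}{\gamma_j(d^{(1)},\ell^{(1)})^2}\,e_j \right),$$
and then to rewrite the superscript-$(1)$ quantities in terms of $d$ and $\gamma_j(d,\ell)$. First I would abbreviate $c := f(d,\ell^{(1)})$; by \eqref{e: shrink} of Lemma~\ref{l: ss} together with the hypotheses $a_j^{\top} y(d,\ell) - u_j > 0$ and $f(d,\ell^{(1)}) > 0$, we have $c \in (0,1)$, and by \eqref{e: update2} we have $d^{(1)} = \tfrac{1}{c}\,d$.

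The one computation that needs care is the identity $\gamma_j(d^{(1)},\ell^{(1)})^2 = c\,\gamma_j(d,\ell)^2$. I would obtain it in two moves. Since $\gamma_j$ is invariant under positive scaling of its first argument and $d^{(1)} = \tfrac1c d$, we get $\gamma_j(d^{(1)},\ell^{(1)}) = \gamma_j(d,\ell^{(1)})$. Then, applying the closed form \eqref{running} at $(d,\ell^{(1)})$ and at $(d,\ell)$ and using that the factor $a_j^{\top}(ADA^{\top})^{-1}a_j$ does not depend on $\ell$, together with $f(d,\ell)=1$, gives $\gamma_j(d,\ell^{(1)})^2 = f(d,\ell^{(1)})\,\gamma_j(d,\ell)^2 = c\,\gamma_j(d,\ell)^2$. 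As a byproduct this re-derives $\gamma_j(d^{(1)},\ell^{(1)}) > 0$, matching Lemma~\ref{l: construct}(b).

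Substituting $d^{(1)} = \tfrac1c d$ and $\gamma_j(d^{(1)},\ell^{(1)})^2 = c\,\gamma_j(d,\ell)^2$ into the displayed formula for $d^{(3)}$ and factoring $\tfrac1c$ out of the parenthesis yields
$$d^{(3)} = \frac{m^2-1}{m^2\,c}\left( d + \frac{2}{m-1}\,\frac{1}{\gamma_j(d,\ell)^2}\,e_j \right),$$
so $\alpha(d,\ell) = \tfrac{m^2-1}{m^2\,f(d,\ell^{(1)})}$, which is well-defined since $c > 0$. Finally, because $c = f(d,\ell^{(1)}) < 1$ we conclude $\alpha(d,\ell) > \tfrac{m^2-1}{m^2}$, as claimed.

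I do not expect a genuine obstacle: the theorem is essentially Lemma~\ref{l: construct}(c) after bookkeeping. The only place to be careful is keeping the three slab radii $\gamma_j(d,\ell)$, $\gamma_j(d,\ell^{(1)})$, and $\gamma_j(d^{(1)},\ell^{(1)})$ straight — collapsing the last two via invariance under $d$-scaling, and relating the first two through the factor $f(d,\ell^{(1)})$ by \eqref{running} — and in noting that the hypotheses of Lemma~\ref{l: construct} are inherited from those of the theorem.
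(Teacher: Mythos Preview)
Your proposal is correct and follows essentially the same approach as the paper's proof: define $\alpha(d,\ell) = \tfrac{m^2-1}{m^2}\cdot\tfrac{1}{f(d,\ell^{(1)})}$, use Lemma~\ref{l: ss} to get $0 < f(d,\ell^{(1)}) < 1$, establish the key identity $\gamma_j(d^{(1)},\ell^{(1)})^2 = f(d,\ell^{(1)})\,\gamma_j(d,\ell)^2$ via scaling invariance and \eqref{running}, and then invoke Lemma~\ref{l: construct}(c) together with \eqref{e: update2}. Your write-up is simply a more expanded version of the paper's terse argument.
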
 

\begin{Rem}\label{r: motiv}
For those familiar with the ellipsoid algorithm and the Burrell-Todd representation, we 
now give some motivation for the steps in our update procedure. As we will describe in our convergence analysis, our aim is to guarantee
some volume reduction, but at the same time achieve progress in the case of infeasibility. For this, we need to maintain the Burrell-Todd
representation, but it appears we cannot push for aggressive volume reduction. In the original ellipsoid algorithm, the quadratic inequality
defining the new ellipsoid is the sum of that defining the old ellipsoid, say
\[
(x - y)^{\top} B^{-1} (x - y) \leq 1, 
\]
and the multiple
\[
\frac{2}{(n-1)a_j^{\top}Ba_j}
\]
of the quadratic inequality
\[
(a_j^{\top}x - a_j^{\top}y) (a_j^{\top}x - a_j^{\top}y - (a_j^{\top}Ba_j)^{1/2}) \leq 0
\]
(see \cite{tomv}).
However, since the old inequality was not in Burrell-Todd form, nor is the new one. In deep cut and two-sided cut variants, the old ellipsoid
is given by a Burrell-Todd representation. After choosing the constraint $j$, the lower bound $\ell_j$ is possibly updated, and if so, the old
ellipsoid is also updated before again taking a combination of quadratic inequalities to define the new ellipsoid. In our version, this cannot be done
without jeopardizing the analysis. Therefore, the first step is decreasing the lower bound $\ell_j$ to $\ell_j^{(1)}$ so that the new center lies on the
constraint $a_j^{\top} x = u_j$. Then the quadratic inequality defining the new ellipsoid is the sum of that defining the intermediate ellipsoid,
\[
(x - y(d^{(1)},\ell^{(1)}))^{\top} (A D^{(1)} A^{\top}) (x - y(d^{(1)},\ell^{(1)}) \leq 1 \ ,
\]
and the multiple
\[
\frac{2}{(m-1)\gamma_j(d^{(1)},\ell^{(1)})^2}
\]
of the quadratic inequality
\[
(a_j^{\top} x - u_j) (a_j^{\top} x - L_j^{(1)}) \leq 0 \ ,
\]
where $L_j^{(1)} := a_j^\top y(d^{(1)},\ell^{(1)}) - \gamma_j(d^{(1)},\ell^{(1)})$.
Note that, due to the common factor $(a_j^{\top}x - u_j)$, 
the terms $d_j^{(1)} (a_j^{\top} x - u_j) (a_j^{\top} x - \ell_j^{(1)}) $ from the first inequality and
\[
\frac{2}{(m-1)\gamma_j(d^{(1)},\ell^{(1)})^2} (a_j^{\top} x - u_j) (a_j^{\top} x - L_j^{(1)})
\]
can be combined, and this leads to the updates for $\ell^{(2)}$ and $d^{(2)}$ and the new Burrell-Todd representation.
\end{Rem}




\section{Oblivious Ellipsoid Algorithm} \label{s: alg} 
A formal description of our oblivious ellipsoid algorithm (OEA) is presented in Algorithm \ref{alg: 1}; the description is essentially a more formal and detailed version of the schematic version of OEA presented in Algorithm \ref{a: schem}.  
\floatname{algorithm}{Algorithm}
\begin{algorithm}[H] 
\caption{Oblivious Ellipsoid Algorithm (OEA)}   \label{alg: 1} 
\begin{algorithmic}[1] 
\INPUT{data $(A,u)$, certified lower bound $\ell$ for $\poi$ with certificate matrix $\Lambda$, and $d >0$.}
\medskip  
\State Compute $y(d,\ell)$.  If $A^{\top} y(d,\ell) \leq u$, then Return $y(d,\ell)$ as a solution of $\poi$ and Stop. \label{step: feasible} 
\State Compute $f(d,\ell)$.  If $f(d,\ell) \le 0$, then call Procedure \ref{a: cert_for_f_leq_0} and Stop.\label{step: callprocedure}
\State $d \leftarrow \frac{1}{f(d,\ell)} d.$ \label{step: scale1} 
\State Compute most violated constraint:  $j \leftarrow \argmax_{i \in [m]} a_i^{\top} y(d,\ell) - u_i$. \label{step: max_viol}
\State If $\ell_j < L_j := a_j^{\top}y(d,\ell) - \gamma_j(d,\ell)$, then update the certificate for constraint $j$: \label{rain}
\State  \hspace{\algorithmicindent} $\hat{\lambda}_j := \gamma_j(d,\ell) D t(d,\ell) - DA^{\top} B(d)^{-1} a_j.$ \label{raining}
\State  \hspace{\algorithmicindent} $\tilde{\lambda}_j := \Lambda \hat{\lambda}_j^- + \hat{\lambda}_j^+ .$ \label{expensive} 
\State  \hspace{\algorithmicindent} $\lambda_j \leftarrow \tilde\lambda_j. $ \label{morerain}
\State If $L_j > u_j$, then Return type-\1 certificate of infeasibility $\bar\lambda_j := \lambda_j + e_j$ and Stop. \label{step: infeasible} 
\State Update $E(d,\ell)$ by updating ellipsoid parameters $d$ and $\ell$:  \label{step: up_dl} 
\State \hspace{\algorithmicindent} Call Procedure \ref{a: up_ellipse} with input $d$, $\ell$, $\Lambda$ to obtain output $\tilde d, \tilde \ell$.  \label{ellup} 
\State Re-set $(d,\ell) \leftarrow (\tilde d, \tilde \ell)$ and Goto Step \ref{step: feasible}. \label{step: repeat} 
\end{algorithmic}
\end{algorithm} 

Let us briefly consider the steps of Algorithm \ref{alg: 1} that are different from the steps of the schematic Algorithm \ref{a: schem}. In Step \ref{step: callprocedure} when $f(d,\ell) \le 0$, we call Procedure \ref{a: cert_for_f_leq_0} (see Section \ref{s: updating_certificates}) to construct and return a type-\1 certificate of infeasibility. In Steps \ref{rain}-\ref{morerain} we use Proposition \ref{p: cert_const} to update the certificate $\lambda_j$ when we can construct a new certificate that certifies a better lower bound (namely $L_j$). In Step \ref{step: infeasible} when $L_j > u_j$, we return $\bar{\lambda}_j$, which is a type-\1 certificate of infeasibility by Corollary \ref{heatwave}. Lastly, in Step \ref{ellup}, we use Procedure \ref{a: up_ellipse} (of Section \ref{s: update}) to update the ellipsoid by updating its parameters. 

\begin{Rem} \label{notsofast} 
The operations complexity of an iteration of OEA (with appropriate rank-$1$ updates) is $O(m^2)$ because the most expensive computation that can occur in an iteration is computing $\Lambda \hat{\lambda}_j^{-}$ in Step \ref{expensive}. 
\end{Rem} 

\begin{Rem} \label{r: once} 
It turns out that the condition $f(d,\ell) \leq 0$ in Step \ref{step: callprocedure} can only be satisfied at Step \ref{step: callprocedure} during the first iteration of the algorithm.  This is because at later iterations Procedure \ref{a: up_ellipse} in Step \ref{ellup} detects if this condition holds, calls Procedure 2, and then terminates. (And it is straightforward to check that if Procedure \ref{a: up_ellipse} completes a full iteration with output $\tilde{d}$ and $\tilde{\ell}$, then $f(\tilde{d}, \tilde{\ell}) >0$.)  

In a similar spirit, Step \ref{step: scale1} only needs to be implemented during the first iteration of the algorithm because Procedure \ref{a: up_ellipse}, called in Step \ref{ellup}, returns parameters $\tilde{d}$ and $\tilde{\ell}$ that satisfy $f(\tilde{d},\tilde{\ell}) = 1$. 
\end{Rem}

\subsection{Computational Guarantees when $\poi$ is Infeasible} \label{s: infeasible} 
In this subsection we examine the computational complexity of Algorithm \ref{alg: 1} in the case when $\poi$ is infeasible.  We start with the following elementary proposition that bounds the slab radii in terms of the (normalized) components of $d$. 
(Proofs of the results of Section \ref{s: infeasible} appear in Appendix \ref{finnigan}.)

\begin{Prop} \label{l: 1} 
Let $d \in \mathbb{R}^m_{++}$ and $\ell \in \mathbb{R}^m$ such that $f(d,\ell) > 0$. For all $i \in [m]$ it holds
 that

$$\gamma_i(d,\ell) \le\left( \frac{d_i}{f(d,\ell)}  \right)^{-\frac{1}{2}} \ . $$ \qed
\end{Prop}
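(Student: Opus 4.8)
The plan is to use the variational characterization of the slab radius from \eqref{running}, namely
$\gamma_i(d,\ell) = \sqrt{f(d,\ell)\, a_i^\top (ADA^\top)^{-1} a_i}$, and to bound the quadratic form $a_i^\top (ADA^\top)^{-1} a_i$ from above by $1/d_i$. Once that bound is established, the claim follows by substituting into \eqref{running}:
$\gamma_i(d,\ell) = \sqrt{f(d,\ell)\, a_i^\top (ADA^\top)^{-1} a_i} \le \sqrt{f(d,\ell)/d_i} = (d_i/f(d,\ell))^{-1/2}$.

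So the real content is the linear-algebra inequality $a_i^\top (ADA^\top)^{-1} a_i \le 1/d_i$. First I would note that $ADA^\top = \sum_{k=1}^m d_k a_k a_k^\top \succeq d_i a_i a_i^\top$, since all the omitted terms $d_k a_k a_k^\top$ are positive semidefinite (using $d > 0$). For positive definite matrices, $M \succeq N \succeq 0$ does not in general give $M^{-1} \preceq N^{-1}$ when $N$ is singular, so I would instead argue directly. One clean way: for any vector $w$, by Cauchy–Schwarz in the inner product induced by $(ADA^\top)^{-1}$ — or more simply, observe that
$(a_i^\top z)^2 \le (a_i^\top (ADA^\top)^{-1} a_i)(z^\top (ADA^\top) z)$ for all $z$ (Cauchy–Schwarz with the positive definite form $ADA^\top$), with equality when $z$ is proportional to $(ADA^\top)^{-1} a_i$. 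Taking $z = (ADA^\top)^{-1} a_i$ gives
$(a_i^\top (ADA^\top)^{-1} a_i)^2 \le (a_i^\top (ADA^\top)^{-1} a_i)\cdot\big( a_i^\top (ADA^\top)^{-1}(ADA^\top)(ADA^\top)^{-1} a_i\big)$, which is a tautology; so instead I would directly bound the denominator: for $z = (ADA^\top)^{-1}a_i$ we have $z^\top (ADA^\top) z = \sum_k d_k (a_k^\top z)^2 \ge d_i (a_i^\top z)^2 = d_i (a_i^\top (ADA^\top)^{-1} a_i)^2$. Combining with $(a_i^\top z)^2 \le (a_i^\top (ADA^\top)^{-1} a_i)(z^\top (ADA^\top) z)$ — which reads $(a_i^\top (ADA^\top)^{-1}a_i)^2 \le (a_i^\top (ADA^\top)^{-1}a_i)(z^\top(ADA^\top)z)$ — one gets $a_i^\top(ADA^\top)^{-1}a_i \le z^\top(ADA^\top)z$, and then the previous display yields $z^\top(ADA^\top)z \ge d_i (z^\top(ADA^\top)z)^2 / (a_i^\top(ADA^\top)^{-1}a_i)$... this is getting circular, so let me state the cleanest route explicitly below.

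The cleanest argument: set $q := a_i^\top (ADA^\top)^{-1} a_i$ and $z := (ADA^\top)^{-1} a_i$, so $a_i^\top z = q$ and $z^\top (ADA^\top) z = a_i^\top (ADA^\top)^{-1} a_i = q$. Then
$q = z^\top (ADA^\top) z = \sum_{k=1}^m d_k (a_k^\top z)^2 \ge d_i (a_i^\top z)^2 = d_i q^2$,
where the inequality drops all terms $k \ne i$ (nonnegative since $d_k > 0$) and uses the unit-norm normalization only implicitly through $a_i^\top z = q$. Since $q \ge 0$ and in fact $q > 0$ (as $ADA^\top$ is positive definite by Assumption \ref{assu1} and $a_i \ne 0$), dividing by $q$ gives $1 \ge d_i q$, i.e. $q \le 1/d_i$. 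Substituting into \eqref{running} completes the proof. I expect the only mild obstacle is presenting this quadratic-form manipulation transparently without circularity; the key identities are $a_i^\top z = z^\top(ADA^\top)z = q$ for $z = (ADA^\top)^{-1}a_i$, and the rest is dropping nonnegative terms from $\sum_k d_k(a_k^\top z)^2$.
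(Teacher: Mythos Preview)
Your final argument is correct: with $z := (ADA^\top)^{-1} a_i$ and $q := a_i^\top z$, the identity $q = z^\top(ADA^\top)z = \sum_k d_k (a_k^\top z)^2 \ge d_i q^2$ gives $q \le 1/d_i$, and substitution into \eqref{running} finishes it. (The earlier paragraphs wander a bit before landing on this, but the landing is clean.)

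The paper's proof is slightly different and more direct: instead of invoking the closed-form \eqref{running} and bounding $a_i^\top(ADA^\top)^{-1}a_i$, it works straight from the ellipsoid description \eqref{tgif}. For any $x \in E(d,\ell)$,
\[
d_i\,(a_i^\top(x-y(d,\ell)))^2 \;\le\; (x-y(d,\ell))^\top ADA^\top (x-y(d,\ell)) \;\le\; f(d,\ell),
\]
since $ADA^\top \succeq d_i a_i a_i^\top$; hence $|a_i^\top x - a_i^\top y(d,\ell)| \le \sqrt{f(d,\ell)/d_i}$ for every $x$ in the ellipsoid, and the definition of the slab radius gives the bound. Both proofs rest on the same Loewner inequality $ADA^\top \succeq d_i a_i a_i^\top$; the paper applies it on the primal side (to points of the ellipsoid) and avoids the detour through the inverse matrix and the formula \eqref{running}, while your route is the dual-side version of the same fact. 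Either is fine; the paper's version is a touch shorter.
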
 

It then follows from Proposition \ref{r: cert_const} and Proposition \ref{l: 1} that we can construct a type-\1 certificate of infeasibility if the entries of the normalized iterate $\frac{1}{f(d,\ell)} d$ eventually become large enough so that they satisfy
$$\left( \frac{1}{f(d,\ell)} d_i \right)^{-\frac{1}{2}} < \tau(A,u) \ \ \mbox{for~all~}i \in [m] \ . $$

In order to prove that this condition will eventually hold, we first introduce the following potential function $\phi(d,\ell)$: $$\phi(d,\ell) := \prod_{i=1}^m \max \left \{  \left( \frac{1}{f(d,\ell)} d_i \right)^{-\frac{1}{2}} , \frac{m}{m+1} \tau(A,u) \right \} \ , $$
and we will show in this subsection that this potential function sufficiently decreases over the iterations in the case when $\poi$ is infeasible.   For notational convenience, define 
$$\mu_i(d,\ell) := \max \left \{  \left( \frac{1}{f(d,\ell)} d_i \right)^{-\frac{1}{2}} ,  \frac{m}{m+1} \tau(A,u) \right \} \ , $$
and therefore $\phi(d,\ell) = \prod_{i=1}^m \mu_i(d,\ell)$.  Note that $\phi(d,\ell)$ is bounded from below, namely $\phi(d,\ell) \geq \left( \frac{m}{m+1} \tau(A,u) \right)^m$.  Lemma \ref{t: main_t} below states that after updating $d$ and $\ell$ in Procedure \ref{a: up_ellipse}, $\phi(d,\ell)$ decreases by at least the multiplicative factor $e^{-\frac{1}{2(m+1)}}$ . 

\begin{Lem}[Potential function decrease] \label{t: main_t} 
Let $d > 0$ and $\ell \in \mathbb{R}^m$ satisfy $f(d,\ell) > 0$, and similarly let $\tilde{d} > 0$ and $\tilde{\ell} \in \mathbb{R}^m$ satisfy $f(\tilde{d},\tilde{\ell}) > 0$. Let $j \in [m]$ be given, and suppose that $d$, $\ell$, $\tilde d$, $\tilde \ell$ satisfy:  
$$\frac{1}{f(\tilde{d},\tilde{\ell})} \tilde{d} = \alpha \left(\frac{1}{f(d,\ell)} d+ \frac{2}{m-1} \frac{1}{\gamma_j(d,\ell)^2} e_j \right) \ , $$
for a scalar $\alpha \geq \frac{m^2-1}{m^2}$.  If $\left( \frac{d_j}{f(d,\ell)}  \right)^{-\frac{1}{2}} \geq \tau(A,u)$, then 
$$\phi(\tilde{d}, \tilde{\ell}) \leq e^{-\frac{1}{2(m+1)}} \phi(d,\ell) \ .   $$\qed
\end{Lem}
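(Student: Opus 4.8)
The plan is to analyze how each factor $\mu_i$ of the potential function changes under the update, and show the product decreases by the claimed factor. First I would write $c := 1/f(d,\ell)$ and $\tilde c := 1/f(\tilde d,\tilde\ell)$, so the hypothesis reads $\tilde c \tilde d = \alpha\,(c\,d + \tfrac{2}{m-1}\gamma_j(d,\ell)^{-2} e_j)$. For $i \neq j$, the $i$-th coordinate satisfies $\tilde c \tilde d_i = \alpha\, c\, d_i$, so $(\tilde c\tilde d_i)^{-1/2} = \alpha^{-1/2} (c d_i)^{-1/2}$. Since $\alpha \ge \frac{m^2-1}{m^2}$, we have $\alpha^{-1/2} \le \frac{m}{\sqrt{m^2-1}}$, so for $i\neq j$ each term $(\tilde c\tilde d_i)^{-1/2}$ is at most $\frac{m}{\sqrt{m^2-1}}$ times $(c d_i)^{-1/2}$; I need to check this inequality is also inherited by the truncated quantity $\mu_i$, which holds because $x \mapsto \max\{x, \kappa\}$ is monotone and subadditive-under-scaling in the sense that $\max\{\beta x,\kappa\} \le \beta\max\{x,\kappa\}$ when $\beta \ge 1$ — here $\beta = \alpha^{-1/2} \ge 1$ since $\alpha \le 1$... but wait, $\alpha$ could exceed $1$. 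I would handle this by noting the potential decrease only needs to be established when the update is genuine; more carefully, when $\alpha \ge 1$ the factor $\alpha^{-1/2}\le 1$ and then $\mu_i(\tilde d,\tilde\ell)\le \mu_i(d,\ell)$ trivially for $i\ne j$, while when $\frac{m^2-1}{m^2}\le \alpha\le 1$ the bound $\mu_i(\tilde d,\tilde\ell)\le \alpha^{-1/2}\mu_i(d,\ell)\le \frac{m}{\sqrt{m^2-1}}\mu_i(d,\ell)$ holds. In all cases $\mu_i(\tilde d,\tilde\ell)\le \frac{m}{\sqrt{m^2-1}}\,\mu_i(d,\ell)$ for $i\ne j$.

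Next I would handle the $j$-th coordinate, which is where the real gain comes from. Here $\tilde c\tilde d_j = \alpha(c d_j + \tfrac{2}{m-1}\gamma_j(d,\ell)^{-2})$. The hypothesis $(c d_j)^{-1/2} \ge \tau(A,u)$ means $c d_j \le 1/\tau(A,u)^2$, i.e. $c d_j$ is not too large; equivalently the $j$-th slab is still "active." Combined with Proposition \ref{l: 1}, which gives $\gamma_j(d,\ell)^2 \le 1/(c d_j) \cdot$ ... actually $\gamma_j(d,\ell) \le (c d_j)^{-1/2}$, so $\gamma_j(d,\ell)^{-2} \ge c d_j$. Plugging in: $\tilde c\tilde d_j \ge \alpha(c d_j + \tfrac{2}{m-1} c d_j) = \alpha c d_j \cdot \frac{m+1}{m-1} \ge \frac{m^2-1}{m^2}\cdot\frac{m+1}{m-1}\, c d_j = \frac{(m+1)^2}{m^2}\, c d_j$. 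Therefore $(\tilde c\tilde d_j)^{-1/2} \le \frac{m}{m+1}(c d_j)^{-1/2}$. Now I must argue $\mu_j(\tilde d,\tilde\ell) \le \frac{m}{m+1}\mu_j(d,\ell)$: since $(c d_j)^{-1/2}\ge \tau(A,u)$, we get $\mu_j(d,\ell) = (c d_j)^{-1/2}$ (the max is attained by the first argument, because $\tau \ge \frac{m}{m+1}\tau$), and $\mu_j(\tilde d,\tilde\ell) = \max\{(\tilde c\tilde d_j)^{-1/2}, \frac{m}{m+1}\tau(A,u)\} \le \max\{\frac{m}{m+1}(c d_j)^{-1/2}, \frac{m}{m+1}\tau(A,u)\} = \frac{m}{m+1}\max\{(c d_j)^{-1/2},\tau(A,u)\} = \frac{m}{m+1}(c d_j)^{-1/2} = \frac{m}{m+1}\mu_j(d,\ell)$, using again that $(c d_j)^{-1/2}\ge\tau(A,u)$.

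Finally I would multiply: $\phi(\tilde d,\tilde\ell) = \prod_i \mu_i(\tilde d,\tilde\ell) \le \left(\frac{m}{\sqrt{m^2-1}}\right)^{m-1}\cdot \frac{m}{m+1}\cdot \prod_i \mu_i(d,\ell) = \left(\frac{m^2}{m^2-1}\right)^{(m-1)/2}\cdot\frac{m}{m+1}\cdot\phi(d,\ell)$. It then remains to check the purely numerical inequality $\left(\frac{m^2}{m^2-1}\right)^{(m-1)/2}\cdot\frac{m}{m+1} \le e^{-1/(2(m+1))}$, equivalently $\frac{m-1}{2}\ln\frac{m^2}{m^2-1} + \ln\frac{m}{m+1} \le -\frac{1}{2(m+1)}$. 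Using $\ln\frac{m^2}{m^2-1} = \ln\frac{m}{m-1} + \ln\frac{m}{m+1}$ and standard logarithm estimates ($\ln(1+t)\le t$ and $\ln(1-t)\le -t-t^2/2$), I would bound the left side; the main obstacle is making this elementary estimate tight enough to land at $-\frac{1}{2(m+1)}$ rather than something weaker, since the constants are delicate. I expect this final numerical verification — rather than the structural argument about $\mu_i$ — to be the fussy part, likely requiring a careful second-order Taylor bound on $\ln(1 - 1/m^2)$ and on $\ln(1 - 1/(m+1))$, valid for all integers $m \ge 2$ (the case $m$ small may need to be checked directly).
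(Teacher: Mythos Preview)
Your proposal is correct and follows essentially the same route as the paper: bound $\mu_j(\tilde d,\tilde\ell)\le\frac{m}{m+1}\mu_j(d,\ell)$ using Proposition~\ref{l: 1} together with the hypothesis $(c d_j)^{-1/2}\ge\tau(A,u)$, bound $\mu_i(\tilde d,\tilde\ell)\le\bigl(\tfrac{m^2}{m^2-1}\bigr)^{1/2}\mu_i(d,\ell)$ for $i\ne j$, and multiply. Two small comments. First, your worry about $\alpha>1$ is handled more cleanly in the paper by doing the case split on which term realizes the maximum in $\mu_i(\tilde d,\tilde\ell)$ rather than on the size of $\alpha$; since $\bigl(\tfrac{m^2}{m^2-1}\bigr)^{1/2}\ge 1$, both cases go through uniformly without ever asking whether $\alpha\le 1$. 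Second, the final numerical inequality is not delicate at all: write
\[
\frac{m}{m+1}\Bigl(\frac{m^2}{m^2-1}\Bigr)^{(m-1)/2}
=\Bigl(1-\frac{1}{m+1}\Bigr)\Bigl(1+\frac{1}{m^2-1}\Bigr)^{(m-1)/2}
\le e^{-1/(m+1)}\cdot e^{(m-1)/(2(m^2-1))}=e^{-1/(2(m+1))},
\]
using only $1+x\le e^x$ twice; no second-order Taylor bounds or small-$m$ checks are needed.
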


With Lemma \ref{t: main_t} in hand, we now state and prove our main computational guarantee for Algorithm \ref{alg: 1} in the case when $\poi$ is infeasible.

\begin{Th} \label{t: infeas_guar} 
Let $\ell \in \mathbb{R}^m$ be certified lower bounds for $\poi$ with certificate matrix $\Lambda$.  Let $d:=e \in \mathbb{R}^m$.  If $\poi$ is infeasible, Algorithm \ref{alg: 1} with input $A$, $u$, $\ell$, $\Lambda$, and $d$ will stop and return a type-\1 certificate of infeasibility in at most $$\left\lfloor 2m(m+1) \ln \left( \frac{m+1}{2m} \frac{\lVert u - \ell \rVert}{\tau(A,u)} \right) \right\rfloor$$ iterations. \qed
\end{Th}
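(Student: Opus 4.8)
The plan is to combine the per-iteration potential function decrease from Lemma~\ref{t: main_t} with the lower-bound certificate construction in Proposition~\ref{r: cert_const} (via Proposition~\ref{l: 1}), and to bound the number of iterations by the ratio of the initial potential to its lower bound.

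\medskip

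\noindent\textbf{Step 1: Reduce termination to a slab-radius condition.}  First I would argue that if at some iteration we have $d$, $\ell$ with $f(d,\ell)>0$ and $\gamma_i(d,\ell) < \tau(A,u)$ for the index $i$ achieving $\argmax_{h}(a_h\adj y(d,\ell)-u_h)$, then by Proposition~\ref{r: cert_const} Algorithm~\ref{alg: 1} constructs a type-\1 certificate and stops (Steps~\ref{rain}--\ref{step: infeasible}).  By Proposition~\ref{l: 1}, $\gamma_i(d,\ell) \le (d_i/f(d,\ell))^{-1/2}$, so it suffices that $(d_i/f(d,\ell))^{-1/2} < \tau(A,u)$ for \emph{all} $i\in[m]$; equivalently $\mu_i(d,\ell) = \tfrac{m}{m+1}\tau(A,u)$ for all $i$, i.e.\ $\phi(d,\ell) = \bigl(\tfrac{m}{m+1}\tau(A,u)\bigr)^m$.  (I should note that because each iteration of Procedure~\ref{a: up_ellipse} is called with $f(d,\ell)=1$ after the rescaling in Step~\ref{step: scale1}, the hypothesis $\left(d_j/f(d,\ell)\right)^{-1/2}\ge\tau(A,u)$ needed in Lemma~\ref{t: main_t} is exactly the negation of the ``most violated slab is small'' condition, since $j$ is the most-violated index.)  So: as long as the algorithm has not yet stopped at Step~\ref{step: infeasible}, we must have $\gamma_j(d,\ell)\ge\tau(A,u)$ at that iteration, hence $(d_j/f(d,\ell))^{-1/2}\ge\tau(A,u)$, and the hypothesis of Lemma~\ref{t: main_t} applies.

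\medskip

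\noindent\textbf{Step 2: Apply the potential decrease.}  By Theorem~\ref{t: ssc}, the update $(d,\ell)\to(\tilde d,\tilde\ell)$ performed by Procedure~\ref{a: up_ellipse} satisfies
$$\frac{1}{f(\tilde d,\tilde\ell)}\tilde d = \alpha(d,\ell)\left(\frac{1}{f(d,\ell)}d + \frac{2}{m-1}\frac{1}{\gamma_j(d,\ell)^2}e_j\right),\qquad \alpha(d,\ell)>\frac{m^2-1}{m^2},$$
which is precisely the form required by Lemma~\ref{t: main_t}.  Combined with Step~1, every iteration that does not terminate reduces $\phi$ by the factor $e^{-1/(2(m+1))}$.  (I also need to handle the cases where Algorithm~\ref{alg: 1} stops for another reason: if it stops at Step~\ref{step: feasible} that contradicts infeasibility; if it stops at Step~\ref{step: callprocedure} or inside Procedure~\ref{a: up_ellipse} via $f\le 0$, it returns a type-\1 certificate via Procedure~\ref{a: cert_for_f_leq_0} by Proposition~\ref{p: alg_correct_f_leq_0}, which is fine and only happens earlier.)

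\medskip

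\noindent\textbf{Step 3: Bound the initial potential and count iterations.}  After $k$ non-terminating iterations, $\phi(d^{(k)},\ell^{(k)}) \le e^{-k/(2(m+1))}\,\phi(d^{(0)},\ell^{(0)})$.  Since $\phi \ge \bigl(\tfrac{m}{m+1}\tau(A,u)\bigr)^m$ always, once $e^{-k/(2(m+1))}\phi(d^{(0)},\ell^{(0)}) < \bigl(\tfrac{m}{m+1}\tau(A,u)\bigr)^m$ is impossible to maintain, the algorithm must already have terminated; more carefully, the algorithm cannot run for $k$ iterations if $e^{-k/(2(m+1))}\phi(d^{(0)},\ell^{(0)}) < \bigl(\tfrac{m}{m+1}\tau(A,u)\bigr)^m$, because then $\phi$ would have dropped below its own lower bound.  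Solving for $k$ gives termination within $\bigl\lceil 2(m+1)\ln\bigl(\phi(d^{(0)},\ell^{(0)})/(\tfrac{m}{m+1}\tau(A,u))^m\bigr)\bigr\rceil$ iterations.  It remains to bound the initial potential with $d^{(0)}=e$ and $f(d^{(0)},\ell)=1$ after rescaling in Step~\ref{step: scale1}: then $(d_i^{(0)}/f)^{-1/2} = (1/f(e,\ell))^{-1/2}$; I would estimate $f(e,\ell) = v(\ell)\adj v(\ell) - t(e,\ell)\adj t(e,\ell) \le \|v(\ell)\|^2 = \tfrac14\|u-\ell\|^2$, so each factor $\mu_i(e,\ell) \le \sqrt{f(e,\ell)}\le\tfrac12\|u-\ell\|$ (after the rescaling this becomes $\mu_i \le \tfrac12\|u-\ell\|$, or rather the unrescaled estimate $\mu_i(e,\ell)\le \tfrac{1}{2}\|u-\ell\|$ since $\mu_i$ is scale-invariant in the sense needed).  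Hence $\phi(d^{(0)},\ell) \le \bigl(\tfrac12\|u-\ell\|\bigr)^m$, and the ratio is $\le \bigl(\tfrac{m+1}{2m}\cdot\tfrac{\|u-\ell\|}{\tau(A,u)}\bigr)^m$.  Plugging in, $2(m+1)\ln$ of this is $2m(m+1)\ln\bigl(\tfrac{m+1}{2m}\tfrac{\|u-\ell\|}{\tau(A,u)}\bigr)$, matching the claimed bound (with a floor rather than a ceiling, which works because one needs strictly fewer than this many non-terminating iterations, or by a careful off-by-one argument using that $\phi$ is bounded below and the algorithm terminates \emph{before} $\phi$ can drop below that bound).

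\medskip

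\noindent\textbf{Main obstacle.}  The main technical care is in Step~3: getting the clean bound $\phi(d^{(0)},\ell) \le (\tfrac12\|u-\ell\|)^m$ and reconciling the floor in the statement with the logarithmic counting argument (an off-by-one issue handled by the fact that the potential is bounded strictly below and so the algorithm must stop \emph{before} the potential would need to undercut its bound, combined with the $\tfrac{m}{m+1}$ slack built into $\mu_i$).  The scale-invariance bookkeeping — ensuring that the rescalings in Steps~\ref{step: scale1} and~\ref{upelps4},~\ref{upelps7} leave $\phi$, $\gamma_i$, $y$, $t$ unchanged so that the hypotheses of Lemmas~\ref{t: main_t} and Theorem~\ref{t: ssc} line up with the actual iterates — is routine but needs to be stated precisely.
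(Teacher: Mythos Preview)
Your approach is essentially the same as the paper's: both combine Lemma~\ref{t: main_t} (potential decrease), Theorem~\ref{t: ssc} (form of the update), Proposition~\ref{l: 1} (slab-radius bound), and Proposition~\ref{r: cert_const} (termination when $\gamma_j<\tau$), then bound the initial potential via $f(e,\ell)\le\tfrac14\|u-\ell\|^2$ and count iterations.

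One small slip to fix in Step~3: you write ``each factor $\mu_i(e,\ell)\le\sqrt{f(e,\ell)}$,'' but by definition $\mu_i=\max\{\sqrt{f(e,\ell)},\tfrac{m}{m+1}\tau\}\ge\sqrt{f(e,\ell)}$, so the inequality points the wrong way. What you actually need (and what the paper uses) is that if the algorithm has \emph{not} terminated at the first iteration, then by your own Step~1 reasoning $(d_j/f)^{-1/2}=\sqrt{f(e,\ell)}\ge\tau>\tfrac{m}{m+1}\tau$, whence $\mu_i(e,\ell)=\sqrt{f(e,\ell)}$ for all $i$, giving $\phi(e,\ell)=\bigl(\sqrt{f(e,\ell)}\bigr)^m\le(\tfrac12\|u-\ell\|)^m$. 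The paper makes this case split explicit (treating $f(e,\ell)\le0$ and $\sqrt{f(e,\ell)}<\tau$ as immediate-termination cases before invoking the potential argument), whereas you fold it into the contrapositive of Step~1; either way works once the inequality is stated correctly.
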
 

\begin{proof}  In the notation of the theorem $d$ and $\ell$ are the initial values used as input to Algorithm \ref{alg: 1}.  First note that if $f(d,\ell) \le0$, then it follows from Proposition \ref{p: alg_correct_f_leq_0} that Step \ref{step: callprocedure} of Algorithm \ref{alg: 1} will return a certificate of infeasibility of $\poi$ at the very first iteration.  Also, if $\sqrt{f(e,\ell)} < \tee$, then it follows from Proposition \ref{l: 1} that $\gamma_i(e,\ell) < \tee$ for all $i \in [m]$, whereby for the index $j$ in Step \ref{step: max_viol} it holds that ${a_j}\adj y(d,\ell) - u_j \ge \tee > \gamma_j(e,\ell)$ which implies that $L_j > u_j$ in Step \ref{step: infeasible} of Algorithm \ref{alg: 1}, and so it follows from Corollary \ref{heatwave} that Algorithm \ref{alg: 1} will return a certificate of infeasibility of $\poi$ at Step \ref{step: infeasible} of the very first iteration.  We therefore suppose for the rest of the proof that $f(e, \ell) > 0$ and $\sqrt{f(e,\ell)} \ge \tee$.  

From the definition of the potential function, it therefore holds for the initial values of $d=e$ and $\ell$ that $\phi(e,\ell) = \Pi_{i=1}^m \sqrt{f(e,\ell)}$.  Notice that $f(e, \ell) = v(\ell)\adj I v(\ell) - t(e,\ell)\adj I t(d,\ell) \le v(\ell)\adj I v(\ell) = (\tfrac{1}{2}\|u - \ell\|)^2$, whereby $\phi(e,\ell) \le (\tfrac{1}{2}\|u - \ell\|)^m$.  

Suppose that Algorithm \ref{alg: 1} has completed $k$ iterations, and let $\hat d$ and $\hat \ell$ denote the values of $d$ and $\ell$ upon completion of iteration $k$.  It then follows from Lemma \ref{t: main_t} that

$$\left( \frac{m}{m+1} \tau(A,u) \right)^m \le \phi(\hat d, \hat \ell) \le e^{-\frac{k}{2(m+1)}} \phi(d,\ell) \le e^{-\frac{k}{2(m+1)}}(\tfrac{1}{2}\|u - \ell\|)^m \ , $$ where the first inequality uses the absolute lower bound on $\phi(\cdot, \cdot)$ from its definition, and the second inequality uses Lemma \ref{t: main_t}.  Taking logarithms of both sides and rearranging terms yields the inequality 
$ k \le  2m(m+1) \ln \left( \frac{m+1}{2m} \frac{\lVert u - \ell \rVert}{\tau(A,u)} \right) $ which proves the result.\end{proof}

Corollary \ref{c: infeas_box_guar} specializes Theorem \ref{t: infeas_guar} to instances of linear inequality systems with box constraints $(P_B)$ from Section \ref{ss: valid_lbs}. The corollary follows immediately from Theorem \ref{t: infeas_guar} and inequality (\ref{e: box_bound}). 

\begin{Cor} \label{c: infeas_box_guar} 
Consider the linear inequality system with box constraints $(P_B)$, and let $A$, $u$, $\ell$, and $\Lambda$ be defined as in \eqref{e: l_box}-\eqref{e: lam3_box}.  Let $d:=e \in \mathbb{R}^m$.  If $(P_B)$ is infeasible, Algorithm \ref{alg: 1} with input $A$, $u$, $\ell$, $\Lambda$, and $d$ will stop and return a type-\1 certificate of infeasibility in at most $$\left\lfloor 2m(m+1) \ln \left( \frac{(m+1)(\sqrt{\hat m +2})}{2m} \frac{\lVert \bar b - \underline b \rVert}{\tau(A,u)} \right) \right\rfloor$$ iterations. 

\end{Cor}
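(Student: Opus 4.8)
The plan is to obtain Corollary \ref{c: infeas_box_guar} directly from Theorem \ref{t: infeas_guar} by substituting the box-constraint data and then bounding $\lVert u - \ell \rVert$. First I would recall from Section \ref{ss: valid_lbs} that, given the box-constrained system $(P_B)$, the matrix $A$ and vector $u$ defined in \eqref{e: A_box}--\eqref{e: u_box} rewrite $(P_B)$ in the standard form $\poi : A^{\top}x \le u$, that these data satisfy Assumption \ref{assu1} (after the implicit column normalization the columns have unit norm, and the blocks $\pm I_{n\times n}$ guarantee $\{A\lambda : \lambda \ge 0\} = \mathbb{R}^n$), and that the vector $\ell$ in \eqref{e: l_box} together with the matrix $\Lambda$ assembled from \eqref{e: lam1_box}--\eqref{e: lam3_box} satisfies system $\lb$, i.e.\ $\ell$ is a certified lower bound for $\poi$ with certificate matrix $\Lambda$. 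Thus the hypotheses of Theorem \ref{t: infeas_guar} are met for this choice of $(A,u,\ell,\Lambda)$ with $d := e \in \mathbb{R}^m$.

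Next, since $(P_B)$ is infeasible precisely when $\poi : A^{\top}x \le u$ is infeasible for these data, Theorem \ref{t: infeas_guar} applies and guarantees that Algorithm \ref{alg: 1} with input $A, u, \ell, \Lambda, d$ halts with a type-$\1$ certificate of infeasibility within $\bigl\lfloor 2m(m+1)\ln\bigl( \tfrac{m+1}{2m}\tfrac{\lVert u - \ell\rVert}{\tau(A,u)} \bigr) \bigr\rfloor$ iterations. Finally I would invoke inequality \eqref{e: box_bound}, namely $\lVert u - \ell\rVert \le (\sqrt{\hat m + 2})\,\lVert \bar b - \underline b\rVert$, and use that $t \mapsto \ln t$ and $t \mapsto \lfloor t \rfloor$ are nondecreasing to replace $\lVert u-\ell\rVert$ by this upper bound inside the iteration count, which yields exactly the claimed bound $\bigl\lfloor 2m(m+1)\ln\bigl( \tfrac{(m+1)(\sqrt{\hat m + 2})}{2m}\tfrac{\lVert \bar b - \underline b\rVert}{\tau(A,u)} \bigr)\bigr\rfloor$.

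There is essentially no obstacle here: the corollary is a pure specialization. The only ingredients requiring verification are (i) that the box-constraint construction of Section \ref{ss: valid_lbs} genuinely satisfies $\lb$ and Assumption \ref{assu1} and (ii) inequality \eqref{e: box_bound}, both of which are already established (or stated as straightforward) in Section \ref{ss: valid_lbs} as consequences of \eqref{drew}, \eqref{cliff}, and Assumption \ref{assu1}. One bookkeeping remark worth including is that in the reformulated system $m = \hat m + 2n$, so the prefactor $2m(m+1)$ is expressed in terms of the enlarged dimension; no further adjustment of that prefactor is needed because Theorem \ref{t: infeas_guar} is already phrased in terms of the standard-form dimension $m$.
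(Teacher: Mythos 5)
Your proposal is correct and matches the paper's own (very brief) argument: the corollary is obtained by applying Theorem \ref{t: infeas_guar} to the standard-form data built in Section \ref{ss: valid_lbs} and then replacing $\lVert u-\ell\rVert$ by the bound \eqref{e: box_bound}, using monotonicity of $\ln$ and the floor function. No gaps; your added remarks on verifying $\lb$, Assumption \ref{assu1}, and $m = \hat m + 2n$ are consistent with what the paper already establishes in Section \ref{ss: valid_lbs}.
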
 \qed

\subsection{Computational Guarantees when $\poi$ is Feasible} \label{s: feasible} 
In this subsection we examine the computational complexity of Algorithm \ref{alg: 1} in the case when $\poi$ is feasible.  Our analysis is in some sense standard, in that we show that upon updating the values of $d$ and $\ell$ in Procedure \ref{a: up_ellipse}, the volume of the newly updated ellipsoid $E(d,\ell)$ decreases by a sufficient amount.  For $d \in \mathbb{R}^m_{++}$ and $\ell \in \mathbb{R}^m$ satisfying $f(d,\ell) > 0$, the (relative) volume of $E(d,\ell)$ is:
$$\vol E(d,\ell)  := \frac{(f(d,\ell))^{\tfrac{n}{2}}}{\sqrt{\det ADA^{\top}}} \  $$
(relative in that it ignores the dimensional constant $c_n = \frac{\pi^{(n/2)}}{\Gamma(n/2+1)}$).  Lemma \ref{l: volume_dec} below states that after updating $d$ and $\ell$ in Procedure \ref{a: up_ellipse}, the volume of the ellipsoid $E(d,\ell)$ decreases by at least the multiplicative factor $e^{-\frac{1}{2(m+1)}}$.

\begin{Lem}[Volume decrease] \label{l: volume_dec}  
Let $d > 0$ and $\ell \in \mathbb{R}^m$ satisfy $f(d,\ell) > 0 $, and similarly let  $\tilde{d} > 0$ and $\tilde{\ell} \in \mathbb{R}^m$ satisfy $f(\tilde{d},\tilde{\ell}) > 0$.  Let $j \in [m]$ be given, and suppose that $d$, $\ell$, $\tilde d$, $\tilde \ell$ satisfy: $$\frac{1}{f(\tilde{d},\tilde{\ell})} \tilde{d} = \alpha \left(\frac{1}{f(d,\ell)} d+ \frac{2}{m-1} \frac{1}{\gamma_j(d,\ell)^2} e_j \right) \ , $$ for a scalar $\alpha \geq \frac{m^2-1}{m^2}$. Then
$$\vol E(\tilde{d},\tilde{\ell}) \leq e^{-\frac{1}{2(m+1)}}  \vol E(d,\ell) \ . $$ 
\end{Lem}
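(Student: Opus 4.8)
The plan is to reduce everything to a clean comparison of the two volumes via the defining formula $\vol E(d,\ell) = f(d,\ell)^{n/2}/\sqrt{\det ADA^\top}$ and then exploit the rank-one structure of the update of $d$. First I would use the positive-scaling invariance of the ellipsoid: since $E(d,\ell) = E(\alpha d,\ell)$ for $\alpha>0$ and $\vol E$ is genuinely invariant under such scaling (one checks $f(\alpha d,\ell)^{n/2}/\sqrt{\det(\alpha A D A^\top)} = \alpha^{n/2}f(d,\ell)^{n/2}/(\alpha^{n/2}\sqrt{\det ADA^\top})$), I may replace $d$ by $\bar d := d/f(d,\ell)$ and $\tilde d$ by $\bar{\tilde d} := \tilde d/f(\tilde d,\tilde\ell)$ without changing either volume. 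The hypothesis then says exactly $\bar{\tilde d} = \alpha(\bar d + \tfrac{2}{m-1}\tfrac{1}{\gamma_j(d,\ell)^2}e_j)$ with $\alpha \ge \tfrac{m^2-1}{m^2}$, and both normalized parameter vectors satisfy $f(\cdot,\cdot)=1$ relative to their own $\ell$; so I only need to bound $\det A\bar{\tilde D}A^\top$ from below in terms of $\det A\bar D A^\top$, because with $f=1$ on both sides $\vol E(\tilde d,\tilde\ell) = 1/\sqrt{\det A\bar{\tilde D}A^\top}$ and likewise for the other.

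The key computation is the determinant ratio. Writing $B = A\bar D A^\top$, the update gives $A\bar{\tilde D}A^\top = \alpha\bigl(B + \tfrac{2}{(m-1)\gamma_j^2}\,a_j a_j^\top\bigr)$ where $\gamma_j = \gamma_j(d,\ell)$. By the matrix determinant lemma,
\[
\det A\bar{\tilde D}A^\top = \alpha^n \det B \cdot \left(1 + \frac{2}{(m-1)\gamma_j^2}\, a_j^\top B^{-1} a_j\right).
\]
Now invoke the slab-radius identity \eqref{running}: with $f=1$ we have $\gamma_j(d,\ell)^2 = a_j^\top B^{-1} a_j$ (here $B = A\bar D A^\top$ is the shape matrix of $E(\bar d,\ell) = E(d,\ell)$, which has $f=1$). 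Hence the parenthetical factor is exactly $1 + \tfrac{2}{m-1} = \tfrac{m+1}{m-1}$, giving
\[
\det A\bar{\tilde D}A^\top = \alpha^n \cdot \frac{m+1}{m-1}\cdot \det B \ \ge\ \left(\frac{m^2-1}{m^2}\right)^{n}\frac{m+1}{m-1}\,\det B,
\]
using $\alpha \ge \tfrac{m^2-1}{m^2}$ and $n \ge 1$. Therefore
\[
\vol E(\tilde d,\tilde\ell) = \frac{1}{\sqrt{\det A\bar{\tilde D}A^\top}} \le \left(\frac{m^2}{m^2-1}\right)^{n/2}\left(\frac{m-1}{m+1}\right)^{1/2}\vol E(d,\ell).
\]

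It remains to show the scalar factor $\bigl(\tfrac{m^2}{m^2-1}\bigr)^{n/2}\bigl(\tfrac{m-1}{m+1}\bigr)^{1/2}$ is at most $e^{-1/(2(m+1))}$, i.e. $n\ln\tfrac{m^2}{m^2-1} + \ln\tfrac{m-1}{m+1} \le -\tfrac{1}{m+1}$. Since $n < m$, it suffices to prove $m\ln\tfrac{m^2}{m^2-1} - \ln\tfrac{m+1}{m-1} \le -\tfrac{1}{m+1}$; rewriting, $m\ln\tfrac{m^2}{m^2-1} = m\ln\tfrac{m}{m-1} + m\ln\tfrac{m}{m+1}$ and $\ln\tfrac{m+1}{m-1} = \ln\tfrac{m+1}{m} + \ln\tfrac{m}{m-1}$, so the claim becomes $(m-1)\ln\tfrac{m}{m-1} - (m+1)\ln\tfrac{m+1}{m} \le -\tfrac{1}{m+1}$. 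This is a standard one-variable estimate: using $\ln(1+x) \le x$ and $\ln(1+x)\ge x - x^2/2$ carefully (or the integral comparison $(m-1)\ln\tfrac{m}{m-1} \le 1 - \tfrac{1}{2m}$ and $(m+1)\ln\tfrac{m+1}{m}\ge 1 + \tfrac{1}{2(m+1)}$ type bounds), the left side is $\le -\tfrac{1}{2m} - \tfrac{1}{2(m+1)} \le -\tfrac{1}{m+1}$. The main obstacle — and the only place requiring genuine care rather than bookkeeping — is this final logarithmic inequality; I would handle it by establishing the two sharp bounds $(m-1)\ln\frac{m}{m-1}\le 1-\frac{1}{2m}$ and $(m+1)\ln\frac{m+1}{m}\ge 1+\frac{1}{2(m+1)}-\frac{1}{6(m+1)^2}$ via the standard series/convexity estimates for $\ln(1\pm 1/k)$, which together comfortably give the required slack for all $m\ge 2$ (and $m>n\ge 1$ forces $m\ge 2$). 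Everything else is the determinant lemma plus the already-proved identity $\gamma_j^2 = a_j^\top B^{-1}a_j$ and the scaling invariance of volume.
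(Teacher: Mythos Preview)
Your approach is essentially the same as the paper's: normalize so $f=1$, use the matrix determinant lemma together with $\gamma_j^2 = a_j^\top B^{-1}a_j$ to get the scalar factor $\bigl(\tfrac{m-1}{m+1}\bigr)^{1/2}\bigl(\tfrac{m^2}{m^2-1}\bigr)^{n/2}$, then bound $n$ by $m$ and finish with a logarithmic inequality. The only real difference is the last step, where you propose delicate two-sided series bounds on $(m-1)\ln\tfrac{m}{m-1}$ and $(m+1)\ln\tfrac{m+1}{m}$; the paper avoids this by a one-line algebraic rewrite
\[
\left(\frac{m-1}{m+1}\right)^{1/2}\left(\frac{m^2}{m^2-1}\right)^{m/2}
=\frac{m}{m+1}\left(\frac{m^2}{m^2-1}\right)^{(m-1)/2}
=\left(1-\tfrac{1}{m+1}\right)\left(1+\tfrac{1}{m^2-1}\right)^{(m-1)/2},
\]
after which two applications of $1+x\le e^x$ give exactly $e^{-1/(m+1)}\cdot e^{1/(2(m+1))}=e^{-1/(2(m+1))}$. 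This bypasses the need for any sharp bounds on $k\ln(1\pm 1/k)$; your proposed inequalities would need some care (for instance the implication $1+\tfrac{1}{2m}-\tfrac{1}{2m^2}\ge 1+\tfrac{1}{2(m+1)}$ is actually false), though the route can be made to work with corrected constants.
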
 \qed

With Lemma \ref{l: volume_dec}  in hand, we now state and prove our main computational guarantee for Algorithm \ref{alg: 1} in the case when $\poi$ is feasible.  Note that the theorem uses the condition number $\cc$, which was introduced in \eqref{fridays} and measures the distance to unboundedness as discussed earlier.

\begin{Th} \label{t: feas_guar} 
Let $\ell \in \mathbb{R}^m$ be certified lower bounds for $\poi$ with certificate matrix $\Lambda$.  Let $d:=e \in \mathbb{R}^m$.  If $\poi$ is feasible, Algorithm \ref{alg: 1} with input $A$, $u$, $\ell$, $\Lambda$, and $d$ will stop and return a feasible solution of $\poi$ in at most $$\left\lfloor 2n(m+1) \ln \left( \frac{\lVert u - \ell \rVert}{2 \cc \tau(A,u)} \right) \right\rfloor$$ iterations. \qed
\end{Th}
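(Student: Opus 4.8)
The plan is the classical ellipsoid volume argument, transported to the parametrized family $E(d,\ell)$: sandwich the (relative) volume $\vol E(d,\ell)$ between a lower bound coming from the inscribed ball of radius $\tau(A,u)$ and an upper bound that decays geometrically, and read off the iteration count. First I would dispose of degenerate cases: if $f(e,\ell)\le 0$ then, since $\poi$ is feasible, $E(e,\ell)\supseteq\cp\neq\emptyset$ forces $f(e,\ell)=0$ and $\cp=\{y(e,\ell)\}$ with $A^{\top}y(e,\ell)\le u$, so Step~\ref{step: feasible} of Algorithm~\ref{alg: 1} returns a solution immediately; and if $\tau(A,u)=0$ the claimed bound is $+\infty$, so there is nothing to prove. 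Hence assume $f(e,\ell)>0$ and $\tau:=\tau(A,u)>0$. I would also note that when $\poi$ is feasible the algorithm can terminate only by returning a feasible point, since Steps~\ref{step: callprocedure} and~\ref{step: infeasible} of Algorithm~\ref{alg: 1} and Step~\ref{upelps3} of Procedure~\ref{a: up_ellipse} all output infeasibility certificates and so, by correctness of those routines, are never reached.

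The key invariant is that throughout the run $\ell$ remains a certified lower bound for $\poi$ with certificate matrix $\Lambda$ — maintained by Lemma~\ref{l: ss}, the certificate refresh in Steps~\ref{rain}--\ref{morerain}, and Lemma~\ref{l: construct}(a) — and by Remark~\ref{r: once} we may assume $f(d,\ell)=1$ at the start of every iteration. Consequently $\cp\subseteq E(d,\ell)$ at every iteration, and since $\cp$ contains a Euclidean ball of radius $\tau$ (Proposition~\ref{taulist}(a)) and $\vol$ is the volume relative to the unit ball (so a ball of radius $r$ has $\vol = r^{n}$), we obtain the uniform lower bound $\vol E(d,\ell)\ge\tau^{n}$. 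For the per-iteration decay: in the feasible case every non-terminating iteration performs a full update through Procedure~\ref{a: up_ellipse}, because $f(d,\ell^{(1)})\le 0$ would force $\cp\subseteq E(d,\ell^{(1)})=\emptyset$ or $\cp=\{y(d,\ell^{(1)})\}$ (the latter caught in Step~\ref{upelps2}), so either the iteration stops with a solution or $f(d,\ell^{(1)})>0$ and Theorem~\ref{t: ssc} applies, yielding $(\tilde d,\tilde\ell)$ with $f(\tilde d,\tilde\ell)=1$ and $\tilde d=\alpha(d,\ell)\,(\,d+\tfrac{2}{m-1}\tfrac{1}{\gamma_j(d,\ell)^{2}}e_j\,)$ with $\alpha(d,\ell)>\tfrac{m^{2}-1}{m^{2}}$. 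Since $f(d,\ell)=f(\tilde d,\tilde\ell)=1$, the hypothesis of Lemma~\ref{l: volume_dec} holds, so $\vol E(\tilde d,\tilde\ell)\le e^{-1/(2(m+1))}\vol E(d,\ell)$.

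Next I would bound the initial volume and conclude. With $d=e$ we have $f(e,\ell)=\|v(\ell)\|^{2}-\|t(e,\ell)\|^{2}\le(\tfrac12\|u-\ell\|)^{2}$, while $\det AA^{\top}=\prod_{i=1}^{n}\sigma_i(A)^{2}\ge\sigma_{\min}(A)^{2n}\ge\rho(A)^{2n}$, where the last inequality uses the characterization $\rho(A)=\min_{\|v\|=1}\max_i a_i^{\top}v\le\min_{\|v\|=1}\|A^{\top}v\|_2=\sigma_{\min}(A)$ (which I would record separately, using Assumption~\ref{assu1} to guarantee $A^{\top}v\le 0$ has only the trivial solution, hence $\rho(A)>0$). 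Therefore $\vol E(e,\ell)=f(e,\ell)^{n/2}/\sqrt{\det AA^{\top}}\le(\|u-\ell\|/(2\rho(A)))^{n}$. Telescoping the decay, if the algorithm completes $k$ full iterations without returning a solution then $\tau^{n}\le\vol E(d^{(k)},\ell^{(k)})\le e^{-k/(2(m+1))}(\|u-\ell\|/(2\rho(A)))^{n}$; taking logarithms and rearranging gives $k\le 2n(m+1)\ln(\|u-\ell\|/(2\rho(A)\tau(A,u)))$, so the algorithm must have returned a feasible solution within the stated number of iterations.

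I expect the only nonroutine points to be: (i) the lower bound $\det AA^{\top}\ge\rho(A)^{2n}$, which hinges on the $\ell_\infty$-type characterization of $\rho(A)$ together with Assumption~\ref{assu1}; and (ii) the bookkeeping that in the feasible case every non-terminating iteration really does a full volume-reducing update, so that the telescoping is valid, together with care about any off-by-one in how iterations are counted. Everything else is the standard volume sandwich once Theorem~\ref{t: ssc}, Lemma~\ref{l: volume_dec}, and Proposition~\ref{taulist} are invoked.
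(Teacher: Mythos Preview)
Your proposal is correct and follows essentially the same route as the paper: bound the initial volume using $f(e,\ell)\le(\tfrac12\|u-\ell\|)^2$ and $\det AA^{\top}\ge\rho(A)^{2n}$, bound the running volume below by $\tau(A,u)^n$ via the inscribed ball, apply Lemma~\ref{l: volume_dec} for the per-iteration decay, and solve for $k$. Your handling of the determinant bound via $\rho(A)\le\sigma_{\min}(A)$ is equivalent to the paper's Proposition~\ref{evie}, and your additional care with degenerate cases and the bookkeeping that feasibility precludes the infeasibility exits is more explicit than, but consistent with, the paper's argument.
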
 

\begin{proof}  In the notation of the theorem $d=e$ and $\ell$ are the initial values used as input to Algorithm \ref{alg: 1}.  Let us first bound the volume of the initial ellipsoid $E(d,\ell)$.  From the bound on $f(e,\ell)$ in the proof of Theorem \ref{t: infeas_guar} we have:
$$\vol E(d,\ell) = \frac{(f(e,\ell))^{\tfrac{n}{2}}}{\sqrt{\det AA^{\top}}}  \le \frac{(\tfrac{1}{2}\|u - \ell\|)^n}{\cc^n} = \left(\frac{\|u - \ell\|)}{2\cc}\right)^n \ , $$
where the bound in the denominator above uses Proposition \ref{evie}.  Suppose that Algorithm \ref{alg: 1} has completed $k$ iterations, and let $\hat d$ and $\hat \ell$ denote the values of $d$ and $\ell$ upon completion of iteration $k$.  Next notice that  $E(\hat d, \hat \ell) \supset \cp \supset B(c,\tee)$ for some $c \in \cp$ where the second inclusion follows from Proposition \ref{taulist}.  Therefore a lower bound on $\vol E(\hat d, \hat \ell)$ is $\tee^n$.  It then follows from Lemma \ref{l: volume_dec} that

$$\tee^n \le \vol E(\hat d, \hat \ell) \le e^{-\frac{k}{2(m+1)}} \vol E(d,  \ell) \le e^{-\frac{k}{2(m+1)}}\left(\frac{\|u - \ell\|)}{2\cc}\right)^n \ , $$ where the second inequality uses Lemma \ref{t: main_t} and the third inequality uses the upper bound on $\vol E(d,\ell)$.  Taking logarithms of both sides and rearranging terms yields the inequality 
$ k \le  2n(m+1)  \ln \left( \frac{\lVert u - \ell \rVert}{2 \cc \tau(A,u)} \right) $ which proves the result.\end{proof}

Corollary \ref{c: feas_box_guar} specializes Theorem \ref{t: feas_guar} to instances of linear inequality systems with box constraints $(P_B)$ from Section \ref{ss: valid_lbs}. The corollary follows from Theorem \ref{t: feas_guar}, inequality (\ref{e: box_bound}), and the fact that if $A^{\top} x \leq u$ contains box constraints, then $\det AA^{\top} > 1$ (and so $\rho(A)$ vanishes in the guarantee).  

\begin{Cor} \label{c: feas_box_guar} 
Consider the linear inequality system with box constraints $(P_B)$, and let $A$, $u$, $\ell$, and $\Lambda$ be defined as in \eqref{e: l_box}-\eqref{e: lam3_box}.  Let $d:=e \in \mathbb{R}^m$.  If $(P_B)$ is feasible, Algorithm \ref{alg: 1} with input $A$, $u$, $\ell$, $\Lambda$, and $d$ will stop and return a feasible solution of $(P_B)$ in at most $$\left\lfloor 2n(m+1) \ln \left( \frac{\sqrt{\hat m +2}\lVert \bar b -\underline b \rVert}{2 \tau(A,u)} \right) \right\rfloor$$ iterations. 

\end{Cor}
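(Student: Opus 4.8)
The plan is to specialize the proof of Theorem \ref{t: feas_guar} to the box-constraint setting, replacing two quantities by cruder but cleaner bounds. First I would invoke Theorem \ref{t: feas_guar} directly: writing $(P_B)$ in the form $A^\top x \le u$ via \eqref{e: A_box}--\eqref{e: u_box}, the system is feasible and the $\ell$, $\Lambda$ from \eqref{e: l_box}--\eqref{e: lam3_box} are certified lower bounds for $\poi$, so Algorithm \ref{alg: 1} terminates with a feasible solution; the only remaining work is to re-derive the iteration count with the box data by retracing the short volume-reduction argument in the proof of Theorem \ref{t: feas_guar}.

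The two substitutions are: (i) bound the numerator $\|u-\ell\|$ using inequality \eqref{e: box_bound}, namely $\|u-\ell\| \le \sqrt{\hat m + 2}\,\|\bar b - \underline b\|$; and (ii) observe that for the box-structured matrix $A = [\hat A \mid I_{n\times n} \mid -I_{n\times n}]$ we have $AA^\top = \hat A\hat A^\top + 2 I_{n\times n} \succeq 2 I_{n\times n}$, hence $\det AA^\top \ge 2^n > 1$. Substitution (ii) lets us replace $\sqrt{\det AA^\top}$ in the denominator — which in the proof of Theorem \ref{t: feas_guar} was lower-bounded by $\cc^n$ via Proposition \ref{evie} — simply by $1$, so that $\cc$ drops out of the bound entirely. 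Concretely, using $f(e,\ell) \le v(\ell)^\top v(\ell) = (\tfrac12\|u-\ell\|)^2$ exactly as in the earlier proof, this gives $\vol E(e,\ell) = (f(e,\ell))^{n/2}/\sqrt{\det AA^\top} \le (\tfrac12\|u-\ell\|)^n \le \big(\tfrac12\sqrt{\hat m + 2}\,\|\bar b - \underline b\|\big)^n$.

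Then I would close the argument verbatim as in Theorem \ref{t: feas_guar}: after $k$ completed iterations, with current parameters $\hat d,\hat\ell$, the containments $E(\hat d,\hat\ell) \supseteq \cp \supseteq B(c,\tee)$ for some $c \in \cp$ (Proposition \ref{taulist}) give $\tee^{\,n} \le \vol E(\hat d,\hat\ell)$, while Lemma \ref{l: volume_dec}, applied along each call of Procedure \ref{a: up_ellipse}, gives $\vol E(\hat d,\hat\ell) \le e^{-k/(2(m+1))}\vol E(e,\ell)$. Chaining these with the initial-volume bound, taking logarithms, and rearranging yields $k \le 2n(m+1)\ln\!\big(\sqrt{\hat m + 2}\,\|\bar b - \underline b\|/(2\tee)\big)$, which is the claimed bound after taking the floor. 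I expect no real obstacle here beyond the elementary fact $\det AA^\top \ge 2^n$; everything else is a transcription of the proof of Theorem \ref{t: feas_guar} with $\cc$ replaced by the lower bound $1$ and $\|u-\ell\|$ replaced using \eqref{e: box_bound}.
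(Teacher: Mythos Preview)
Your proposal is correct and follows essentially the same approach as the paper, which states that the corollary follows from Theorem \ref{t: feas_guar}, inequality \eqref{e: box_bound}, and the fact that $\det AA^{\top} > 1$ in the box-constraint setting (so that $\rho(A)$ drops out of the bound). Your computation $AA^\top = \hat A\hat A^\top + 2I \succeq 2I$ giving $\det AA^\top \ge 2^n > 1$ is exactly the right way to justify that last fact.
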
 \qed\medskip

We conclude this section by pointing to the computational complexity of OEA.  Remark \ref{notsofast} states that the operations complexity of an iteration of OEA is $O(m^2)$ operations.  Combining this with the iteration complexity of Theorem \ref{t: infeas_guar} and Theorem \ref{t: feas_guar} yields the computational complexity bounds for OEA in the second row of Table \ref{comp}.

\section{Modified Versions of Algorithm OEA} \label{oeas}
In this section we present two modified versions of OEA, which we call OEA-No-Alt and OEA-MM, for Challenges \ref{p2} and \ref{p1}, respectively.

\subsection{OEA-No-Alt} 
OEA-No-Alt is a simpler version of OEA that does not iteratively update the information needed to produce a type-\1 certificate of infeasibility.  The algorithm still proves infeasibility by correctly detecting infeasibility when $\poi$ is infeasible, but it does not produce a solution of $\alt$, hence the notation ``OEA-No-Alt.''  The modified algorithm is based on two rather elementary observations about OEA, as follows.

The first observation concerns the role of the updates of $\Lambda$ in OEA.  Observe that the certificate matrix $\Lambda$ is never used anywhere in the computational rules in OEA nor in the updates of any objects other than $\Lambda$ itself; these updates of $\Lambda$ are pure ``record-keeping'' and their sole purpose is to eventually produce a Type-\1 certificate of infeasibility (a solution of $\alt$) after such infeasibility is detected and the algorithm needs no further iterations.  Hence, if one is not interested in actually computing a solution of $\alt$, any and all updates of $\Lambda$ can be omitted.  By omitting the updates of $\Lambda$ the algorithm will no longer produce a solution of $\alt$ in the case when $\poi$ is infeasible, and hence we denote this simplified version of OEA as OEA-No-Alt.  Nevertheless the updated values of $\Lambda$ exist (but are just not computed).  

A somewhat formal description of OEA-No-Alt is as follows. Instead of calling Procedure \ref{a: cert_for_f_leq_0} in Step \ref{step: callprocedure} when $f(d,\ell) \leq 0$ (implying that $\poi$ is infeasible), OEA-No-Alt simply declares infeasibility and
stops. In Step \ref{rain} there is no update of the certificate for constraint $j$, and Steps \ref{raining}-\ref{morerain} are thus omitted. And instead of returning a type-\1 certificate of infeasibility in Step \ref{step: infeasible} when $L_j > u_j$ (implying $\poi$ is infeasible), OEA-No-Alt simply declares infeasibility and stops. Finally, in Step \ref{ellup}, instead of calling Procedure \ref{a: cert_for_f_leq_0} inside of Procedure \ref{a: up_ellipse} (implying $\poi$ is infeasible), OEA-No-Alt simply declares infeasibility and stops. 

Notice from the above formal description of OEA-No-Alt that the stopping criteria in the case when $\poi$ is infeasible are identical to that in the original OEA.  Hence, in the case when $\poi$ is infeasible, OEA-No-Alt will stop when and only when it detects infeasibility exactly as in the original OEA.

The second observation concerns the operations complexity of an iteration of OEA. The computational complexity of an iteration of OEA is $O(mn)$ operations except for the updates of the certificate matrix $\Lambda$, which are $O(m^2)$ operations.  Therefore, if we eliminate the updates of the matrix $\Lambda$, the operations complexity of an iteration of the resulting algorithm is $O(mn)$ operations.

The above analysis yields the following computational complexity result for OEA-No-Alt.

\begin{Cor}\label{wowie} Let $\ell \in \mathbb{R}^m$ be certified lower bounds for $\poi$ with certificate matrix $\Lambda$.  Let $d:=e \in \mathbb{R}^m$.  If $\poi$ is infeasible, Algorithm OEA-No-Alt with input $A$, $u$, $\ell$, $\Lambda$, and $d$ will correctly detect infeasibility, proving that $\poi$ is infeasible, with the same iteration bound as given in Theorem \ref{t: infeas_guar}.  Therefore the total computational complexity of Algorithm OEA-No-Alt is $O(m^3n \ln \frac{1}{\tau})$ operations. \qed
\end{Cor}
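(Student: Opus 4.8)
The plan is to observe that OEA-No-Alt and OEA are identical algorithms from the point of view of the iterates $(d,\ell)$ and the stopping conditions, and that all the infeasibility-detection guarantees established for OEA in Theorem \ref{t: infeas_guar} depend only on these objects and not on $\Lambda$. First I would note that OEA-No-Alt executes exactly the same updates of $d$ and $\ell$ as OEA: Steps \ref{raining}--\ref{morerain} only modify $\lambda_j$ (a column of $\Lambda$), which is never read by any other computation (this is precisely the first observation preceding the corollary, and is also noted in Remark \ref{notsofast}), so removing them changes nothing about the trajectory of $(d,\ell)$ or about the values $y(d,\ell)$, $f(d,\ell)$, $\gamma_j(d,\ell)$, and the index $j$ chosen in Step \ref{step: max_viol}. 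Consequently, the three events that cause OEA to halt and declare/certify infeasibility when $\poi$ is infeasible --- namely $f(d,\ell)\le 0$ in Step \ref{step: callprocedure}, $L_j>u_j$ in Step \ref{step: infeasible}, and $f(d,\hat\ell)\le 0$ inside Procedure \ref{a: up_ellipse} at Step \ref{ellup} --- occur at exactly the same iteration in OEA-No-Alt, where the algorithm simply declares ``infeasible'' rather than invoking Procedure \ref{a: cert_for_f_leq_0} or returning $\bar\lambda_j$. Hence OEA-No-Alt terminates with a correct declaration of infeasibility on exactly the iteration that OEA terminates, and the iteration bound of Theorem \ref{t: infeas_guar}, namely $\lfloor 2m(m+1)\ln(\tfrac{m+1}{2m}\tfrac{\|u-\ell\|}{\tau(A,u)})\rfloor$, carries over verbatim.

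Next I would justify correctness of the infeasibility declaration in each of the three halting cases, each of which is immediate from results already in the excerpt: if $f(d,\ell)\le 0$ then either $E(d,\ell)=\emptyset$ (when $f<0$) forcing $\cp=\emptyset$, or $f(d,\ell)=0$ and $A^\top y(d,\ell)\not\le u$ was checked in Step \ref{step: feasible}, so $\cp=\{y(d,\ell)\}$ is ruled out and again $\cp=\emptyset$; if $L_j>u_j$ then by the definition of $L_j$ and Proposition \ref{p: cert_const}/Corollary \ref{heatwave} (the underlying lower-bound argument, which does not require $\Lambda$ to be explicitly formed) we have $\cp=\emptyset$; and the case inside Procedure \ref{a: up_ellipse} reduces to one of the previous two. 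Since Theorem \ref{t: infeas_guar} guarantees that when $\poi$ is infeasible one of these events must occur within the stated number of iterations, OEA-No-Alt correctly proves infeasibility within that bound.

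Finally, for the computational complexity I would invoke the second observation preceding the corollary: every operation performed in an iteration of OEA other than the update $\tilde\lambda_j:=\Lambda\hat\lambda_j^-+\hat\lambda_j^+$ in Step \ref{expensive} costs $O(mn)$, because the shape matrix $B(d)=ADA^\top$ is $n\times n$, all vectors involved in computing $y(d,\ell)$, $t(d,\ell)$, $\gamma_j(d,\ell)$, $f(d,\ell)$, and $\hat\lambda_j$ have length $m$ or $n$, and the rank-one updates of $B(d)^{-1}$ and $\det B(d)$ cost $O(n^2)$; only the matrix-vector product with the $m\times m$ matrix $\Lambda$ costs $O(m^2)$. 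Deleting the $\Lambda$-updates therefore brings the per-iteration cost down to $O(mn)$. Multiplying the iteration bound $O(m^2\ln\frac1\tau)$ by $O(mn)$ operations per iteration gives the total $O(m^3 n\ln\frac1\tau)$, as claimed.

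I do not anticipate a genuine obstacle here; the content of the corollary is essentially a bookkeeping argument, and the only mild subtlety is making explicit that the correctness of each ``declare infeasible'' branch is underwritten by an argument (Proposition \ref{p: cert_const}, Corollary \ref{heatwave}, Proposition \ref{p: alg_correct_f_leq_0}, and the $f(d,\ell)\le 0$ discussion in Section \ref{s: updating_certificates}) that never actually needs the matrix $\Lambda$ to be formed --- it only needs $\Lambda$ to exist in order to define $\tilde\lambda_j$, which is exactly the point made in the paragraph ``Nevertheless the updated values of $\Lambda$ exist (but are just not computed).''
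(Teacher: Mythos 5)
Your proposal is correct and follows essentially the same route as the paper: the paper's own argument is precisely the two observations you cite (that $\Lambda$ is pure record-keeping never read by any other computation, so the $(d,\ell)$ trajectory and the infeasibility-stopping criteria coincide with those of OEA, and that deleting the $\Lambda$-updates reduces the per-iteration cost from $O(m^2)$ to $O(mn)$), combined with the iteration bound of Theorem \ref{t: infeas_guar}. Your additional detail verifying correctness of each declare-infeasible branch without forming $\Lambda$ is a fine elaboration but does not change the substance of the argument.
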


The computational complexity bounds for OEA-No-Alt in the third row of Table \ref{comp} follow directly from the above observations.

\subsection{OEA-MM}\label{covid3}
OEA-MM is very similar to OEA-No-Alt, except that it computes and stores certain information at each iteration that can be used after-the-fact to later construct the final certificate matrix $\Lambda$ that would have been produced by the complete OEA algorithm.  In this way, if $\poi$ is infeasible, the final $\Lambda$ can be constructed after-the-fact and used to produce the solution of $\alt$ exactly as in the complete OEA.  And if $\poi$ is feasible, no certificate matrix is needed and so computing $\Lambda$ is unnecessary.  OEA-MM is based on the following notions.  
\begin{enumerate}
\item Just like OEA-No-Alt, OEA-MM does not iteratively update the certificate matrix $\Lambda$, and in this aspect it is identical to OEA-No-Alt.  By not updating the certificate matrix $\Lambda$ at each iteration, the per-iteration complexity is reduced to $O(mn)$ operations per iteration just like in the algorithm OEA-No-Alt.  
\item However, in the interest of having the capability of computing the solution of $\alt$ after-the-fact that would have been computed by the complete algorithm OEA if $\poi$ is infeasible, OEA-MM computes and stores the information needed to re-construct the certificate of infeasibility that would be computed by the complete algorithm OEA.  For this reason the algorithm has the notation ``-MM'' for more memory.
\item If $\poi$ is infeasible, the information stored at each iteration is then used to construct the type-\1 certificate of infeasibility that the complete algorithm OEA would have computed.
\end{enumerate} 

Before going into the details of algorithm OEA-MM, we first step back and examine certain properties of the complete algorithm OEA under the assumption that $\poi$ is infeasible.  OEA constructs a type-$L$ certificate of infeasibility either in Step \ref{step: infeasible} of Algorithm \ref{alg: 1} or in Step \ref{s: cert3} of Procedure \ref{a: cert_for_f_leq_0} (after being called by Algorithm \ref{alg: 1}).  Let $k$ be the number of iterations of algorithm OEA in which the certificate matrix is updated in Step \ref{expensive} of Algorithm \ref{alg: 1} or Step \ref{s: cert2} of Procedure \ref{a: cert_for_f_leq_0} (an upper bound on $k$ is given in Theorem \ref{t: infeas_guar}).  Let us denote the $i$-th certificate matrix that OEA constructs by $\Lambda^{(i)}$ for $i \in [k]$.  For consistency, we denote the initial given certificate matrix as $\Lambda^{(0)}$.  

OEA updates the previous certificate matrix $\Lambda^{(i-1)}$ to the new certificate matrix $\Lambda^{(i)}$ in Step \ref{expensive} of Algorithm \ref{alg: 1} or Step \ref{s: cert2} of Procedure \ref{a: cert_for_f_leq_0} by first computing the relevant index $j_i := j$ in Step \ref{step: max_viol} of Algorithm \ref{alg: 1} or $j_i := k$ in Step \ref{s: k} of Procedure \ref{a: cert_for_f_leq_0}, along with the vector $\hlambda_{(i)} := \hlambda_j$ in Step \ref{raining} of Algorithm \ref{alg: 1} or $\hlambda_{(i)} := \hlambda_k$ in Step \ref{s: cert1} of Procedure \ref{a: cert_for_f_leq_0}.  Finally, according to Step \ref{expensive} of Algorithm \ref{alg: 1} or Step \ref{s: cert2} of Procedure \ref{a: cert_for_f_leq_0}, we obtain $\Lambda^{(i)}$ by updating $\Lambda^{(i-1)}$ which works out in full matrix form to be:
\begin{equation} \label{covid1} 
\Lambda^{(i)} = \Lambda^{(i-1)}[ I - e_{j_i} e_{j_i}^\top + \hlambda^-_{(j_i)}e_{j_i}^\top] + [\hlambda^+_{(j_i)}e_{j_i}^\top] =  \Lambda^{(i-1)}M_{(i)} + B_{(i)} \ , 
\end{equation} where $$M_{(i)} :=  I - e_{j_i} e_{j_i}^\top + \hlambda^-_{(j_i)}e_{j_i}^\top \ \ \mbox{and} \ \ B_{(i)} := \hlambda^+_{(j_i)}e_{j_i}^\top \ . $$
First notice that given $\Lambda^{(0)}$ and if we have computed and stored the matrix pairs \\
$(M_{(1)},B_{(1)}), \ldots, (M_{(k)}, B_{(k)})$, we can construct the final certificate matrix $\Lambda^{(k)}$ by inductively using \eqref{covid1}, and then construct the type-\1 certificate of infeasibility by the computation $\bar\lambda_{j_k} := \Lambda^{(k)} e_{j_k} + e_{j_k}$.  Next notice that it is sufficient to compute and store the vector-index pairs $(\hlambda_{(1)},j_1), ..., (\hlambda_{(k)},j_k)$ rather than the full matrices $(M_{(1)},B_{(1)}), \ldots, (M_{(k)}, B_{(k)})$ because for each iteration $i$ the information contained in the pair $(\hlambda_{(i)},j_i)$ is sufficient to construct the matrices $(M_{(i)},B_{(i)})$.  We will refer to the sequence $\{(\hlambda_{(i)}, j_i)\}_{i \in [k]}$ as the \emph{certificate-index sequence} of OEA.  

Based on the above discussion, we obtain OEA-MM from algorithm OEA with the following modifications:

\begin{enumerate} 
\item OEA-MM foregoes Steps \ref{expensive} and \ref{morerain} of Algorithm \ref{alg: 1} and Step \ref{s: cert2} Procedure \ref{a: cert_for_f_leq_0}. Accordingly, OEA-MM does not update the certificate matrix $\Lambda$. 

\item After implementing Step \ref{raining} of Algorithm \ref{alg: 1} and Step \ref{s: cert1} of Procedure \ref{a: cert_for_f_leq_0}, OEA-MM stores $\hat{\lambda}_j$ and $j$ as a pair $(\hat{\lambda}_j, j)$ in memory. These pairs comprise the certificate-index sequence $(\lambda_{(1)},j_1), ..., (\lambda_{(k)},j_k)$.

\end{enumerate}

It follows from Theorems \ref{t: main_t} and \ref{t: feas_guar} that the number $k$ of certificate-index pairs that need to be stored by OEA-MM satisfies $k = O(\max\{m^2 \ln(\lVert u - \ell \rVert/\tau(A,u)) , mn \ln(\lVert u - \ell \rVert/( \cc \tau(A,u)) \})$.

Finally, notice that Step \ref{step: infeasible} of Algorithm \ref{alg: 1} or Step \ref{s: cert3} of Procedure \ref{a: cert_for_f_leq_0} are where the type-\1 certificate of infeasibility is computed in the complete OEA.  A naive (and inefficient) way to accomplish the computation in these steps in OEA-MM would be to construct the full final certificate matrix $\Lambda^{(k)}$ by inductively using \eqref{covid1}, and then to construct the type-\1 certificate of infeasibility by the computation $\bar\lambda_{j_k} := \Lambda^{(k)} e_{j_k} + e_{j_k}$.  However, we can take advantage of the inductive recursion in the construction of $\Lambda^{(k)}$ in \eqref{covid1} to instead compute just the type-\1 certificate of infeasibility $\bar\lambda_{j_k}$ via a sequence of $k$ back-solves.  The detailed computation is presented in Procedure \ref{postc} below.  

\floatname{algorithm}{Procedure}
\begin{algorithm}[H] 
\caption{Construction of Type-L Certificate from (Stored) Certificate-Index Sequence}   \label{postc} 
\begin{algorithmic}[1] 
\INPUT{initial certificate matrix $\Lambda^{(0)}$ and certificate-index sequence $\{(\hlambda_{(i)},j_i) \}_{i \in [k]}$ .}
\medskip  
\State Initialize $w^k \leftarrow e_{j_k}$ and $z^k \leftarrow e_{j_k}$ . 
\For{$i = k:1$}
\State $w^{i-1} \leftarrow w^i + (\hlambda_{(i)}^- - e_{j_i})(e_{j_i}^{\top} w^i)$ . \label{a} 
\State $z^{i-1} \leftarrow \hlambda^+_{(i)} e_{j_i}^{\top} w^i + z^i$ . \label{b} 
\EndFor
\State Return $\bar\lambda := \Lambda^{(0)}w^0 + z^0$, and Stop. \label{L0} 
\end{algorithmic}
\end{algorithm} 

\noindent The following proposition establishes the correctness of Procedure \ref{postc}.  The proof of Proposition \ref{covid2} is given in Appendix \ref{vistalives}.

\begin{Prop} \label{covid2} 
The output of Procedure \ref{postc} satisfies $\bar\lambda = \Lambda^{(k)} e_{j_k} + e_{j_k}$ and hence is a type-\1 certificate of infeasibility.  \qed
\end{Prop}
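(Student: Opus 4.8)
The plan is to show that Procedure \ref{postc} correctly implements the recursion \eqref{covid1} ``in reverse'' by a backward induction on $i$, maintaining the invariant that $\Lambda^{(i)} w^i + z^i$ equals the target vector $\bar\lambda := \Lambda^{(k)} e_{j_k} + e_{j_k}$ for every $i$ from $k$ down to $0$. Since Step \ref{L0} returns $\Lambda^{(0)} w^0 + z^0$, establishing the invariant at $i=0$ yields exactly $\bar\lambda = \Lambda^{(k)} e_{j_k} + e_{j_k}$, and then Remark \ref{p: cert_inf_const-1} (or Corollary \ref{heatwave}, depending on which step of the complete OEA produced the certificate) tells us this is a type-\1 certificate of infeasibility, since $\Lambda^{(k)} e_{j_k} = \lambda_{j_k}$ is the final certificate for a lower bound on constraint $j_k$ that exceeds $u_{j_k}$.

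First I would verify the base case: at $i=k$ we have $w^k = e_{j_k}$ and $z^k = e_{j_k}$ by initialization, so $\Lambda^{(k)} w^k + z^k = \Lambda^{(k)} e_{j_k} + e_{j_k} = \bar\lambda$, as required. Next, for the inductive step, I would assume $\Lambda^{(i)} w^i + z^i = \bar\lambda$ and show $\Lambda^{(i-1)} w^{i-1} + z^{i-1} = \bar\lambda$. Using \eqref{covid1}, write $\Lambda^{(i)} = \Lambda^{(i-1)} M_{(i)} + B_{(i)}$ with $M_{(i)} = I - e_{j_i} e_{j_i}^\top + \hlambda^-_{(i)} e_{j_i}^\top$ and $B_{(i)} = \hlambda^+_{(i)} e_{j_i}^\top$. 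Then
\[
\bar\lambda = \Lambda^{(i)} w^i + z^i = \Lambda^{(i-1)} M_{(i)} w^i + B_{(i)} w^i + z^i \ ,
\]
so it suffices to check that $M_{(i)} w^i = w^{i-1}$ and $B_{(i)} w^i + z^i = z^{i-1}$. The first is immediate: $M_{(i)} w^i = w^i - e_{j_i}(e_{j_i}^\top w^i) + \hlambda^-_{(i)}(e_{j_i}^\top w^i) = w^i + (\hlambda^-_{(i)} - e_{j_i})(e_{j_i}^\top w^i)$, which is precisely Step \ref{a}. The second is also immediate: $B_{(i)} w^i + z^i = \hlambda^+_{(i)}(e_{j_i}^\top w^i) + z^i$, which is precisely Step \ref{b}. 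This completes the induction.

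There is no real obstacle here — the argument is a routine backward unrolling of the recursion \eqref{covid1}, with the only care needed being to keep straight that $e_{j_i}^\top w^i$ is a scalar so that the rank-one pieces of $M_{(i)}$ and $B_{(i)}$ act correctly on $w^i$. The one point worth stating explicitly is that the final certificate-index pair $(\hlambda_{(k)}, j_k)$ indeed encodes the last certificate-matrix update of the complete OEA, so that $\Lambda^{(k)} e_{j_k} + e_{j_k}$ matches the vector OEA returns in Step \ref{step: infeasible} of Algorithm \ref{alg: 1} or Step \ref{s: cert3} of Procedure \ref{a: cert_for_f_leq_0}; this was established in the discussion preceding \eqref{covid1}, where $j_k$ and $\hlambda_{(k)}$ were defined. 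With that identification, the type-\1 property follows directly from the corresponding correctness statement for the complete OEA (Corollary \ref{heatwave} / Remark \ref{p: cert_inf_const-1}, via Proposition \ref{p: alg_correct_f_leq_0}).
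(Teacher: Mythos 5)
Your proposal is correct and follows essentially the same route as the paper: the paper proves the same backward induction (stated for general $w^k,z^k$ via the recursion $\Lambda^{(i)}=\Lambda^{(i-1)}M_{(i)}+B_{(i)}$, with the induction left as ``straightforward'') and then instantiates it with $w^k=z^k=e_{j_k}$, exactly as you do with your invariant $\Lambda^{(i)}w^i+z^i=\bar\lambda$. Your explicit verification that Steps \ref{a} and \ref{b} realize $M_{(i)}w^i$ and $B_{(i)}w^i+z^i$ is just the spelled-out version of that same argument.
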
 

\noindent Based on this procedure, the third and final modification of OEA is as follows:

\begin{enumerate} 

\item[3.] Once Step \ref{step: infeasible} of Algorithm \ref{alg: 1} or Step \ref{s: cert3} of Procedure \ref{a: cert_for_f_leq_0}  must be executed, OEA-MM instead runs Procedure \ref{postc} to construct a Type-\1 certificate from the initial certificate matrix $\Lambda^{(0)}$ and the certificate-index sequence $\{(\lambda_{(i)},j_i)\}_{i \in [k]}$ stored in memory. 

\end{enumerate}

The computational complexity of implementing Steps \ref{a} and \ref{b} in Procedure \ref{postc} is $O(m)$, and Step \ref{L0} requires $O(m^2)$ operations. From the earlier discussion, the computational complexity of the number of iterations $k$ of Procedure \ref{postc} is $O(m^2)$. Thus the computational complexity of Procedure \ref{postc} is $O(m^3)$. Finally, because OEA-MM foregoes Step \ref{expensive} of Algorithm \ref{alg: 1}, the total computational complexity of OEA-MM is $O(m^3n \ln \frac{1}{\tau})$. 

The above analysis yields the following computational complexity result for OEA-MM.

\begin{Cor}\label{wowow} Let $\ell \in \mathbb{R}^m$ be certified lower bounds for $\poi$ with certificate matrix $\Lambda$.  Let $d:=e \in \mathbb{R}^m$.  If $\poi$ is infeasible, Algorithm OEA-MM with input $A$, $u$, $\ell$, $\Lambda$, and $d$ will stop and return a type-\1 certificate of infeasibility, with the same iteration bound as given in Theorem \ref{t: infeas_guar}.  Therefore the total computational complexity of Algorithm OEA-MM is $O(m^3n \ln \frac{1}{\tau})$ operations. 
\end{Cor}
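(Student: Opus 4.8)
The plan is to reduce the corollary to three ingredients, each essentially already available: (i) OEA-MM follows exactly the same trajectory as OEA in the iterates $(d,\ell)$ and in the quantities $(\hat\lambda_{(i)},j_i)$ it records, so that the iteration bound of Theorem \ref{t: infeas_guar} transfers unchanged; (ii) when infeasibility is detected, Procedure \ref{postc} reconstructs the very certificate that OEA would have returned, by Proposition \ref{covid2}; and (iii) a direct operation count using the fact that dropping the $\Lambda$-updates removes the only super-$O(mn)$ step per iteration.

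First I would establish the trajectory-equivalence. OEA-MM is obtained from OEA precisely by deleting Steps \ref{expensive} and \ref{morerain} of Algorithm \ref{alg: 1} and Step \ref{s: cert2} of Procedure \ref{a: cert_for_f_leq_0} (the steps that compute and overwrite $\Lambda$), while still executing Steps \ref{raining} and \ref{s: cert1}, whose output $\hat\lambda_j$ is now stored as the pair $(\hat\lambda_j,j)$ rather than immediately consumed. As already observed in the treatment of OEA-No-Alt, the matrix $\Lambda$ is ``write-only'' in OEA --- it never appears on the right-hand side of any computation other than its own update --- so the values of $d$, $\ell$, the index $j$ chosen in Step \ref{step: max_viol}, the tests $L_j>u_j$ in Step \ref{step: infeasible} and $f(d,\ell)\le 0$ in Step \ref{step: callprocedure}, and every quantity computed inside Procedure \ref{a: up_ellipse}, all depend only on $(A,u,d,\ell)$ and are therefore identical to those produced by OEA on the same input. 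In particular OEA-MM reaches its infeasibility-declaration point (Step \ref{step: infeasible} of Algorithm \ref{alg: 1}, or Step \ref{s: cert3} of Procedure \ref{a: cert_for_f_leq_0} reached through Procedure \ref{a: up_ellipse}) on exactly the same iteration as OEA, so the iteration bound of Theorem \ref{t: infeas_guar} holds verbatim, and the stored certificate-index sequence $\{(\hat\lambda_{(i)},j_i)\}_{i\in[k]}$ coincides with the one implicitly generated by OEA. I expect this to be the only genuinely non-mechanical step: it requires walking carefully through Algorithm \ref{alg: 1} and Procedures \ref{a: cert_for_f_leq_0} and \ref{a: up_ellipse} to confirm that $\Lambda$ and the deleted steps never feed back into any other computed object.

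Next I would handle the certificate. When OEA-MM reaches the point at which OEA would output a certificate, modification~3 of the algorithm instead calls Procedure \ref{postc} on $\Lambda^{(0)}$ and the stored sequence; by Proposition \ref{covid2} its output equals $\Lambda^{(k)}e_{j_k}+e_{j_k}$, which is exactly the type-\1 certificate OEA would return, hence a valid solution of $\alt$. Combined with the previous paragraph, this shows OEA-MM stops with a type-\1 certificate within the iteration bound of Theorem \ref{t: infeas_guar}.

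Finally I would count operations. In OEA the only step costing more than $O(mn)$ per iteration is the product $\Lambda\hat\lambda_j^-$ in Step \ref{expensive} (and its analogue in Procedure \ref{a: cert_for_f_leq_0}), which is $O(m^2)$; OEA-MM omits this and only stores an $m$-vector and an index, costing $O(m)$ (this is the extra memory that names the method, $k=O(m^2\ln\tfrac1\tau)$ such vectors in all), so each iteration is $O(mn)$. Together with the iteration bound from Theorem \ref{t: infeas_guar}, namely $O(m^2\ln\tfrac1\tau)$ under the standing simplification that $u$ and $u-\ell$ are $\Theta(1)$, the main loop costs $O(m^3 n\ln\tfrac1\tau)$. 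The post-processing Procedure \ref{postc} does $k=O(m^2\ln\tfrac1\tau)$ passes, each an $O(m)$ rank-one back-solve (Steps \ref{a}--\ref{b}), plus one $O(m^2)$ combination (Step \ref{L0}), for $O(m^3\ln\tfrac1\tau)$ total, which is dominated by the main loop since $n\ge 1$. Adding these gives the claimed $O(m^3 n\ln\tfrac1\tau)$ overall, completing the proof.
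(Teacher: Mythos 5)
Your proposal is correct and follows essentially the same route as the paper: it argues that deleting the $\Lambda$-updates leaves the $(d,\ell)$-trajectory and stopping tests of OEA unchanged (so Theorem \ref{t: infeas_guar} applies verbatim), invokes Proposition \ref{covid2} for the validity of the certificate produced by Procedure \ref{postc}, and tallies $O(mn)$ work per iteration plus the dominated post-processing cost to get $O(m^3 n \ln \frac{1}{\tau})$.
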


Finally, the computational complexity bounds for OEA-MM in the fourth row of Table \ref{comp} follow directly from the above observations. 



\appendix
\appendixpage

\section{Reducing the Operation Counts for an Iteration of the Ellipsoid Algorithm} \label{impl_Alt}

The first two subsections of this appendix present two different transformations for implementing the standard ellipsoid algorithm to solve the problem $\alt$, whose feasible region lies in the nullspace of $A$ (which we denote by $\nulla$).  The first transformation preserves Euclidean distances and hence aspects of the original conditioning of $\alt$, and is presented in Section \ref{eric1}.  The second transformation is guaranteed to reduce the operation counts per iteration of the ellipsoid algorithm from $O(mn)$ to $O(np)$ operations per iteration where $p:=m-n$, but the condition measures $\tau(\cdot)$ and $\rho(\cdot)$ are changed by the transformation.  This is presented in Section \ref{eric2}.  Section \ref{hirob} of this appendix shows how the transformation in Section \ref{eric2} can be applied when solving $\poi$ using either the standard ellipsoid algorithm, OEA, OEA-No-Alt, or OEA-MM. This transformation is guaranteed to reduce the operation counts per iteration of these versions by the factor $m/p$ -- but the condition measures $\tau(\cdot)$ and $\rho(\cdot)$ are changed by the transformation.  

\subsection{QR factorization to implement the iterations of the standard ellipsoid algorithm for solving $\alt$}\label{eric1}

This subsection considers a QR factorization approach to implement the standard ellipsoid method for solving $\alt$, that preserves Euclidean distances (and hence key features of problem geometry of $\alt$).  Specifically, we show how to parameterize the nullspace of $A$ (which we denote by $\nulla$) in a way that preserves Euclidean distances in $\alt$.  We initially compute the QR factorization of $A^\top$: we write
\[
A^\top = QR = [Y, Z] \left[ \begin{array}{cc} R_Y \\ 0 \end{array} \right] = Y R_Y \ ,
\]
where $Q$ is an $m \times m$ orthogonal matrix, partitioned into its first $n$ and last $p$ columns, and $R$ is an
$m \times n$ upper triangular matrix partitioned into its first $n$ and last $p$ rows. We then have
 $AZ = R_Y^\top Y^\top Z = 0$, and the columns of $Z$ form an orthonormal basis for the nullspace of $A$
 (recall that Assumption \ref{assu1} implies $A$ has rank $n$, so that its nullspace has dimension $p$). Then
 $\{ \lambda \in \mathbb{R}^m: A \lambda = 0 \} = \{ Z \mu: \mu \in \mathbb{R}^p \}$ and, since $Z$ has orthonormal columns, 
 the distance between two points $\lambda$ in $\nulla$ coincides with the
 distance between their corresponding $\mu$'s. This property preserves the Euclidean geometry of the problem.  
 
 To compute the factorization we calculate a sequence of elementary reflectors of the form $Q_i := I - 2 w_i w_i^\top$,
 where each $w_i$ is a unit vector, to reduce $A^\top$ to upper triangular form column by column, so that
 \[
 Q_n \cdots Q_2 Q_1 A^\top = R \ , \qquad Q = Q_1 Q_2 \cdots Q_n \ .
 \]
 This requires $O(m n^2)$ arithmetic operations. We can next if desired compute $Z$ column by column in 
 $O(mnp)$ operations. We can then apply the ellipsoid method to seek a point satisfying the transformed version of $\alt$ which is 
\begin{equation} \nonumber \label{doi} 
\malt:  \ \ \left\{ \begin{array}{rcl}
Z \mu & \ge& 0 \\
u^{\top}Z \mu& <& 0 \\
\|\mu\| & \le & 1 \ , \end{array} \right.
\end{equation} 
starting with the initial ellipsoid equal to the unit ball, and where we have added the unit ball constraint due to the positive homogeneity (of degree $1$) of the rest of the system.  At every iteration we need to evaluate the constraints at the current center.
If we have computed and stored $Z$ explicitly, this requires $O(mp)$ operations; alternatively we can augment $\mu$ with $n$ leading zeros and apply the $Q_i$'s sequentially to process the first two constraints of $\malt$ in $O(mn)$ operations. We compute the normal vector of a violated constraint by inspecting
$Z$, or alternatively by computing $Z^\top e_j$ or $Z^\top u$ in $O(mn)$ operations.  We then proceed to update the inverse shape matrix (or a factorization of
the shape matrix) in $O(p^2)$ operations. Choosing the better of the above alternatives, we see that each iteration of the standard ellipsoid algorithm applied to solve $\malt$ uses 
$O(mp)$ operations.  (Although $Q$ differs from the identity by a matrix of rank $n$, we do not see
how to reduce the operation count for updating the shape matrix when $n \ll m$.) 

Let us also see how to implement the above computational steps of the ellipsoid method directly in the space of $\lambda$'s. At a particular iteration, we suppose that we
have an ellipsoid in $\mu$-space defined by its center $\bar \mu$ and its inverse shape matrix $M$, and for simplicity we presume
that the quadratic inequality right-hand side is $1$). Then $Z$ transforms this ellipsoid into the space of $\lambda$'s as
\[
\{ \lambda \in \mathbb{R}^m: A \lambda = 0, (\lambda - \bar \lambda)^\top (Z M Z^\top) (\lambda - \bar \lambda) \leq 1 \} \ ,
\]
where $\bar \lambda := Z \bar \mu$. We can work with this center $\bar \lambda$ and the ``inverse shape matrix'' $\hat M := Z M Z^\top$.
(Of course, this matrix has rank $p$ and is not invertible.) Evaluating the constraints of $\malt$ at $\mu = \bar \mu$ corresponds
exactly to evaluating the constraints $\lambda \geq 0, \ u^\top \lambda < 0$ at $\lambda = \bar \lambda$. A constraint normal $v$ in $\mu$-space
corresponds to the constraint normal $\hat v$ in $\lambda$-space via the correspondence $v = Z^\top \hat v$. Furthermore, the update of the inverse shape matrix
\[
M_+ = \delta \left( M - \sigma \frac{M v v^\top M} {v^\top M v} \right)
\]
in $\mu$-space corresponds exactly to the update of the ``inverse shape matrix'' in $\lambda$-space given by
\[
\hat M_+ = \delta \left( \hat M - \sigma \frac{\hat M \hat v \hat v^\top \hat M} {\hat v^\top \hat M \hat v} \right) \ ,
\]
where the scalar parameters $\delta$ and $\sigma$ are chosen appropriate to $p$-dimensional space rather than
$m$-dimensional space. The initial ``inverse shape matrix'' is $ZZ^\top$ and can be computed in $O(m^2 \min\{n,p\})$  operations.  However,
updating $\hat M$ at each iteration requires $O(m^2)$ operations. We summarize the above in the following.

\begin{Prop}\label{mission} Using the QR factorization approach outlined above, the standard ellipsoid algorithm can be implemented to solve $\alt$ via the transformed problem $\malt$ starting with the unit ball in either $\nulla$ or $\mu$-space.  Euclidean distances are preserved by the transformations involved, and hence any Euclidean ball of radius $\bar r$ in $\nulla$ corresponds to a Euclidean ball of radius $\bar r$ in $\mu$-space.  Each iteration of the ellipsoid algorithm uses  $O(mp)$ operations in $\mu$-space and $O(m^2)$ operations in $\nulla$. \qed
\end{Prop}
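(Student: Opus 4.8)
The plan is to assemble the proposition from the structural facts and operation tallies developed in the discussion preceding it. First I would record the linear algebra of the factorization: since Assumption \ref{assu1} forces $\rank A = n$, the triangular block $R_Y$ is nonsingular, the last $p = m-n$ columns $Z$ of the orthogonal matrix $Q$ satisfy $AZ = 0$ and $Z^\top Z = I_p$, and hence the columns of $Z$ are an orthonormal basis for $\nulla$ with $\nulla = \{Z\mu : \mu \in \mathbb{R}^p\}$. (The factorization, via $n$ elementary reflectors, and the optional explicit formation of $Z$ are one-time costs of $O(mn^2)$ and $O(mnp)$ operations and do not enter the per-iteration count.)

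Second I would establish the equivalence of $\alt$ and $\malt$ together with the isometry claim. For equivalence: if $\lambda$ solves $\alt$ then $\lambda \in \nulla$ and $\lambda \ne 0$ (because $u^\top\lambda < 0$), so $\lambda = Z\mu$ with $\mu \ne 0$, and by positive homogeneity of the relations $Z\mu \ge 0$ and $u^\top Z\mu < 0$ we may rescale $\mu$ so that $\|\mu\| \le 1$, yielding a solution of $\malt$; conversely, if $\mu$ solves $\malt$ then $Z\mu$ solves $\alt$, using $AZ = 0$. For the isometry: because $Z^\top Z = I_p$ we have $\|Z\mu - Z\mu'\| = \|\mu - \mu'\|$, so $\mu \mapsto Z\mu$, with inverse $\lambda \mapsto Z^\top\lambda$ on $\nulla$, carries Euclidean balls of radius $\bar r$ in $\mu$-space onto Euclidean balls of radius $\bar r$ in $\nulla$ and vice versa. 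In particular the initial unit ball in $\mu$-space (center $0$, inverse shape matrix $I_p$) corresponds to the unit ball in $\nulla$, whose center is $0$ and whose ``inverse shape matrix'' is the orthogonal projector $ZZ^\top = I - YY^\top$, formable in $O(m^2\min\{n,p\})$ operations.

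Third I would tally the per-iteration cost in $\mu$-space: evaluating the constraints of $\malt$ at the current center costs $O(mp)$ when $Z$ is stored (or $O(mn)$ by applying the stored reflectors to the zero-padded center); reading off the normal of a violated constraint is an inspection of a row of $Z$ (or the one-time product $Z^\top u$); and the rank-one update of the $p\times p$ inverse shape matrix costs $O(p^2)$. Since $p \le m$, each iteration is $O(mp)$, starting from the unit ball.

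Finally I would treat the implementation carried out directly in $\nulla$, which is the step I expect to demand the most care. Here one maintains the center $\bar\lambda = Z\bar\mu$ and the rank-$p$ matrix $\hat M := ZMZ^\top$, with a violated-constraint normal $\hat v$ chosen to be one of the $e_j$ or $u$, so its $\mu$-space counterpart is $v = Z^\top\hat v$. The crux is to verify that the standard ellipsoid update is covariant under $M \mapsto ZMZ^\top$: one checks $\hat M\hat v = ZMv$, $\hat v^\top\hat M\hat v = v^\top Mv$, and, using the symmetry of $M$ and $\hat M$, $\hat M\hat v\hat v^\top\hat M = Z(Mvv^\top M)Z^\top$, whence $\hat M_+ = \delta\bigl(\hat M - \sigma\,\hat M\hat v\hat v^\top\hat M/\hat v^\top\hat M\hat v\bigr) = ZM_+Z^\top$, and likewise $\bar\lambda_+ = Z\bar\mu_+$; thus the $\lambda$-space iterates reproduce the $\mu$-space iterates exactly. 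Evaluating $\lambda \ge 0$ and $u^\top\lambda < 0$ at $\bar\lambda$ and selecting $\hat v$ cost $O(m)$, but the rank-one update of the full $m\times m$ matrix $\hat M$ costs $O(m^2)$, which dominates and gives the stated $O(m^2)$ per iteration in $\nulla$. Collecting these observations yields the proposition. The main obstacle is precisely this last covariance check — making sure the singular matrix $\hat M = ZMZ^\top$ faithfully mirrors the nonsingular $M$ throughout the recursion — but it reduces to the routine identities above.
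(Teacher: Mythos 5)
Your proposal is correct and follows essentially the same route as the paper, which states Proposition \ref{mission} as a summary of the discussion preceding it (QR factorization facts, isometry via $Z^\top Z = I_p$, the $O(mp)$ tally in $\mu$-space, and the $O(m^2)$ cost of updating $\hat M = ZMZ^\top$ in $\nulla$). Your explicit verification of the covariance identities $\hat M\hat v = ZMv$, $\hat v^\top\hat M\hat v = v^\top Mv$, and $\hat M_+ = ZM_+Z^\top$ simply spells out what the paper asserts when it says the $\lambda$-space update "corresponds exactly" to the $\mu$-space update.
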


\begin{Rem}\label{missionaccomplished} Because the QR factorization approach preserves Euclidean distances, it follows from  Proposition \ref{mission} and Lemma \ref{hotsunday} that the total number of operations required to compute a solution of $\alt$ using the standard ellipsoid algorithm in $\mu$-space is $O(mp^3 \ln( \frac{m}{\cc \tee}))$.  This bound is shown in the last column of the first row of Table \ref{comp}.\end{Rem}

\subsection{A matrix partition factorization to reduce the operations count of an iteration of the standard ellipsoid algorithm for solving $\alt$}\label{eric2}

This subsection considers a matrix partitioning approach to implement the standard ellipsoid algorithm for solving $\alt$ that reduces the operation counts of an iteration of the ellipsoid algorithm, albeit at the possible expense of worsening the conditioning of the transformed problem.  We will parameterize $\nulla$ in a way that will decrease the per-iteration operations of the standard ellipsoid algorithm for the unbalanced cases in which $n \ll m$ or $p \ll m$.  We can partition $A$ (assuming, for simplicity, that the leading
$n \times n$ submatrix is nonsingular) as $[ A_B, A_N]$. Then the nullspace of $A$ can be represented as
\[
\bigg\{ \lambda = \left( \begin{array}{cc} \lambda_B \\ \lambda_N \end{array} \right) = \left[ \begin{array}{cc} H \\ -I \end{array} \right] \mu : \mu \in \mathbb{R}^p \bigg\} \ ,
\]
where 
\begin{equation}\label{eq_H}
H := A_B^{-1} A_N \ .
\end{equation}
Computing $H$ takes $O(mn^2)$ operations, but needs to be done only once. Here $\mu \in \mathbb{R}^{p}$ again parametrizes the subspace, but now Euclidean distance is not preserved between corresponding pairs of points in $\nulla$ and $\mu$-space (unless $A_N = 0$).  Consider the following transformed version of $\alt$: 
\begin{align*} 
H \mu & \geq 0 \\
 - \mu & \geq 0 \\
 (u_B^{\top} A_B^{-1} A_N - u_N^{\top}) \mu & < 0 \\
 \| \mu \| & \leq 1 \ , 
\end{align*} 
where we have similarly partitioned $u$ into $(u_B; u_N)$, and have added the unit ball constraint due to the positive homogeneity (of degree $1$) of the rest of the system just as we did earlier.  We can apply the ellipsoid method to seek a point satisfying the above system starting with the initial ellipsoid equal to the unit ball centered at the origin.  For convenience, let us write the above transformed system as
\begin{equation*}\begin{array}{ccc}
\mmalt \left\{ \begin{array}{rrl} & -H \mu & \leq 0 \\
& \bar g^\top \mu & < 0 \\
&  \mu & \leq 0 \\
& \| \mu \| & \leq 1 \ , \end{array} \right. \end{array}
\end{equation*} 
where $\bar g = (A_N^\top A_B^{-\top} u_B - u_N)$, and notice that by defining the $ p \times (n+1)$ matrix $G := [- H^\top, \bar g]$, then $G^\top \mu$ is comprised of the left-hand side of the first $(p+1)$ \emph{general} inequalities of $\mmalt$ above. 
Since $\mmalt$ contains $m+1$ linear inequalities in $p$ variables in addition to the unit ball constraint, each iteration would normally require $O(mp)$ operations to
evaluate the constraints at the current center, and $O(p^2)$ operations to apply the inverse of the shape matrix to a constraint vector $v$ of the above system and to update the inverse or a Cholesky factorization of the $p \times p$ shape matrix.
However, because there are only $n+1$ general inequalities (which comprise the matrix $G$), both of these counts can be reduced to 
$O(np)$. This is immediately apparent for evaluating the constraints because evaluating $\mu \leq 0$ requires $O(p)$ operations. 
Below we show how to perform the other tasks in $O(np)$ operations when $n < p$; otherwise, we have $p^2 = O(np)$ already.

Recall from above that our initial ellipsoid is a unit ball; accordingly, the initial shape matrix is the identity matrix. At each iteration of the ellipsoid algorithm, we add a multiple of a rank-one matrix of the form $vv^{\top}$ to the current shape matrix and then positively rescale the shape matrix, where $v$ is a constraint vector from $\mmalt$.  Notice that we can presume that $v$ corresponds to one of the rows of the linear inequalities in $\mmalt$, since if the only violated constraint at the current center is the unit ball constraint $\|\mu\| \le 1$, then the center is indeed a solution of $\alt$ and we are done.  Hence, at every iteration the shape matrix is of the form
\begin{equation} \label{eq_specialB}
B = E + G D G^{\top},
\end{equation}
where $E$ and $D$ are positive semidefinite diagonal matrices of order $p$ and $n+1$, respectively. Note that $E$ is nonsingular, and we will without loss of generality assume that $D$ is as well because we can restrict our attention to the columns of $G$ that correspond to the positive diagonal entries of $D$. Clearly we can apply $B$ to any vector
at a cost of only $O(np)$ operations with this form. By the 
Inverse Matrix Modification Formula (commonly called the Sherman-Morrison-Woodbury formula,
but due to earlier work by Guttman and Duncan --- see Hager \cite{hager}) ,
\[
B^{-1} = E^{-1} - E^{-1} G (D^{-1} + G^{\top} E^{-1} G)^{-1} G^{\top} E^{-1} \ .
\]
Hence, if we have the inverse or a factorization of the inner matrix
\[
J := D^{-1} + G^{\top} E^{-1} G \ , 
\]
we can also apply $B^{-1}$ to any vector in only $O(np)$ operations. It remains to verify that we can update the shape matrix in $O(np)$ operations. At each iteration, we either increase a single entry of $D$ or $E$, and then scale the resulting matrix. If we keep the scalings separate, we have just a change
of a single entry of $D^{-1}$ or $E^{-1}$ in $J$, which leads to a rank-one update. We can therefore update the inverse or
a factorization of $J$ in just $O(n^2)$ operations. (The case when a column is added to $G$, corresponding to a diagonal entry
of $D$ increasing from zero, can also be handled in $O(n^2)$ operations, but we omit the details.)

Thus, all the steps of an iteration of the ellipsoid method applied to $\mmalt$ require just $O(np)$ operations. We summarize the above discussion in the following proposition. 

\begin{Prop}\label{tomcruise} Using the matrix partitioning approach outlined above, the standard ellipsoid algorithm can be implemented to solve $\alt$ via the transformed problem $\mmalt$ starting with the unit ball in $\mu$-space.  Each iteration of the ellipsoid algorithm in $\mu$-space can be implemented using $O(np)$ operations.  Euclidean distances are not preserved by the transformations involved, and hence a Euclidean ball of radius $\bar r$ in $\nulla$ does not necessarily correspond to a Euclidean ball of radius $\bar r$ in $\mu$-space.   \qed
\end{Prop}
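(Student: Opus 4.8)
The statement largely recapitulates the construction developed just above it, so the plan is to verify its three assertions -- faithfulness of the reformulation, the $O(np)$ per-iteration cost, and the failure of distance preservation -- in that order, assembling pieces already in hand.

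\textbf{Step 1: the reformulation is faithful.} First I would check that $\mmalt$, augmented by $\|\mu\|\le 1$, is solvable precisely when $\alt$ is, and that a solution of either yields a solution of the other. Because the leading block $A_B$ is nonsingular, $\{\lambda : A\lambda = 0\} = \{[H;-I]\mu : \mu\in\mathbb{R}^p\}$ with $H = A_B^{-1}A_N$ and correspondence $\mu = -\lambda_N$. Substituting $\lambda = [H;-I]\mu$ into $\lambda\ge 0$ and $u\adj\lambda < 0$ produces exactly the general inequalities $-H\mu\le 0$, $\mu\le 0$, $\bar g\adj\mu < 0$ of $\mmalt$, where $\bar g = A_N\adj A_B^{-\top}u_B - u_N$. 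These are positively homogeneous of degree one in $\mu$, and any feasible $\lambda\ne 0$ forces $\mu = -\lambda_N\ne 0$ (else $\lambda = 0$, contradicting $u\adj\lambda<0$), so rescaling $\mu$ to $\|\mu\|\le 1$ preserves feasibility; conversely any solution of $\mmalt$ maps back through $[H;-I]$ to a solution of $\alt$. Running the standard ellipsoid algorithm on $\mmalt$ from the initial ellipsoid equal to the unit ball thus solves $\alt$ whenever $\alt$ is feasible, producing a type-\1 certificate of infeasibility of $\poi$.

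\textbf{Step 2: each iteration costs $O(np)$ operations.} This is the crux. The matrix $H$, hence $G := [-H\adj, \bar g]\in\mathbb{R}^{p\times(n+1)}$, is formed once at cost $O(mn^2)$ and is not recomputed. Each iteration has three tasks. (i) Evaluating the constraints of $\mmalt$ at the current center $\bar\mu$ amounts to forming $G\adj\bar\mu$ ($O(np)$) and testing $\mu\le 0$ and $\|\mu\|\le 1$ ($O(p)$); the normal $v$ of a violated constraint is then a column $G_{\cdot k}$ of $G$ or a coordinate vector $e_i$, read off directly. (The unit-ball constraint never furnishes a cut: if the center violates only $\|\mu\|\le 1$, it already solves $\alt$.) (ii) Applying the inverse shape matrix $B^{-1}$ to $v$, which is needed both to update the center and to compute $v\adj B^{-1}v$: here I would establish the structural invariant that, up to a running scalar kept aside, $B = E + GDG\adj$, where $E\in\mathbb{R}^{p\times p}$ and $D\in\mathbb{R}^{(n+1)\times(n+1)}$ are diagonal and positive semidefinite. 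This holds initially ($B = I$, so $E = I$, $D = 0$) and is preserved by the ellipsoid update $B_+ = \delta\bigl(B + \sigma\,vv\adj/(v\adj B^{-1}v)\bigr)$ of the inverse shape matrix (with $\delta,\sigma$ the usual constants for $p$-dimensional space): since $v$ equals some $e_i$ or some $G_{\cdot k}$, the rank-one term is absorbed by incrementing one diagonal entry of $E$ or of $D$, and the factor $\delta$ folds into the running scalar. Given this form, the Sherman--Morrison--Woodbury identity yields
\[
B^{-1} = E^{-1} - E^{-1}G\,J^{-1}\,G\adj E^{-1}, \qquad J := D^{-1} + G\adj E^{-1}G \in \mathbb{R}^{(n+1)\times(n+1)},
\]
so that, once $J^{-1}$ (or a factorization of $J$) is available, $B^{-1}v$ costs $O(np)$ for the products with $G\adj E^{-1}$ and $E^{-1}G$ and $O(n^2)$ for the inner solve -- and $O(n^2)\subseteq O(np)$ when $n\le p$, while $p^2 = O(np)$ otherwise. (iii) Refreshing the data structure: each iteration changes a single diagonal entry of $D^{-1}$ or $E^{-1}$ inside $J$, a rank-one modification, so $J^{-1}$ (or its factorization) updates in $O(n^2) = O(np)$; the case of a diagonal entry of $D$ leaving zero -- a new column of $G$ becoming active -- is handled by a bordered rank-one update of the same cost. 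Adding (i)--(iii) gives $O(np)$ operations per iteration.

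\textbf{Step 3: distances are not preserved, and the main obstacle.} The map $\mu\mapsto\lambda = [H;-I]\mu$ satisfies $\|\lambda\|^2 = \|H\mu\|^2 + \|\mu\|^2$, so it is a Euclidean isometry onto $\nulla$ only when $H = 0$, that is $A_N = 0$; in general it carries a ball of radius $\bar r$ in $\mu$-space to an ellipsoid in $\nulla$ that is not a ball of radius $\bar r$, and symmetrically in reverse -- which is the final assertion of the proposition. The one genuinely substantive point in the argument is the structural invariant $B = E + GDG\adj$ of Step 2(ii): one must verify carefully that the ellipsoid update, including its rescaling and the division by $v\adj B^{-1}v$, never disturbs this form and that carrying the scalings separately stays consistent across all iterations. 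Everything else is bookkeeping of operation counts via Sherman--Morrison--Woodbury and rank-one matrix updates.
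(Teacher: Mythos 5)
Your proof is correct and follows essentially the same route as the paper: the proposition is proved there by the discussion preceding it, which uses exactly your ingredients — the parametrization $\lambda=[H;-I]\mu$ with $G=[-H^\top,\bar g]$, the invariant form $E+GDG^\top$ for the matrix receiving the rank-one cut updates, Sherman--Morrison--Woodbury with the inner matrix $J=D^{-1}+G^\top E^{-1}G$ maintained by $O(n^2)$ rank-one updates (including the new-column case), the observation that a center violating only $\|\mu\|\le 1$ already solves $\alt$, and the remark that the map is an isometry only when $A_N=0$. The only difference is that you spell out the equivalence between $\alt$ and $\mmalt$ explicitly, which the paper leaves implicit.
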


\begin{Rem}\label{tomcruiseaccomplished} Because the matrix partitioning approach requires $O(np)$ operations per iteration, the total number of operations required to compute a solution of $\alt$ using the standard ellipsoid algorithm applied to $\mmalt$ is $O(np^3 \ln( \frac{m}{\rho(\tilde A) \tau(\tilde A, u)}))$, where $\hat A := [I, A_B^{-1}A_N]$ is the transformed data matrix and $\tilde A$ is a rescaling of $\hat A$ so that its columns have unit norm.  Thus while this approach reduces the per-iteration operations by the factor $m/n$, the values of the problem condition measures $\rho(\cdot)$ and $\tau(\cdot)$ are changed by the transformation.
\end{Rem}

\subsection{A matrix partition factorization to reduce the operations count of an iteration of the standard ellipsoid algorithm for solving $\poi$}\label{hirob}

Let us now consider the matrix partition factorization of Section \ref{eric2} applied to using the ellipsoid algorithm to solve $\poi$.  Because $\poi$ contains $m$ inequalities in $n$ variables, a straightforward implementation of the ellipsoid algorithm uses $O(mn)$ operations
to evaluate the constraints at the current center, and $O(n^2)$ operations to apply the inverse of the shape matrix to the vector
$v$ of the current violated constraint ($v = a_j$ for some $j \in \{1, \ldots, m\}$), and to update the inverse or a factorization of the shape matrix. Applying the matrix partition approach to $\poi$ and using identical notation as in Section \ref{eric2}, we can write $\poi$ as
\begin{align*} 
& A_B^T x \leq u_B \\ 
& A_N^T x \leq u_N \ ,
\end{align*} 
and hence we can transform $\poi$ into the following system $\poip$ that is defined in terms of the linearly transformed variables $z := A_B^T x$: 
\begin{align*} 
\poip: \ \ \left \{
\begin{aligned} \nonumber
z & \leq u_B \\ 
H^T z & \leq u_N \ ,
\end{aligned} 
\right.
\end{align*} 
where $H$ is given by (\ref{eq_H}) above. Notice that the invertible linear transformation $z := A_B^T x$ is not guaranteed to preserve Euclidean distances. The system $\poip$ is comprised of the $n$ inequalities $z \leq u_B$ and $p$ general inequalities in $n$ variables. It follows immediately that we can evaluate the constraints at the current center in $O(np)$ operations. When  $p < n$, we can apply the inverse of the shape matrix to a constraint vector
and update the inverse or a Cholesky factorization of the shape matrix in $O(np)$ operations by using the techniques described above
for $B$ in (\ref{eq_specialB}). And when $n < p$, we have $n^2 = O(np)$ already. 

It follows from the above discussion that the standard ellipsoid algorithm, OEA-No-Alt, and OEA-MM all require $O(np)$ operations per iteration. To claim a similar result for OEA, we need to show how to update the certificate matrix 
 $\Lambda$ in $O(mp)$ operations per iteration. It is not hard to see that because of the special form of the reformulated constraint matrix, it suffices to store and update the last $p$ rows of $\Lambda$. If infeasibility is detected, the first $n$ rows of $\Lambda e_j$ can be obtained by using the defining equation $A \Lambda = -A$. As a result, we can perform each iteration of OEA in $O(mp)$ operations. We summarize this discussion in the following. 

\begin{Prop}\label{tomcruise2} Using the matrix partitioning approach above, the standard ellipsoid algorithm, OEA, OEA-No-Alt, and OEA-MM can be implemented to solve $\poi$ via the transformed problem $\poip$. Each iteration of the standard ellipsoid algorithm, OEA-No-Alt, and OEA-MM can be implemented using $O(np)$ operations, while each iteration of OEA can be implemented using $O(mp)$ operations. Euclidean distances are not preserved by the transformations involved, and hence a Euclidean ball of radius $\bar r$ in $z$-space does not necessarily correspond to a Euclidean ball of radius $\bar r$ in $x$-space. \end{Prop}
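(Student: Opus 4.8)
The plan is to prove the three assertions of Proposition~\ref{tomcruise2} in turn, reusing the per‑iteration cost analysis already developed for $\mmalt$ in Section~\ref{eric2}. Since $A$ has rank $n$ we may reorder columns so that the leading $n\times n$ block $A_B$ is nonsingular; then $z:=A_B^\top x$ is an invertible linear change of variables, and substituting $x=A_B^{-\top}z$ into the two blocks of $\poi$ produces precisely $\poip$ with $H=A_B^{-1}A_N$ as in \eqref{eq_H}. Hence $x$ is feasible for $\poi$ iff $z=A_B^\top x$ is feasible for $\poip$, and a solution $z$ of $\poip$ returns $x=A_B^{-\top}z$ for $\poi$; the column‑conic‑hull and (after a diagonal positive rescaling of $[\,I_n\mid H\,]$) unit‑norm hypotheses of Assumption~\ref{assu1} are inherited from those of $A$, so each of the four algorithms may legitimately be run on $\poip$ with appropriately transformed certified lower bounds and certificate matrix. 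Because $A_B$ need not be orthogonal, $z\mapsto A_B^{-\top}z$ carries a Euclidean ball to a generally non‑spherical ellipsoid, which gives the last assertion; I would note that $H$ is computed once in $O(mn^2)$ operations and so is not charged against any per‑iteration count.

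Next I would establish the $O(np)$ per‑iteration bound for the standard ellipsoid algorithm, and hence also for OEA‑No‑Alt and OEA‑MM, whose per‑iteration arithmetic outside of the $\Lambda$‑bookkeeping (for OEA‑MM this bookkeeping is just storing one $m$‑vector per iteration, which is $O(m)=O(np)$) is identical to the standard method run on $\poip$. Evaluating $z\le u_B$ costs $O(n)$ (reading off coordinates of the center) and evaluating $H^\top z\le u_N$ costs $O(np)$, so locating a violated constraint is $O(np)$. For the shape‑matrix work there are two cases: if $n\le p$ then $n^2=O(np)$ and the textbook $O(n^2)$ shape‑matrix update and inverse‑application already fit; if $p<n$, then every constraint normal that can arise is either a coordinate vector $e_i$ or a column of $H$, so the (inverse) shape matrix retains at every iteration the form \eqref{eq_specialB} with $G=H$, $E$ diagonal of order $n$ and $D$ diagonal of order $p$ (restricting to columns of $H$ with positive $D$‑entry), and the same holds for $B(d)=\hat A\,\diag(d)\,\hat A^\top$ in OEA because $\hat A=[\,I_n\mid H\,]$ and $\diag(d)$ is diagonal. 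Applying the inverse via the Inverse Matrix Modification Formula then costs $O(np)$ once the inner matrix $J:=D^{-1}+H^\top E^{-1}H$ is maintained, and each iteration perturbs a single diagonal entry of $D$ or $E$, a rank‑one change to $J$ updated in $O(p^2)=O(np)$ operations (with column insertions handled the same way). Thus every step of an iteration is $O(np)$.

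Finally I would show that the only extra work in OEA, maintaining the certificate matrix $\Lambda$, fits in $O(mp)$. Writing $\Lambda=[\Lambda_B;\Lambda_N]$ in the row partition induced by the basic/nonbasic split, the defining relation $\hat A\Lambda=-\hat A$ for the reformulated data gives $\Lambda_B=-[\,I_n\mid H\,]-H\Lambda_N$, so $\Lambda_B$ is determined by $\Lambda_N$ and it suffices to store and update only the last $p$ rows $\Lambda_N$. In Steps~\ref{raining}--\ref{expensive} of Algorithm~\ref{alg: 1}, the vector $\hat\lambda_j=\gamma_j(d,\ell)Dt(d,\ell)-D\hat A^\top B(d)^{-1}\hat a_j$ is formed in $O(np)$ operations using the block structure of $\hat A=[\,I_n\mid H\,]$ and the $O(np)$ application of $B(d)^{-1}$; then $\tilde\lambda_j=\Lambda\hat\lambda_j^-+\hat\lambda_j^+$ is formed by computing $\Lambda_N\hat\lambda_j^-$ directly ($O(mp)$) and recovering $\Lambda_B\hat\lambda_j^-=-[\,I_n\mid H\,]\hat\lambda_j^--H(\Lambda_N\hat\lambda_j^-)$ ($O(np)$); replacing the $j$-th column of the stored $\Lambda_N$ costs $O(p)$, and the invariant $\Lambda_B=-[\,I_n\mid H\,]-H\Lambda_N$ survives because $\tilde\lambda_j$ is again a certificate for constraint $j$. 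The same accounting covers the one‑time $\Lambda$-update in Procedure~\ref{a: cert_for_f_leq_0}. When infeasibility is detected and $\bar\lambda_j=\Lambda e_j+e_j$ is required, its last $p$ entries are the stored $j$-th column of $\Lambda_N$ and its first $n$ entries are recovered from $-[\,I_n\mid H\,]e_j-H(\Lambda_N e_j)$ in $O(np)$ operations---a one‑time cost at termination, not a per‑iteration cost. This gives $O(mp)$ per iteration for OEA and completes the proof.

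I expect the main obstacle to be the bookkeeping in the last paragraph: one must verify that the compressed representation $\Lambda_N$ together with the identity $\Lambda_B=-[\,I_n\mid H\,]-H\Lambda_N$ genuinely suffices for every place $\Lambda$ is read or written in both Algorithm~\ref{alg: 1} and Procedure~\ref{a: cert_for_f_leq_0}, and that the invariant is preserved by each column update. By contrast, the reformulation equivalence and the shape‑matrix argument are routine once the structural form \eqref{eq_specialB} is in hand.
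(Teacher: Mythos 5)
Your proposal is correct and follows essentially the same route as the paper: evaluate the constraints of $\poip$ in $O(np)$, exploit the fact that the shape matrix retains the structured form \eqref{eq_specialB} so the Inverse Matrix Modification Formula gives $O(np)$ shape-matrix work, and for OEA store only the last $p$ rows of $\Lambda$, recovering the first $n$ rows from the defining relation $A\Lambda=-A$ (your $\Lambda_B=-[\,I\mid H\,]-H\Lambda_N$) only when a certificate must be emitted. Your third paragraph merely spells out the bookkeeping that the paper's argument asserts ``it is not hard to see,'' so there is no substantive difference in approach.
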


\begin{Rem}\label{tomcruise2accomplished} Similar to Remark \ref{tomcruiseaccomplished}, the total number of operations required to compute a solution of $\poi$ using the standard ellipsoid algorithm, OEA-No-Alt, or OEA-MM applied to $\poip$ is $O(n^3p \ln( \frac{m}{\rho(\tilde A) \tau(\tilde A, u)}))$, and for OEA it is $O(mn^2p \ln( \frac{m}{\rho(\tilde A) \tau(\tilde A, u)}))$.  Here $\hat A := [I, H^\top] = [I, A_B^{-1}A_N]$ is the transformed data matrix and $\tilde A$ is a rescaling of $\hat A$ so that its columns have unit norm.  This approach reduces the per-iteration operations by the factor $m/p$, but the values of the problem condition measures $\rho(\cdot)$ and $\tau(\cdot)$ are changed by the transformation.
\end{Rem}

\section{Connection between $\tee$ and Renegar's distance to ill-posedness $\bar \rho(d)$}\label{jimbo}

The paper \cite{Reneg94} by Renegar develops a rather complete data-perturbation-theoretic condition measure theory for conic optimization using a data-dependent measure $\bar \rho$ that is naturally tied to a variety of geometric, analytic, numerical, and algorithmic properties of conic optimization problems.  We will show below that $\tee \ge \bar \rho(A,u)$, but first we need to 
establish the setting and
 then give a formal definition of $\bar \rho$.

The condition measure $\bar \rho$ is concerned with data-instance-specific conic systems and their state changes 
as the data is perturbed.  Restricting our discussion to the case of linear inequality systems of the form $\poi$, let us define the data $d = (A,u)  \in \mathbb{R}^{n \times m} \times \mathbb{R}^m$ and $\cp_d := \cp_{A,u} := \{ x \in \mathbb{R}^n : A^{\top}x \le u \}$. (We slightly abuse notation in calling the data $d$ in order to be consistent with the notation used in the condition measure theory.) The feasible and infeasible data instances are then defined as ${\cal F} := \{ d  \in  \mathbb{R}^{n \times m} \times \mathbb{R}^m : \cp_d  \ne \emptyset\} $ and ${\cal I} := \{ d  \in  \mathbb{R}^{n \times m} \times \mathbb{R}^m : \cp_d  = \emptyset\} $.  We will define the following norm on the data:
 $\|d\| := \|(A,u)\| := \max\{\|A\|_{1,2}, \|u\|_\infty \}$ where 
 recall that
 the operator norm of a matrix $M$ is 
  $\|M\|_{1,2} = \max_{\|v\|_1 =1} \|Mv\|_2$.  
  The condition measure $\bar \rho(d)$  is 
  then
  defined as:
$$ \bar \rho(d) :=  \left\{ \begin{array}{lr}  \inf_{d + \Delta d \in {\cal I}} \|\Delta d\|   & \mbox{if~} d \in {\cal F} \\ \\ \inf_{d + \Delta d \in {\cal F}} \|\Delta d\|   & \mbox{if~} d \in {\cal I}   \end{array}  \right. \ , $$
which is essentially the size of the smallest data perturbation $\Delta d = (\Delta A, \Delta u)$ for which $\cp_{d+\Delta d}$ changes from nonempty to empty, or {\it vice versa}.  $\bar \rho(d)$ is called the ``distance to ill-posedness'' because the optimal or nearly-optimal perturbed data $d+\Delta d$ lies on the set of ill-posed instances $\partial \cal F =\partial \cal I$.  It is simple to show that $\bar \rho(d)$ and $\tee$ are related as follows:
\begin{equation}\label{jimmy}
\tee \ge \bar \rho(A,u) \ . \end{equation} 
To see this, first consider the case when $d = (A,u) \in \cal F$, and define $\Delta A = 0$ and $\Delta u = (-\tee - \varepsilon) e $, and notice from the definition of $\tee$ that $\cp_{d + \Delta d} = \emptyset$ for all $\varepsilon >0$, whereby $\bar \rho(d) \le \|\Delta d\| = \tee + \varepsilon$, and it then follows that $\bar \rho(d) \le \tee$.  Next consider the case when $d = (A,u) \in \cal I$, and define $\Delta A = 0$ and $\Delta u = \tee e $, and notice from the definition of $\tee$ that $\cp_{d + \Delta d} \ne \emptyset$, whereby $\bar \rho(d) \le \|\Delta d\| = \tee $.

The inequality \eqref{jimmy} is the ``good'' direction for complexity of the ellipsoid method, since the computational complexity shown herein is $O(\ln(1/\tee)) \le O(\ln(1/\bar{\rho}(A,u)))$, which automatically bounds the computational complexity of the ellipsoid method in terms of $\bar \rho(A,u)$.

\section{Iteration Complexity of a Standard Ellipsoid Algorithm for Computing a Solution of $\alt$}\label{calt}

In the case when $\poi$ is infeasible, we derive a bound on the iteration complexity of computing a solution of $\alt$ using the standard ellipsoid algorithm, that depends only on $m$, $\tee$, and $\cc$.  Let $\nulla$ denote the nullspace of $A$ and let $\pa$ denote the $\ell_2$ projection matrix onto $\nulla$, namely $\pa = I - A^T(AA^T)^{-1}A$.  Because $\alt$ is positively homogeneous, we can augment $\alt$ by adding a unit $\ell_2$ ball constraint:
\begin{equation} \nonumber 
\galt:  \ \ \left\{ \begin{array}{rcl}A\lambda & =& 0 \\  
\lambda & \ge& 0 \\
u^{\top}\lambda & <& 0 \\
\|\lambda\| & \le & 1 \ , \end{array} \right.
\end{equation} 
and then solve $\galt$ using the standard ellipsoid algorithm in $\nulla$ (which is a $p := (m-n)$-dimensional subspace of $\mathbb{R}^m$) starting with the unit ball as the starting ellipsoid.  Appendix \ref{impl_Alt} describes how the algorithm can be implemented.  Let $\infeas$ denote the set of solutions of $\galt$.  Recall from \eqref{tau} and \eqref{fridays} the definitions of the condition measure $\tee$ which measures just how infeasible the system $\poi$ is, and $\cc$ which measures how ``bounded'' are the inequalities in $\poi$ in the sense of how much their normals must be perturbed in order to have a non-trivial recession cone.  

Let $B(c,r)$ denote the $\ell_2$ ball in $\mathbb{R}^m$ centered at $c$ with radius $r$.  Critical to bounding the number of iterations of the ellipsoid algorithm when solving $\galt$ is the existence of a ball $B(\lambda^c, r)$ that satisfies $$\lambda^c \in \nulla \ \ \ \ \mathrm{and} \ \ \ \ B(\lambda^c, r)\cap \nulla \subset \infeas \ . $$If this is the case, then the standard ellipsoid algorithm will need at most  $\lceil 2p(p+1)\ln(1/r) \rceil$ iterations to compute a solution of $\infeas$.  It turns out that we can bound the radius $r$ of the largest such ball from above and below in terms of $\tee$ and $\cc$ as the following lemma indicates.

\begin{Lem}\label{hotsunday} In the case when $\poi$ is infeasible, let $r^*$ be the supremum of $r$ for which there exists $\lambda^c$ satisfying
\begin{equation}\label{88} 
\lambda^c \in \nulla \ \ \ \mbox{and} \ \ \ B(\lambda^c, r)\cap \nulla \subset \infeas \ . 
\end{equation}
Then the following holds:
\begin{equation}\label{98}   \frac{\|\pa u\|}{\tee \sqrt{m}} + 1 \ \ \le \ \ \frac{1}{r^*} \ \ \le \ \ \left( \frac{\|\pa u\|}{\tee} + 1\right)\left( \frac{m}{\cc} + \sqrt{m} + 1 \right)   \ . \end{equation}                                                              
\end{Lem}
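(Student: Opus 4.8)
The plan is to convert the containment in \eqref{88} into explicit inequalities on the candidate center, prove the left bound by extracting an obstruction from an arbitrary admissible center, and prove the right bound by exhibiting a family of admissible centers. Fix $\lambda^c \in \nulla$ and $r>0$, and write $g := \pa u \in \nulla$; since $u-g\in\nulla^\perp$ we have $u^\top\lambda = g^\top\lambda$ for all $\lambda\in\nulla$. A generic point of $B(\lambda^c,r)\cap\nulla$ is $\lambda^c+w$ with $w\in\nulla$, $\|w\|\le r$, and over such $w$ one has $\min_i e_i^\top w = -r\|\pa e_i\|$ (since $e_i^\top w=(\pa e_i)^\top w$), $\max g^\top w = r\|g\|$, and $\max\|\lambda^c+w\| = \|\lambda^c\|+r$. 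Hence $B(\lambda^c,r)\cap\nulla\subseteq\infeas$ holds iff $\lambda^c_i \ge r\|\pa e_i\|$ for all $i$ (which forces $\lambda^c\ge0$; recall $\|\pa e_i\|\le1$), $g^\top\lambda^c < -r\|g\|$, and $\|\lambda^c\|\le 1-r$; also $\|\pa u\|>0$, since otherwise $u^\top\lambda\equiv0$ on $\nulla$ and $\infeas=\emptyset$, contradicting infeasibility of $\poi$. I will use two facts: (a) since $\poi$ is infeasible, applying Farkas' lemma to $A^\top x\le u-\theta e$ in the definition of $z^*$ gives $\tee = \max\{-u^\top\lambda : A\lambda=0,\ \lambda\ge0,\ \|\lambda\|_1=1\}$, attained at some $\hat\lambda$ with $-g^\top\hat\lambda=\tee$; and (b) under Assumption \ref{assu1}, $\cc>0$ and the Euclidean ball of radius $\cc$ lies in $\conv\{a_1,\dots,a_m\}$ (equivalently $\cc=\min_{\|v\|=1}\max_i a_i^\top v$), so applying this to $-Ae/\|Ae\|$ (or taking $\mu=0$ if $Ae=0$) produces $\mu\ge0$ with $A\mu=-Ae$ and $\|\mu\|_1\le\|Ae\|/\cc\le m/\cc$; then $\nu:=e+\mu$ lies in $\nulla$, has every coordinate $\ge1$, and $\|\nu\|\le\|e\|+\|\mu\|_1\le\sqrt m + m/\cc$.

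\textbf{Left inequality.} Let $(\lambda^c,r)$ be admissible. Then $\lambda^c\ge0$ and $\lambda^c\ne0$ (because $g^\top\lambda^c<0$), so $\bar\lambda := \lambda^c/\|\lambda^c\|_1$ is feasible for the program in (a), giving $-g^\top\lambda^c \le \tee\,\|\lambda^c\|_1$. Combining with $-g^\top\lambda^c \ge r\|\pa u\|$ and $\|\lambda^c\|_1 \le \sqrt m\,\|\lambda^c\| \le \sqrt m\,(1-r)$ yields $r\|\pa u\| \le \tee\sqrt m\,(1-r)$, i.e. $1/r \ge \|\pa u\|/(\tee\sqrt m) + 1$. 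Letting $r\uparrow r^*$ gives the left inequality of \eqref{98}.

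\textbf{Right inequality.} Set $\kappa := \|\pa u\|/\tee + 1$ and $M := m/\cc + \sqrt m + 1$, and fix any $r < 1/(\kappa M)$. Take $\lambda^c := \alpha\hat\lambda + r\nu$ with $\alpha := (\|\pa u\|/\tee)\,rM + \varepsilon$ for a sufficiently small $\varepsilon>0$. Then $\lambda^c\in\nulla$; coordinatewise $\lambda^c_i \ge r\nu_i \ge r \ge r\|\pa e_i\|$; using $-g^\top\hat\lambda = \tee$, $g^\top\nu \le \|g\|\,\|\nu\| \le \|\pa u\|(\sqrt m + m/\cc)$, and $\alpha\tee = \|\pa u\|rM+\varepsilon\tee$ one gets $g^\top\lambda^c \le -\alpha\tee + r\|\pa u\|(\sqrt m + m/\cc) = -r\|\pa u\| - \varepsilon\tee < -r\|\pa u\| = -r\|g\|$; and, since $\|\hat\lambda\|\le\|\hat\lambda\|_1=1$ and $(\|\pa u\|/\tee)M + (\sqrt m + m/\cc) + 1 = M\kappa$, we have $\|\lambda^c\| + r \le \alpha + r\|\nu\| + r \le rM\kappa + \varepsilon < 1$ for $\varepsilon$ small. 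Thus $(\lambda^c,r)$ is admissible, so $r^*\ge r$; letting $r\uparrow 1/(\kappa M)$ gives $1/r^* \le \kappa M$, the right inequality of \eqref{98}.

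\textbf{Where the difficulty lies.} The reduction to the three inequalities and the $\varepsilon$-bookkeeping are routine. The one delicate point is the construction in (b): the null vector $\nu$ must be taken \emph{un-normalized}, so that all its coordinates are $\ge1$ (which is what lets the $r\nu$ term meet the coordinate constraints with margin exactly $r$) while simultaneously $\|\nu\|\le\sqrt m + m/\cc$ — the $\sqrt m$ coming from $\|e\|$ and the $m/\cc$ from the distance-to-unboundedness perturbation $\mu$ — since this is precisely what makes the constant $m/\cc + \sqrt m + 1$ appear in the stated bound rather than a weaker $m + m/\cc$.
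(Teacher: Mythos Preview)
Your proof is correct. The left inequality follows the paper's argument essentially verbatim. For the right inequality, however, you take a genuinely cleaner route than the paper.

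The paper constructs its strictly-positive null vector through a perturbation argument: it picks $\rho<\cc$, perturbs $A$ by $\Delta A=\tfrac{\rho}{\|A\be\|}A\be\,e^\top$, invokes the definition of $\cc$ to obtain a nonnegative $\lambda$ with $(A+\Delta A)\lambda=0$, $\|\lambda\|=1$, translates back to $\tilde\lambda\in\nulla$ with each $\tilde\lambda_j\ge \rho e^\top\lambda/(m\|A\be\|)$, then forms $\bar\lambda=\check\lambda+\alpha\tilde\lambda$ and reverse-engineers $\alpha$ and $\bar r$. The final estimate on $1/r^*$ emerges only after a long chain of inequalities and a limit $\rho\to\cc$.

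You instead first prove the equivalent geometric characterization $\cc=\min_{\|v\|=1}\max_i a_i^\top v$, i.e.\ $B(0,\cc)\subseteq\conv\{a_i\}$, and use it \emph{directly} to write $-Ae$ as $A\mu$ with $\mu\ge0$, $\|\mu\|_1\le m/\cc$, giving the null vector $\nu=e+\mu$ with $\nu_i\ge1$ and $\|\nu\|\le\sqrt m+m/\cc$. Once this $\nu$ is in hand, the center $\alpha\hat\lambda+r\nu$ and the verification of the three admissibility inequalities are essentially one line each, and the constant $m/\cc+\sqrt m+1$ appears transparently as $\|\nu\|+1$. Your approach isolates the single geometric fact that drives the bound and avoids the auxiliary quantities and the limiting argument in the paper; the paper's route, by contrast, stays closer to the operator-norm definition \eqref{fridays} of $\cc$ and never states the convex-hull characterization explicitly.
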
 \qed

\noindent For clarity, these bounds can be weakened to:

\begin{equation}\label{89}   \frac{\|\pa u\|}{\tee\sqrt{m}} \  \le \ \frac{1}{r^*} - 1 \ \le \ \frac{(4m+1)\|\pa u\|}{\tee\cc}  \  \end{equation}
(this follows since $\cc \le 1$ and $\tee \le \|\pa u\|$, see below), whereby we see that $\tee$ approximates $r^*$ to within a factor of $m/\cc$.  By combining Lemma \ref{hotsunday} with Proposition \ref{mission}, we obtain the following bound for the standard ellipsoid algorithm for computing a solution of $\infeas$, where $p:=m-n$ is the dimension of $\nulla$.

\begin{Th}\label{warmsunday} In the case when $\poi$ is infeasible, the number of iterations of the standard ellipsoid algorithm applied to solve $\infeas$ starting with the unit ball in $\nulla$ is at most
$$\left\lceil 2p(p+1)\ln\left(\frac{(4m+2)\|\pa u\|}{\tee\cc}  \right)\right\rceil  \ . $$ 
Furthermore, each iteration of the ellipsoid method can be implemented using $O(mp)$ operations.                                             
\end{Th}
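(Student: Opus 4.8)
The plan is to obtain this theorem as a short corollary of Lemma~\ref{hotsunday} and Proposition~\ref{mission}. Lemma~\ref{hotsunday} controls the radius $r^*$ of the largest ball of solutions of $\galt$ sitting inside $\nulla$, and this radius governs the iteration count through the usual volumetric convergence bound of the ellipsoid method; Proposition~\ref{mission} then supplies the $O(mp)$ per-iteration operation count of the $\mu$-space implementation. So after assembling these two facts, the only remaining work is an arithmetic weakening to match the advertised constant.

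For the iteration count I would invoke the standard ellipsoid convergence estimate recorded just before Lemma~\ref{hotsunday}: if $\infeas$ contains a set $B(\lambda^c,r)\cap\nulla$ with $\lambda^c\in\nulla$, then the ellipsoid algorithm started from the unit ball in the $p$-dimensional space $\nulla$ finds a point of $\infeas$ within $\lceil 2p(p+1)\ln(1/r)\rceil$ iterations. Since $\poi$ is infeasible, $\alt$ (hence $\galt$) is solvable, so $\infeas\ne\emptyset$ and, by Lemma~\ref{hotsunday}, such a ball exists for every $r<r^*$ with $r^*>0$. Hence the iteration count is at most $\lceil 2p(p+1)\ln(1/r)\rceil$ for every $r<r^*$, and taking $r\uparrow r^*$ bounds it by $\lceil 2p(p+1)\ln(1/r^*)\rceil$. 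It then remains to bound $1/r^*$ from above, which is exactly what the weakened inequality \eqref{89} gives: $\tfrac{1}{r^*}\le 1+\tfrac{(4m+1)\|\pa u\|}{\tee\cc}$. Combining this with the bounds $\cc\le1$ and $\tee\le\|\pa u\|$ (so that $\|\pa u\|/(\tee\cc)\ge1$ and the ``$+1$'' is absorbed into the leading fraction) yields $\tfrac{1}{r^*}\le\tfrac{(4m+2)\|\pa u\|}{\tee\cc}$, and since $\ln$ is increasing we obtain the stated bound $\bigl\lceil 2p(p+1)\ln\bigl(\tfrac{(4m+2)\|\pa u\|}{\tee\cc}\bigr)\bigr\rceil$.

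For the $O(mp)$ per-iteration cost I would appeal to Proposition~\ref{mission}. The augmented system $\galt$ is simply $\alt$ together with the $\ell_2$ unit-ball constraint, and running the standard ellipsoid algorithm on it from the unit ball is precisely the $\mu$-space scheme of Proposition~\ref{mission} applied to $\malt$, which uses $O(mp)$ operations per iteration; the one-time $O(mn^2)$ expense of forming the QR factorization $A^\top=YR_Y$ does not enter the per-iteration count. Moreover, because the parametrization $\mu\mapsto Z\mu$ is a Euclidean isometry onto $\nulla$ (again Proposition~\ref{mission}), the ball $B(\lambda^c,r)\cap\nulla\subseteq\infeas$ from the previous paragraph corresponds to a Euclidean ball of radius $r$ in $\mu$-space, so running the algorithm in $\mu$-space is governed by the very same iteration bound. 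Combining the two parts completes the proof.

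I do not anticipate a genuine difficulty, since the substantive work is already packaged in Lemma~\ref{hotsunday} (the two-sided estimate of $r^*$) and Proposition~\ref{mission} (the $O(mp)$ implementation). The points that need a little care are: (i) identifying $\galt$ with the unit-ball-augmented $\alt$ so that the $O(mp)$ count of Proposition~\ref{mission} applies verbatim; (ii) the isometry remark that transfers the iteration bound derived in $\nulla$ to the $\mu$-space implementation; and (iii) the harmless weakening $1+\tfrac{(4m+1)\|\pa u\|}{\tee\cc}\le\tfrac{(4m+2)\|\pa u\|}{\tee\cc}$, which rests on $\cc\le1$ and $\tee\le\|\pa u\|$. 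The supremum $r^*$ need not be attained, but the ``$r\uparrow r^*$'' step sidesteps that.
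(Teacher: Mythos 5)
Your proposal is correct and follows essentially the same route as the paper: the theorem is obtained by combining the standard volumetric iteration bound with the lower bound on $r^*$ from the weakened inequality \eqref{89} (absorbing the $+1$ via $\cc \le 1$ and $\tee \le \|\pa u\|$), and the $O(mp)$ per-iteration count from Proposition \ref{mission}, including the isometry remark that lets the $\mu$-space implementation inherit the bound derived in $\nulla$.
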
 \qed\medskip

In the case when $\poi$ is infeasible, the linear optimization problem describing $\tee$, together with its dual problem, can be written as:
\begin{equation}\label{82} 
\begin{array}{rrclrrrll}  \tee \ \ = \ \  \displaystyle\min_{x,\tau} & \tau &   &  & = & \displaystyle\max_{\lambda} & -u^T\lambda \\
 \ \ \ \ \ \ \ \mathrm{s.t.} &A^Tx &\le& u + e \tau & &  \ \ \ \ \ \ \ \mathrm{s.t.} &A\lambda &=& 0 \\
  & & & & & &e^T\lambda  & = & 1 \\
 & & & & & &\lambda  &\ge& 0 \ ,
\end{array}
\end{equation}
where we call the left-side and right-side problems above the primal and dual, respectively.  Notice that $(x,\tau) = ((AA^T)^{-1}Au, \|\pa u\|)$ is feasible for the primal problem, and hence $\tee \le \|\pa u\|$.  Thus a natural condition measure for infeasibility is $\|\pa u\|/\tee$ which is (positively) scale invariant and satisfies $\|\pa u\|/\tee \in [1, \infty)$.  Also, because the columns of $A$ have unit norm, it follows that $\|A\|_{1,2} = 1$ and $\cc \le 1$ and hence $1/\cc$ is a natural condition measure that satisfies $1/\cc \in [1, \infty)$.  

If we normalize $u$ for discussion's sake so that $\|\pa u\| = 1$ and we ignore constants, then the left inequality in \eqref{89} states that $\tee\sqrt{m}$ is an upper bound on the largest ball radius of a ball in $\infeas$.  And the right inequality in \eqref{89} states that the largest such ball radius $r^*$ is only guaranteed to be as large as $\tee\cc/m$.  Hence when $m$ is of the same order as $p = m-n$ and $m / \cc \gg 1 / \tee$, then OEA will have a better complexity bound than the bound for the standard ellipsoid algorithm in Theorem \ref{warmsunday}.  Indeed because the complexity bound for OEA in the infeasible case relies only on $\tee$ and $m$, and has no dependence on $\cc$, it can outperform the standard ellipsoid algorithm in terms of its iteration complexity bound.  \medskip

\noindent {\bf Proof of Lemma \ref{hotsunday}:}   To prove the left side of \eqref{98}, let $\lambda, r^*$ satisfy the limiting supremum in \eqref{88}, which therefore satisfy $A\lambda = 0$, $\|\lambda\|+r^* = 1$, and $$Ad=0 \ , \ \|d\| \le 1 \ \implies \ u^T(\lambda + r^* d) \le 0 \ \mathrm{and} \ \lambda + r^* d \ge 0 \ . $$
It therefore follows using $d = \pa u /\|\pa u\|$ that $-u^T\lambda \ge r^* u^T d = r^* u^T\pa u/\|\pa u\| = r^* \|\pa u\|$.  Furthermore, setting $\lambda' := \lambda/e^T\lambda$, it follows that $\lambda'$ is feasible for the dual problem in \eqref{82} and hence $$\tee \ge -u^T\lambda' \ge  \frac{r^* \|\pa u\|}{e^T\lambda} \ge \frac{r^* \|\pa u\|}{\sqrt{m}\|\lambda\|} = \frac{r^* \|\pa u\|}{\sqrt{m}(1-r^*)} \ , $$ and rearranging the terms above yields the left side of \eqref{98}.

To prove the right side of \eqref{98}, we proceed as follows.  Define $\be := \tfrac{1}{m}e$, and for $\rho < \cc$ define the perturbation matrix $\Delta A := \tfrac{\rho}{\|A\be\|}A\be e^T$.  It then follows that $\|\Delta A\|_{1,2} = \rho < \cc$, whereby from the definition of $\cc$ in \eqref{fridays} there does not exist $v \ne 0$ satisfying $[A + \Delta A]^Tv \le 0$.  It then follows from a theorem of the alternative that there exists $\lambda \in \mathbb{R}^m$ satisfying:
\begin{equation}\label{rollingstones}
[A + \Delta A]\lambda = 0 \ , \ \lambda \ge 0 \ , \ \| \lambda\| = 1 \ . 
\end{equation}
Define $\tlambda := \lambda + \tfrac{\rho e^T\lambda}{\|A\be\|}\be $, and it follows from \eqref{rollingstones} that 
\begin{equation}\label{beatles}
A \tlambda = 0 \ , \ \tlambda \ge 0 \ , \ \| \tlambda\| \le 1 + \frac{\rho e^T\lambda}{\sqrt{m}\|A\be\|} \ , \ \tlambda_j \ge \frac{\rho e^T\lambda}{m \|A\be\|} \ \ \mathrm{for} \  j \in [m] \ . 
\end{equation}
Next let $\clambda$ solve the dual problem in \eqref{82}, whereby $\clambda$ satisfies
\begin{equation}\label{who}
A \clambda = 0 \ , \ \clambda \ge 0 \ , \ e^T\clambda =1 \ , \ -u^T\clambda = \tee \ . 
\end{equation}
Now the idea is to take a nonnegative combination of $\clambda$ and $\tlambda$ in a way that guarantees that resulting combination satisfies the inequalities of $\infeas$ as much as possible.  We accomplish this by defining:
\begin{equation}\label{ledzeppelin}
\blambda := \clambda + \alpha \tlambda \ \ \ \mathrm{and} \ \ \ \bar r:= \frac{\alpha \rho e^T\lambda}{m\|A\be\|} \ , \ \ \mathrm{where~~} \alpha := \frac{\tee m \|A\be\|}{\|\pa u\|(m\|\tlambda\|\|A\be\| + \rho e^T\lambda)} \ . 
\end{equation}
Notice that $A\blambda = 0$.  We will now show that $\blambda$ and $\bar r$ also satisfy:
\begin{equation}\label{animals}
Ad=0 \ , \ \|d\| \le 1 \ \implies \  \blambda + \bar r d \ge 0 \ \mathrm{and}  \ u^T(\blambda + \bar r d) \le 0 \ . 
\end{equation} This then implies that upon rescaling $\blambda$ and $\bar r$ by $(\|\blambda\| + \bar r)$ to  
\begin{equation}\label{kinks}
\blambda' := \frac{\blambda}{\|\blambda\| + \bar r} \ \ \ \ \mathrm{and} \ \ \ \ \bar r' := \frac{\bar r}{\|\blambda\| + \bar r}  \ ,
\end{equation} for all sufficiently small positive $\varepsilon$ it holds that 
\begin{equation}\label{doors}\blambda' \in \nulla \ \ \ \ \mathrm{and} \ \ \ \ B(\blambda', \bar r' - \varepsilon) \cap \nulla \subset \infeas \ , \end{equation} and the proof will be completed by then showing a relevant lower bound on the value of $\bar r'$.  Let us therefore now show \eqref{animals}.  It follows from \eqref{beatles}, \eqref{who}, and \eqref{ledzeppelin} that $\blambda_j \ge \alpha \tlambda_j \ge \frac{\alpha \rho e^T\lambda}{m \|A\be\|} = \bar r$, and hence for $d$ satisfying $Ad=0$ and $\|d\| \le 1$ it holds that $\blambda_j  + \bar r d_j \ge \bar r (1-\|d\|) \ge 0$ and hence $\blambda  + \bar r d \ge 0$.  Again considering $d$ satisfying $Ad=0$ and $\|d\| \le 1$, it also follows from \eqref{beatles}, \eqref{who}, and \eqref{ledzeppelin} that $$\begin{array}{rcl}
u^T(\blambda + \bar r d) &=& u^T(\clambda + \alpha \tlambda + \bar r d) \\
&=& -\tee +  \alpha u^T\pa\tlambda + \bar r u^T\pa d \\
&\le& -\tee +  \alpha \|\pa u\|\|\tlambda\| + \bar r \|\pa u\|  \\
&=& -\tee +  \alpha \|\pa u\|\|\tlambda\| + \frac{\alpha \rho e^T\lambda \|\pa u\|}{m\|A\be\|}   \\
&=& -\tee +  \alpha \left(\frac{\|\pa u\|(m\|\tlambda\|\|A\be\| + \rho e^T\lambda)}{ m \|A\be\|}\right) \\
&=& -\tee +  \tee = 0 \ ,  \\
\end{array}
$$thus demonstrating \eqref{animals}.  Applying the rescaling in \eqref{kinks}, it indeed follows that \eqref{doors} holds.  Therefore $r^* \ge \bar r' - \varepsilon$ for all sufficiently small positive $\varepsilon$, and hence $r^* \ge \bar r' $ and 
$$\begin{array}{rcl} \displaystyle\frac{1}{r^*} \le \frac{1}{\bar r'} &=& 1 +  \displaystyle\frac{\|\blambda\|}{\bar r} \medskip \\ 
&\le& 1 +  \displaystyle\frac{\|\clambda\|+ \alpha\|\tlambda\|}{\bar r}  \medskip \\
&\le& 1 + \displaystyle\frac{1}{\bar r} + \displaystyle\frac{\alpha}{\bar r}\left( 1 + \frac{\rho e^T\lambda}{\sqrt{m}\|A\be\|} \right) \medskip \\
&=& 1 + \displaystyle\frac{1}{\bar r} + \displaystyle\frac{\alpha}{\bar r} + \frac{\alpha\rho e^T\lambda}{\bar r \sqrt{m}\|A\be\|} \medskip \\
&=& 1 + \displaystyle\frac{m\|A\be\|}{\alpha \rho e^T\lambda} + \displaystyle\frac{m\|A\be\|}{\rho e^T\lambda}  + \sqrt{m} \medskip \\
&=& 1 + \sqrt{m} +  \displaystyle\frac{m\|A\be\|}{\rho e^T\lambda} + \displaystyle\frac{m\|A\be\|\|\pa u\|}{\tee\rho e^T\lambda}\left( \frac{m\|\tlambda\|\|A\be\| + \rho e^T\lambda}{m\|A\be\|} \right) \medskip \\
&=& 1 + \sqrt{m} +  \displaystyle\frac{m\|A\be\|}{\rho e^T\lambda} + \displaystyle\frac{m\|A\be\|\|\pa u\|}{\tee\rho e^T\lambda}\left(\|\tlambda\|+ \frac{\rho e^T\lambda}{m\|A\be\|} \right) \medskip \\
&=& 1 + \sqrt{m} +  \displaystyle\frac{m\|A\be\|}{\rho e^T\lambda} + \displaystyle\frac{\|\pa u\|}{\tee}\left(\frac{m\|A\be\|}{\rho e^T\lambda}\|\tlambda\|+ 1 \right) \medskip \\
&\le& 1 + \sqrt{m} +  \displaystyle\frac{m\|A\be\|}{\rho e^T\lambda} + \displaystyle\frac{\|\pa u\|}{\tee} + \displaystyle\frac{\|\pa u\|m \|A\be\|}{\tee \rho e^T\lambda}\left( 1 + \frac{\rho e^T\lambda}{\sqrt{m}\|A\be\|} \right)  \medskip \\
&=& 1 + \sqrt{m} +  \displaystyle\frac{m\|A\be\|}{\rho e^T\lambda} + \displaystyle\frac{\|\pa u\|}{\tee}\left( 1 + \sqrt{m} +\frac{m\|A\be\|}{\rho e^T\lambda} \right)  \medskip \\
&=& \left( 1 + \sqrt{m} +  \displaystyle\frac{m\|A\be\|}{\rho e^T\lambda}\right) \left(1 + \displaystyle\frac{\|\pa u\|}{\tee} \right)  \medskip \\
&\le& \left( 1 + \sqrt{m} +  \displaystyle\frac{m}{\rho }\right) \left(1 + \displaystyle\frac{\|\pa u\|}{\tee} \right) \ . 
\end{array}$$
As this inequality holds for all $\rho < \cc$, it also holds for $\rho = \cc$, proving the right-side inequality in \eqref{98}. \qed
\section{Two Update Formulas} 
Propositions \ref{p: d_up} and \ref{p: ell_up} below are straightforward results that follow from the Sherman-Morrison formula and algebraic manipulation. 


\begin{Prop} \label{p: d_up} 
Suppose $d \in \mathbb{R}^m_{++}$ and $\ell \in \mathbb{R}^m$ satisfy $f(d,\ell) = 1$. Let $j \in [m]$, $\delta \in \mathbb{R}_+$, and $\tilde{d} = d + \delta e_j.$ Then, 
\begin{equation} \label{e: u_d_B}
B(\tilde{d})^{-1} = B^{-1} -  \left( \frac{\delta}{1+\delta \gamma_j(d,\ell)^2} \right) B^{-1} a_j a_j^T B^{-1},
\end{equation}
\begin{equation} \label{e: u_d_t}
t(\tilde{d},\ell) = t -  \left( \frac{\delta}{1+\delta \gamma_j(d,\ell)^2} \right) t_j A^T B^{-1} a_j,
\end{equation}
and
\begin{equation} \label{e: u_d_f} 
f(\tilde{d},\ell) = 1 + \delta v_j(\ell)^2 - \left( \frac{\delta}{1+\delta \gamma_j(d,\ell)^2} \right) t_j(d,\ell)^2. 
\end{equation} 
\end{Prop}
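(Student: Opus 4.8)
The plan is to derive all three formulas from the Sherman–Morrison rank-one update formula applied to $B(\tilde d) = ADA^\top + \delta a_j a_j^\top = B(d) + \delta a_j a_j^\top$, together with the hypothesis $f(d,\ell)=1$ and the definition $\gamma_j(d,\ell)^2 = a_j^\top B(d)^{-1} a_j$ (using $f(d,\ell)=1$ in the slab-radius characterization \eqref{running}). Throughout I would abbreviate $B := B(d)$, $t := t(d,\ell)$, $r := r(\ell)$, $v := v(\ell)$, and write $\gamma_j := \gamma_j(d,\ell)$, so that $a_j^\top B^{-1} a_j = \gamma_j^2$.

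\textbf{Step 1 (formula for $B(\tilde d)^{-1}$).} Apply Sherman–Morrison directly: since $\tilde D = D + \delta e_j e_j^\top$, we have $B(\tilde d) = B + \delta a_j a_j^\top$, and Sherman–Morrison gives
\[
B(\tilde d)^{-1} = B^{-1} - \frac{\delta\, B^{-1} a_j a_j^\top B^{-1}}{1 + \delta\, a_j^\top B^{-1} a_j} = B^{-1} - \left(\frac{\delta}{1+\delta\gamma_j^2}\right) B^{-1} a_j a_j^\top B^{-1},
\]
which is \eqref{e: u_d_B}.

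\textbf{Step 2 (formula for $t(\tilde d,\ell)$).} Recall $t(\tilde d,\ell) = A^\top y(\tilde d,\ell) - r$ where $y(\tilde d,\ell) = B(\tilde d)^{-1} A \tilde D r$. First expand $A\tilde D r = ADr + \delta a_j (a_j^\top r) = ADr + \delta (r_j) a_j$. Then multiply \eqref{e: u_d_B} by this vector. The term $B(\tilde d)^{-1} A D r$ expands, using $y(d,\ell) = B^{-1}ADr$ and $a_j^\top y(d,\ell) = a_j^\top r + t_j = r_j + t_j$, while the term $\delta r_j B(\tilde d)^{-1} a_j$ expands using $a_j^\top B^{-1} a_j = \gamma_j^2$. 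After collecting terms and simplifying the scalar coefficients (this is the routine-calculation part; the key cancellation is that the $r_j$ contributions combine so that only the $t_j$ piece survives), one multiplies by $A^\top$ on the left, subtracts $r$, and obtains $t(\tilde d,\ell) = t - \left(\frac{\delta}{1+\delta\gamma_j^2}\right) t_j A^\top B^{-1} a_j$, which is \eqref{e: u_d_t}. Note $t_j(\tilde d,\ell)$ itself then equals $t_j - \frac{\delta\gamma_j^2}{1+\delta\gamma_j^2} t_j = \frac{t_j}{1+\delta\gamma_j^2}$, a fact useful for Step 3.

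\textbf{Step 3 (formula for $f(\tilde d,\ell)$).} Use $f(\tilde d,\ell) = v^\top \tilde D v - t(\tilde d,\ell)^\top \tilde D\, t(\tilde d,\ell)$. The first term is $v^\top D v + \delta v_j^2$. For the second term, split $\tilde D = D + \delta e_j e_j^\top$: write $t(\tilde d,\ell)^\top \tilde D\, t(\tilde d,\ell) = t(\tilde d,\ell)^\top D\, t(\tilde d,\ell) + \delta\, t_j(\tilde d,\ell)^2$. Substitute \eqref{e: u_d_t} into $t(\tilde d,\ell)^\top D\, t(\tilde d,\ell)$, expanding into three pieces: $t^\top D t$, a cross term involving $t^\top D A^\top B^{-1} a_j$, and a quadratic term involving $a_j^\top B^{-1} A D A^\top B^{-1} a_j = a_j^\top B^{-1} a_j = \gamma_j^2$. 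One also needs $t^\top D A^\top B^{-1} a_j$; this simplifies because $ADt = AD(A^\top y(d,\ell) - r) = B y(d,\ell) - ADr = 0$ by the definition of $y(d,\ell)$ — wait, more carefully, $ADr = B y(d,\ell)$ so $AD A^\top y(d,\ell) - ADr = 0$, i.e. $ADt = 0$, hence $t^\top D A^\top B^{-1} a_j = (ADt)^\top B^{-1} a_j = 0$. This is the crucial simplification. With that, $t(\tilde d,\ell)^\top D\, t(\tilde d,\ell) = t^\top D t + \left(\frac{\delta}{1+\delta\gamma_j^2}\right)^2 t_j^2 \gamma_j^2$. Combine with $\delta t_j(\tilde d,\ell)^2 = \delta t_j^2/(1+\delta\gamma_j^2)^2$ and simplify the rational expression in $\delta$: the sum is $t^\top D t + \frac{\delta t_j^2}{(1+\delta\gamma_j^2)^2}(\delta\gamma_j^2 + 1) \cdot \frac{1}{?}$ — carefully, $\left(\frac{\delta}{1+\delta\gamma_j^2}\right)^2\gamma_j^2 + \frac{\delta}{(1+\delta\gamma_j^2)^2} = \frac{\delta^2\gamma_j^2 + \delta}{(1+\delta\gamma_j^2)^2} = \frac{\delta(1+\delta\gamma_j^2)}{(1+\delta\gamma_j^2)^2} = \frac{\delta}{1+\delta\gamma_j^2}$. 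So the $t$-part of $f$ becomes $t^\top D t + \frac{\delta}{1+\delta\gamma_j^2} t_j^2$, and therefore $f(\tilde d,\ell) = (v^\top D v - t^\top D t) + \delta v_j^2 - \frac{\delta}{1+\delta\gamma_j^2} t_j^2 = 1 + \delta v_j^2 - \left(\frac{\delta}{1+\delta\gamma_j^2}\right) t_j^2$, using $f(d,\ell) = 1$. This is \eqref{e: u_d_f}.

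\textbf{Main obstacle.} The only real subtlety is recognizing and using the identity $ADt(d,\ell) = 0$ (equivalently $a_j^\top B^{-1} A D A^\top = a_j^\top$, so cross terms with $t$ vanish and the quadratic term collapses to $\gamma_j^2$); everything else is bookkeeping with the Sherman–Morrison scalar $\delta/(1+\delta\gamma_j^2)$ and careful tracking of the $r_j$ versus $t_j$ contributions in Step 2. I would present Step 1 in full, state the key identity $ADt = 0$ explicitly as a one-line lemma or inline remark, and then carry out Steps 2 and 3 with the algebra compressed.
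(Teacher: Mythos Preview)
Your proposal is correct and follows essentially the same route as the paper: Sherman--Morrison for \eqref{e: u_d_B}, then expand $y(\tilde d,\ell)=B(\tilde d)^{-1}A\tilde D r$ using $A\tilde D r = ADr + \delta r_j a_j$ to get \eqref{e: u_d_t}, and finally compute $\tilde t^\top \tilde D \tilde t$ using the identity $ADt=0$ (which the paper also invokes explicitly) to obtain \eqref{e: u_d_f}. The only cosmetic difference is that the paper introduces the shorthand $\theta := \delta/(1+\delta\gamma_j^2)$ and expands $\tilde t^\top \tilde D \tilde t$ in one pass rather than splitting $\tilde D = D + \delta e_j e_j^\top$ as you do, but the algebra and the key simplifications are identical.
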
 
\begin{proof} 
For notational convenience, we suppress the dependence on $d$ and $l$ in the quantities $y(\cdot, \cdot)$, $t(\cdot, \cdot)$, and $f(\cdot, \cdot)$, and we write $y$ for $y(d,\ell)$ and $\tilde y$ for $y(\tilde d, \ell) = y(d + \delta e_j, \ell )$, and similarly for $t$ and $f$; we also write $B$ for $B(d)$ and  $\tilde B$ for $B(\tilde d)$.   Let $\theta := \frac{\delta}{1+\delta \gamma_j(d,\ell)^2}$. From the Sherman-Morrison formula, (\ref{e: u_d_B}) holds, and hence
\begin{align}
\tilde{y} & = \tilde{B}^{-1} A\tilde{D}r \nonumber \\
& = \left(B^{-1} - \theta B^{-1}a_ja_j^{\top}B^{-1} \right) \left(ADr + \delta r_j a_j \right) \nonumber \\ 
& = y + (\delta r_j -\theta a_j^{\top} y - \theta \delta r_j \gamma_j^2)B^{-1}a_j \nonumber \\
& = y + \theta (r_j - a_j^{\top} y ) B^{-1}a_j \nonumber \\ 
& =  y - \theta t_j B^{-1}a_j. \label{e: u_d_y} 
\end{align} 
It follows from (\ref{e: u_d_y}) that $\tilde{t} = A^{\top} \tilde{y}- r = t - \theta t_j A^{\top}B^{-1} a_j$, and thus we have (\ref{e: u_d_t}). Hence, 
\begin{align} 
\tilde{t}^{\top} \tilde{D} \tilde{t} & = (t - \theta t_j A^{\top}B^{-1} a_j)^{\top} (D + \delta e_je_j^{\top}) (t- \theta t_j A^{\top}B^{-1} a_j) \nonumber \\ 
& = t^{\top} D t + \theta^2 t_j^2 \gamma_j^2 + \delta t_j^2 - 2 \delta \theta t_j^2 \gamma_j^2 + \delta \theta^2 t_j^2 \gamma_j^4 \nonumber \\
& = t^{\top} D t+ \theta t_j^2 \ , \label{e: u_d_tDt}
\end{align}
where the second equality follows from $ADt = 0$. Also, 
\begin{equation} \label{e: u_d_vDv} 
v^{\top} \tilde{D}v=  v^{\top}(D + \delta e_j e_j^{\top}) v = v^{\top}Dv + \delta v_j^2 \ .
\end{equation}
From (\ref{e: u_d_tDt}) and (\ref{e: u_d_vDv}),  
\begin{align} 
f(\tilde{d},\ell) & = v^{\top} D v - t^{\top} D t+ \delta v_j^2 - \theta t_j^2 \nonumber \\
& = f(d,\ell) + \delta v_j^2 - \theta t_j^2 \nonumber \\
& = 1 + \delta v_j^2 - \theta t_j^2 \ , \nonumber
\end{align} 
and thus (\ref{e: u_d_f}) holds. 
\end{proof} 

\begin{Prop} \label{p: ell_up} 
Suppose $d \in \mathbb{R}^m_{++}$ and $\ell \in \mathbb{R}^m$. Let $j \in [m]$, $\beta \in \mathbb{R}$, and $\tilde{\ell} = \ell + \beta e_j.$ Then
\begin{align} 
& y(d,\tilde{\ell}) = y(d,\ell) + \tfrac{1}{2} \beta d_j  (B(d))^{-1} a_j, \label{e: u_l_y} \\ 
& t(d,\tilde{\ell}) = t(d,\ell) + \tfrac{1}{2}   \beta d_j A^{\top} (B(d))^{-1} a_j - \tfrac{1}{2} \beta e_j, \label{e: u_l_t} \\ 
& f(d,\tilde{\ell}) = f(d,\ell) +  \beta (t_j(d,\ell) - v_j(\ell))d_j  + \tfrac{1}{4} \beta^2 d_j^2 a_j \adj B(d)^{-1}a_j \ . \label{e: u_l_f}
\end{align} 
\end{Prop}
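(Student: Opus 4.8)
The plan is to substitute the shifted lower bound $\tilde\ell = \ell + \beta e_j$ directly into the definitions of $y(\cdot,\cdot)$, $t(\cdot,\cdot)$, and $f(\cdot,\cdot)$ from Section \ref{s: geometry}, and to simplify using three elementary facts: that $B(d) = ADA^\top$ does not change when only $\ell$ is perturbed, that $ADe_j = d_j a_j$ and $e_j^\top D e_j = d_j$, and the orthogonality relation $AD\,t(d,\ell) = 0$ (which holds because $AD\,t(d,\ell) = ADA^\top y(d,\ell) - ADr(\ell) = B(d)y(d,\ell) - ADr(\ell) = 0$ by the definition of $y(d,\ell)$). First I would record that passing from $\ell$ to $\tilde\ell = \ell + \beta e_j$ sends $r(\ell) = \tfrac12(u+\ell)$ to $r(\tilde\ell) = r(\ell) + \tfrac12\beta e_j$ and $v(\ell) = \tfrac12(u-\ell)$ to $v(\tilde\ell) = v(\ell) - \tfrac12\beta e_j$, while $D$ and $B(d)$ are untouched.

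Given this, \eqref{e: u_l_y} is immediate: $y(d,\tilde\ell) = B(d)^{-1}ADr(\tilde\ell) = y(d,\ell) + \tfrac12\beta\,B(d)^{-1}ADe_j = y(d,\ell) + \tfrac12\beta d_j\,B(d)^{-1}a_j$. Then \eqref{e: u_l_t} follows by applying $A^\top$ and subtracting $r(\tilde\ell)$, namely $t(d,\tilde\ell) = A^\top y(d,\tilde\ell) - r(\tilde\ell) = t(d,\ell) + \tfrac12\beta d_j A^\top B(d)^{-1}a_j - \tfrac12\beta e_j$.

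For \eqref{e: u_l_f} I would write $t(d,\tilde\ell) = t(d,\ell) + w$ with $w := \tfrac12\beta\big(d_j A^\top B(d)^{-1}a_j - e_j\big)$ and expand $f(d,\tilde\ell) = v(\tilde\ell)^\top D v(\tilde\ell) - t(d,\tilde\ell)^\top D t(d,\tilde\ell)$ term by term. The $v$-part contributes $v(\ell)^\top D v(\ell) - \beta d_j v_j(\ell) + \tfrac14\beta^2 d_j$. In $t(d,\tilde\ell)^\top D t(d,\tilde\ell) = t(d,\ell)^\top D t(d,\ell) + 2\,t(d,\ell)^\top D w + w^\top D w$, the middle term reduces to $2\,t(d,\ell)^\top D w = -\beta d_j t_j(d,\ell)$ because $t(d,\ell)^\top D A^\top = (AD\,t(d,\ell))^\top = 0$ kills the $A^\top B(d)^{-1}a_j$ piece, while $w^\top D w = \tfrac14\beta^2\big(d_j^2 a_j^\top B(d)^{-1}a_j - 2 d_j^2 a_j^\top B(d)^{-1}a_j + d_j\big) = \tfrac14\beta^2\big(d_j - d_j^2 a_j^\top B(d)^{-1}a_j\big)$, using $ADA^\top = B(d)$ and $ADe_j = d_j a_j$ once more. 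Assembling these and noting that the two copies of $\pm\tfrac14\beta^2 d_j$ cancel leaves $f(d,\tilde\ell) = f(d,\ell) + \beta d_j\big(t_j(d,\ell) - v_j(\ell)\big) + \tfrac14\beta^2 d_j^2\, a_j^\top B(d)^{-1}a_j$, which is \eqref{e: u_l_f}.

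The argument is a routine calculation; the only step demanding care is the expansion of the quadratic form $t(d,\tilde\ell)^\top D t(d,\tilde\ell)$, where one must track all cross terms and invoke the orthogonality $AD\,t(d,\ell) = 0$ at the right moment to eliminate the mixed term. No result beyond the definitions of Section \ref{s: geometry} is needed; in particular, in contrast with Proposition \ref{p: d_up}, the Sherman--Morrison formula plays no role here since $B(d)$ is unchanged.
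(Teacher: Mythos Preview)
Your proof is correct and follows essentially the same approach as the paper's own proof: both substitute $\tilde\ell = \ell + \beta e_j$ into the definitions, derive \eqref{e: u_l_y} and \eqref{e: u_l_t} directly, and then expand the two quadratic forms $\tilde v^\top D \tilde v$ and $\tilde t^\top D \tilde t$ using the orthogonality $AD\,t(d,\ell)=0$ to kill the cross term. The only cosmetic difference is that you name the perturbation $w$ explicitly, whereas the paper expands the full expression for $\tilde t$ in one line.
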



\begin{proof} 
For notational convenience, we suppress the dependence on $d$ and $l$ in the quantities $y(\cdot, \cdot)$, $t(\cdot, \cdot)$, $f(\cdot, \cdot)$, $r(\cdot, \cdot)$, and $v(\cdot, \cdot)$, and we write $y$ for $y(d,\ell)$ and $\tilde y$ for $y(d,\tilde \ell) = y(d, \ell + \beta e_j)$, and similarly for $t$, $f$, $r$, and $v$; we also write $B$ for $B(d)$.  Because $\tilde{r} = r + (\beta/2)e_j$, 
$$\tilde{y} = B^{-1}AD\tilde{r} = y + \frac{\beta}{2} d_j B^{-1} a_j.$$ 
Thus, equation (\ref{e: u_l_y}) holds. From (\ref{e: u_l_y}), 
$$\tilde{t} = A^{\top} \tilde{y} - \tilde{r} =  t + \frac{\beta}{2} d_j A^{\top} B^{-1} a_j - \frac{\beta}{2} e_j,$$ 
and so equation (\ref{e: u_l_t}) holds. It follows from (\ref{e: u_l_t}) that 
\begin{align} 
& \tilde{t}^{\top} D \tilde{t} \nonumber \\
& = (t + \frac{\beta}{2} d_j A^{\top} B^{-1} a_j - \frac{\beta}{2} e_j)^{\top} D (t + \frac{\beta}{2} d_j A^{\top} B^{-1} a_j - \frac{\beta}{2} e_j) \nonumber \\
& = t^{\top} D t - \beta  t_j d_j + \frac{1}{4} \beta^2 (d_jA^{\top} B^{-1} a_j - e_j)^{\top} D (d_jA^{\top} B^{-1} a_j - e_j)  \nonumber \\ 
& = t^{\top} D t - \beta  t_j d_j + \frac{1}{4} \beta^2 (d_j-d_j^2 a_j \adj B^{-1}a_j ) \label{e: u_l_tDt}, 
\end{align} 
where the second equality is from $ADt = 0$. Because $\tilde{v} = v - (\beta/2) e_j$, 
\begin{equation} \label{e: u_l_vDv}  
\tilde{v}^{\top} D \tilde{v} = v^{\top} D v - \beta v_j d_j  + \frac{1}{4} \beta^2 d_j . 
\end{equation} 
From (\ref{e: u_l_tDt}) and (\ref{e: u_l_vDv}), 
\begin{align} 
f(d,\tilde{\ell}) & =  f(d,\ell) +  \beta d_j (t_j - v_j)  + \frac{1}{4}  \beta^2 d_j^2a_j \adj B^{-1}a_j \ , \nonumber
\end{align} 
and thus (\ref{e: u_l_f}) holds. 
\end{proof}

\section{Proofs of Results} 

\subsection{Proofs for Section \ref{s: updating_certificates}} \label{app-119}

\begin{proof}[Proof of Proposition  \ref{p: cert_const}]
We need to show that $\tilde{\lambda}_i$ and $L_i$ satisfy $\lbi$. First note that $\tilde{\lambda}_i  = \Lambda \hat{\lambda}_i^- +  \hat{\lambda}_i^+ \geq 0$ because $\Lambda \geq 0$, $\hat{\lambda}_i^- \geq 0$, and $ \hat{\lambda}_i^+ \geq 0$. Next observe that 
\begin{equation} \label{e: prop2_s1} 
A \tilde{\lambda}_i =  A\Lambda \hat{\lambda}_i^- + A \hat{\lambda}_i^+ = A(\hat{\lambda}_i^+ -  \hat{\lambda}_i^- ) = A \hat{\lambda}_i \ ,  
\end{equation} 
where the second equality follows from $A \Lambda = -A$. Also note that 
\begin{equation} \label{e: prop2_s2} 
A \hat{\lambda}_i = \gamma_i(d,\ell) A D t(d,\ell) - ADA^{\top} B(d)^{-1} a_i = -a_i \ ,
\end{equation} 
where the second equality follows from $A D t(d,\ell) = 0$ and the definition of $B(d)$. From (\ref{e: prop2_s1}) and (\ref{e: prop2_s2}), it holds that $A \tilde{\lambda}_i = -a_i$, and so it remains to verify that $-\tilde{\lambda}_i^{\top} u \geq L_i$. 

For notational convenience, define 
$$z = y(d,\ell) - \frac{1}{\gamma_i(d,\ell)} B(d)^{-1} a_i \ ,$$
and note that $\hat{\lambda}_i = \gamma_i(d,\ell) D(A^{\top} z - r(\ell))$. Also, for each $j \in [m]$, define 
$$\mu_j = \tfrac{1}{2} \gamma_i(d,\ell) d_j \left[ 2 a_j^{\top} z(a_j^{\top} z - r_j(\ell)) - (a_j^{\top}z - \ell_j) (a_j^{\top} z - u_j) \right] \ ,$$
and observe that if $(\hat{\lambda}_i)_j \neq 0$ (so $d_j \neq 0$ and $a_j^{\top} z \neq r_j(\ell)$), then 
\begin{align}
\frac{\mu_j}{(\hat{\lambda}_i)_j} = a_j^{\top}z - \frac{(a_j^{\top}z - \ell_j)(a_j^{\top}z - u_j)}{2(a_j^{\top}z-r_j(\ell))} & = u_j + \frac{(a_j^{\top}z-u_j)^2}{2(a_j^{\top}z - r_j(\ell))} \label{e: bd1}  \\
&  = \ell_j + \frac{(a_j^{\top}z-\ell_j)^2}{2(a_j^{\top}z - r_j(\ell))} \ . \label{e: bd2} 
\end{align}  
Now if $(\hat{\lambda}_i)_j > 0$, then $\sgn((\hat{\lambda}_i)_j) = \sgn(a_j^{\top}z - r_j(\ell))$, and it follows from multiplying (\ref{e: bd1}) by $(\hat{\lambda}_i)_j$ that 
$$\mu_j = (\hat{\lambda}_i)_j \left[ u_j + \frac{(a_j^{\top}z-u_j)^2}{2(a_j^{\top}z - r_j(\ell))} \right] \geq (\hat{\lambda}_i)_j u_j \ .$$
Similarly if $(\hat{\lambda}_i)_j < 0$, then $\sgn((\hat{\lambda}_i)_j) = \sgn(a_j^{\top}z - r_j(\ell))$, and it follows from multiplying (\ref{e: bd2}) by $(\hat{\lambda}_i)_j$ that 
$$\mu_j = (\hat{\lambda}_i)_j \left[ \ell_j + \frac{(a_j^{\top}z-\ell_j)^2}{2(a_j^{\top}z - r_j(\ell))} \right] \geq (\hat{\lambda}_i)_j \ell_j \ .$$
Finally, if $(\hat{\lambda}_i)_j = 0$, then either $d_j = 0$ in which case $\mu_j = 0$, or $a_j^{\top}z = r_j(\ell)$ in which case 
$$\mu_j = -\tfrac{1}{2} \gamma_i(d,\ell) d_j (a_j^{\top}z - \ell_j)(a_j^{\top}z - u_j) \geq 0 \ .$$
Thus from the above we obtain: 
\begin{align} 
(\hat \lambda_i^+)^{\top} u - (\hat \lambda_i ^-)^{\top} \ell \leq \sum_{j=1}^m \mu_j & =  \hat \lambda_i^{\top} A^{\top} z - \tfrac{1}{2} \gamma_i(d,\ell) \sum_{j=1}^m d_j(a_j^{\top} z - \ell_j)(a_j^{\top}z - u_j) \nonumber \\
& = -a_i^{\top} z - \tfrac{1}{2} \gamma_i(d,\ell) \sum_{j=1}^m d_j(a_j^{\top} z - \ell_j)(a_j^{\top}z - u_j) \nonumber \\
& = -a_i^{\top} z \nonumber  \\ & = \gamma_i(d,\ell) - a_i^{\top} y(d,\ell) \ , \label{e: almost_there} 
\end{align} 
where the second equality follows from (\ref{e: prop2_s2}) and the third equality follows from the fact that $z$ satisfies the inequality defining $E(d,\ell)$ with equality. Rearranging (\ref{e: almost_there}) yields  
$$L_i = a_i^{\top}y(d,\ell) - \gamma_i(d,\ell) \leq (\hat\lambda_i^-)^{\top} \ell - (\hat\lambda_i^+)^{\top} u \leq - (\hat\lambda_i^-)^{\top} \Lambda^{\top} u - (\hat\lambda_i^+)^{\top} u  = -\tilde\lambda_i^{\top} u \ ,$$
where the second inequality follows from $-\Lambda^{\top} u \geq \ell$ and $\hat\lambda_i^- \geq 0$. 
\end{proof} 


\begin{proof}[Proof of Proposition \ref{p: alg_correct_f_leq_0}]
Examining Step \ref{kanab} of Procedure \ref{a: cert_for_f_leq_0}, we see from Remark \ref{p: cert_inf_const-1} that $\bar\lambda_j$ is a type-\1 certificate of infeasibility if $\ell_j > u_j$.   If the procedure does not exit at Step \ref{kanab}, it holds that $\tfrac{1}{2}(u - \ell) = v(\ell) \ge 0$.   And from the discussion of Procedure \ref{a: cert_for_f_leq_0} directly preceding Proposition \ref{p: alg_correct_f_leq_0}, $\bar\lambda_k := \lambda_k + e_k$ is a type-\1 certificate of infeasibility as long as Steps \ref{s: beta} -- \ref{s: cert3} of Procedure \ref{a: cert_for_f_leq_0} can be executed as stipulated.  The only steps that require proof of such viability are Steps \ref{s: beta}, \ref{s: j}, \ref{s: k}, and \ref{s: eps}.  

We first examine Step \ref{s: beta}.  Let $i \in [m]$ be selected, and define $\bar{\ell} := \ell - \beta e_i$; then from Proposition \ref{p: ell_up} it follows that 
$$f(d,\bar{\ell}) := f(d,  \ell - \beta e_i) = f(d,\ell) - d_i(a_i^{\top}y(d,\ell) - u_i)\beta + \tfrac{1}{4} d_i^2a_i^{\top}B(d)^{-1}a_i \beta^2  \ , $$ using \eqref{e: u_l_f} and the fact that $t_i(d,\ell) - v_i(\ell) = a_i\adj y(d,\ell) -u_i$.  As the above expression is a strictly convex quadratic in $\beta$ and $f(d,\ell) \le 0$, there is a positive value of $\beta$ for which $ f(d,  \ell - \beta e_i) = 0$, and in fact using the quadratic formula this value of $\beta$ works out to be:
$$\beta = \frac{2 (a_i^{\top}y(d,\ell)-u_i) + 2\sqrt{ (a_i^{\top}y(d,\ell)-u_i)^2 - f(d,\ell) a_i^{\top} B(d)^{-1} a_i}}{d_i a_i^{\top}B(d)^{-1}a_i} \ .$$
Thus Step \ref{s: beta} is executable.  Let us next consider Step \ref{s: j}.  After Steps \ref{s: beta} and \ref{s: l_dec1} are computed, it holds that $f(d,\ell) = 0$.   We must show that there exists an index $j \in [m]$ such that $a_j^{\top} y(d,\ell) \leq u_j$. Suppose there is no such index; then for all $s \in [m]$ it holds that
$$t_s(d,\ell) = a_s^{\top} y(d,\ell) - r_s(\ell) > u_s - r_s(\ell) = v_s(\ell) \ge 0 \ , $$
where the last inequality follows since the original input lower bounds satisfied $v(\ell) \ge 0$ and the updated value of $\ell$ in Step \ref{s: l_dec1} is less than or equal to the original value, whereby it still holds that $v(\ell) \ge 0$.  It then follows that 
$$f(d,\ell) = v(\ell)^{\top} D v(\ell) - t(d,\ell)^{\top}D t(d,\ell) < 0 \ , $$
which yields a contradiction.  Thus there exists an index $j \in [m]$ such that $a_j^{\top} y(d,\ell) \leq u_j$, whereby Step \ref{s: j} is executable.

To see why Step \ref{s: k} is executable, note that the input to Procedure \ref{a: cert_for_f_leq_0} satisfied $f(d,\ell) \leq 0$ and $A\adj y(d,\ell) \not\le u$ (for the original input value $\ell$) and hence implied that $\mathcal{P} = \emptyset$.  Thus for any $y$ there is a violated inequality of the system $\poi$.

Last of all we show that Step \ref{s: eps} is implementable.  At the start of Step \ref{s: eps} we have $f(d,\ell) = 0$, $a_j^{\top} y(d,\ell) \leq u_j$, and $a_k^{\top} y(d,\ell) > u_k$. From Proposition \ref{p: ell_up} and the fact that $f(d,\ell) = 0$, it follows that 
\begin{align*} 
f(d,\ell - \varepsilon e_j) & = f(d,\ell) -\varepsilon (a_j^{\top} y(d,\ell)- u_j )d_j  + \varepsilon^2 \tfrac{1}{4}d_j^2 a_j^{\top} B(d)^{-1} a_j   \\
& =  \varepsilon (u_j - a_j^{\top} y(d,\ell))d_j  + \varepsilon^2 \tfrac{1}{4}d_j^2 a_j^{\top} B(d)^{-1} a_j \ ,
\end{align*} 
and thus $f(d,\ell - \varepsilon e_j)  > 0$ for all $\varepsilon > 0$.   Accordingly, it is sufficient to show that we can take $\varepsilon >0$ and sufficiently small such that $a_k^{\top} y(d,\ell - \varepsilon e_j) - \gamma_k(d,\ell - \varepsilon e_j) > u_k$.  Let us denote $\bar\ell :=\ell - \varepsilon e_j$ for notational convenience, and $h(\varepsilon):= a_k^{\top}y(d,\ell - \varepsilon e_j) - \gamma_k(d,\ell - \varepsilon e_j) - u_k$.  From Proposition \ref{p: ell_up} and the characterization of slab radii in \eqref{running}, it holds for all $\varepsilon >0$ that
\begin{small}
\begin{equation*}\begin{array}{rcl} 
 h(\varepsilon) & =  &a_k^{\top} y(d,\ell) - u_k - \varepsilon \tfrac{1}{2} d_j a_k^{\top} B(d)^{-1} a_j \\
& &- \left( \varepsilon (u_j - a_j^{\top} y(d,\ell))d_j  + \varepsilon^2 \tfrac{1}{4}d_j^2 a_j^{\top} B(d)^{-1} a_j  \right)^{1/2} (a_k^{\top} B^{-1}(d) a_k )^{1/2} \\ \\
& = &\delta - \varepsilon \tfrac{1}{2} d_j a_k^{\top} B(d)^{-1} a_j - \left( \varepsilon (u_j - a_j^{\top} y(d,\ell))d_j  + \varepsilon^2 \tfrac{1}{4}d_j^2 a_j^{\top} B(d)^{-1} a_j  \right)^{1/2} (a_k^{\top} B^{-1}(d) a_k )^{1/2} \ ,
\end{array}\end{equation*}
\end{small}

\noindent where $\delta := a_k^{\top} y(d,\ell) - u_k > 0$.  Now notice that $h(0)= \delta >0$ and by continuity it holds that $h(\varepsilon) > 0$ for all $\varepsilon >0$ and sufficiently small.  Thus Step \ref{s: eps} is implementable.  Furthermore, the equation $h(\varepsilon) = \delta/2$ can be rearranged so that squaring both sides yields a quadratic in $\varepsilon$, and so Step \ref{s: eps} can be implemented using the mechanics of the quadratic formula. \end{proof}

\subsection{Proofs for Section \ref{s: update} } \label{biden}
For notational convenience we define:
\begin{align*}  
& \beta^{(1)} :=  \frac{-2(t_j(d,\ell)-v_j(\ell))}{d_j\gamma_j(d,\ell)^2} \ , \\ 
& \beta^{(2)} := \frac{2(2v_j(\ell^{(1)}) - \gamma_j(d^{(1)},\ell^{(1)}))}{(m-1) d_j^{(1)} \gamma_j(d^{(1)},\ell^{(1)})^2 + 2} \ .
\end{align*} 
Note that $\ell^{(1)} = \ell + \beta^{(1)} e_j$ and $\ell^{(2)} = \ell^{(1)} + \beta^{(2)} e_j$. 


\begin{proof}[Proof of Lemma \ref{l: ss}] 
Observe that 
\begin{equation} \label{e: u1} 
\ell_j - \ell_j^{(1)} = \frac{2(t_j(d,\ell) - v_j(\ell))}{d_j \gamma_j(d,\ell)^2} = \frac{2(a_j^{\top} y(d,\ell) - u_j)}{d_j \gamma_j(d,\ell)^2} > 0 \ ,
\end{equation}
and so $\ell_j^{(1)} < \ell_j$ and hence $\ell^{(1)} \le \ell$, whereby $\ell^{(1)}$ is a lower bound for $\poi$ with certificate matrix $\Lambda$ since $\Lambda$ is a certificate matrix for $\ell$ and $\ell^{(1)} \le \ell$.  From (\ref{e: u_l_y}) with $\beta = \beta^{(1)}$ we have:
\begin{align} 
a_j^{\top} y(d, \ell^{(1)})  & = a_j^{\top} y(d,\ell) + \tfrac{1}{2} \beta^{(1)} d_j \gamma_j(d,\ell)^2 \nonumber \\
& = a_j^{\top} y(d,\ell) - (t_j(d,\ell) - v_j(\ell)) \nonumber \\ 
& = a_j^{\top} y(d,\ell) - (a_j^{\top}y(d,\ell) - r_j(\ell) - v_j(\ell)) \nonumber \\ 
& = u_j \ , \nonumber 
\end{align}  
which shows \eqref{e: shift}.  And from (\ref{e: u_l_f}) with $\beta = \beta^{(1)}$ we have:
\begin{align} 
f(d,\ell^{(1)}) & = 1 + \beta^{(1)} (t_j(d,\ell) - v_j(\ell))d_j + \tfrac{1}{4} (\beta^{(1)})^2 d_j^2 \gamma_j(d,\ell)^2 \nonumber \\
& =  1 - \left( \frac{t_j(d,\ell)-v_j(\ell)}{\gamma_j(d,\ell)} \right)^2 \nonumber \\ 
& =  1 - \left( \frac{a_j^{\top} y(d,\ell) - u_j}{\gamma_j(d,\ell)} \right)^2 \ , \label{e: u2}  
\end{align} 
which demonstrates the equality in \eqref{e: shrink}.  The inequality in \eqref{e: shrink} follows since $j$ is the index of a violated constraint, hence  $a_j^{\top} y(d,\ell) > u_j$.\end{proof} 


\begin{proof}[Proof of Lemma \ref{l: construct}]  We first prove item (a).  First suppose that $\beta^{(2)} \leq 0$.  Then $$\ell_j^{(2)} = \ell_j^{(1)} + \beta^{(2)} \leq \ell_j^{(1)} < \ell_j \le  \max \left \{ \ell_j, L_j   \right \}  , $$
where the strict inequality uses \eqref{e: u1}.  Next suppose that $\beta^{(2)} > 0$.  Then $\beta^{(2)} \leq 2v_j(\ell^{(1)}) - \gamma_j(d^{(1)},\ell^{(1)}),$ and therefore
\begin{align} 
\ell_j^{(2)} & \leq \ell_j^{(1)} +2v_j(\ell^{(1)}) - \gamma_j(d^{(1)},\ell^{(1)}) \nonumber \\
& = u_j -  2v_j(\ell^{(1)})  + 2v_j(\ell^{(1)}) - \gamma_j(d^{(1)},\ell^{(1)}) \nonumber \\
& = a_j^{\top} y(d,\ell^{(1)}) -\gamma_j(d^{(1)},\ell^{(1)}) \ , \label{e: u4} 
\end{align} 
where the last equality follows from \eqref{e: shift}.  Also note that 
\begin{equation} \label{e: utemp} 
\gamma_j(d,\ell^{(1)}) = f(d,\ell^{(1)})^{\frac{1}{2}} (a_j^{\top} B(d)^{-1} a_j)^{\frac{1}{2}} = f(d,\ell^{(1)})^{\frac{1}{2}}   \gamma_j(d,\ell). 
\end{equation} 
From the invariance of $\gamma_j(\cdot, \ell^{(1)})$ under positive scaling of the first argument, 
\begin{align} 
& \left(a_j^{\top} y(d,\ell) - \gamma_j(d,\ell) \right) - \left(a_j^{\top} y(d, \ell^{(1)}) - \gamma_j(d^{(1)}, \ell^{(1)}) \right) \nonumber \\ 
& = \gamma_j(d, \ell^{(1)}) - \gamma_j(d,\ell) +a_j^{\top} y(d,\ell) - a_j^{\top} y(d, \ell^{(1)}) \nonumber \\ 
& = \left( f(d,\ell^{(1)})^{\frac{1}{2}} - 1 \right) \gamma_j(d,\ell) + a_j^{\top} y(d,\ell) - u_j \nonumber \\ 
& = \left( \sqrt{1 - \left( \frac{a_j^{\top} y(d,\ell) - u_j}{\gamma_j(d,\ell)}  \right)^2  }- 1 \right)   \gamma_j(d,\ell) + a_j^{\top} y(d,\ell) - u_j \nonumber \\ 
& \geq  0,  \nonumber
\end{align} 
where the second equality follows from (\ref{e: shift}) and (\ref{e: utemp}), the third equality uses (\ref{e: u2}), and the inequality from the fact that $\sqrt{1-x^2} \geq 1-x$ for any scalar $0 \leq x \leq 1$.  Therefore
\begin{equation} \label{e: u5} 
 a_j^{\top} y(d ,\ell^{(1)}) -\gamma_j(d^{(1)},\ell^{(1)}) \leq a_j^{\top} y(d,\ell) - \gamma_j(d,\ell) \ , 
\end{equation} 
and (\ref{e: u4}) and (\ref{e: u5}) combine to yield  
$$\ell_j^{(2)} \le  a_j^{\top} y(d,\ell) - \gamma_j(d,\ell) \le  \max \left \{ \ell_j, L_j \right \} \ , $$
which completes the proof of (a).

Next, (b) is immediate since $f(d^{(1)},\ell^{(1)}) = 1$ from (\ref{e: update2}).

Finally, we prove (c).  We need to prove that 
\begin{equation} \label{e: fd2l2}
f(d^{(2)},\ell^{(2)}) = \frac{m^2}{m^2 - 1} = 1 + \frac{1}{m^2 - 1} \ .
\end{equation}
Note that
\begin{align}
f(d^{(2)},\ell^{(2)})  & = f(d^{(1)},\ell^{(1)}) + [ f(d^{(2)},\ell^{(1)}) - f(d^{(1)},\ell^{(1)}) ] + [ f(d^{(2)},\ell^{(2)}) - f(d^{(2)},\ell^{(1)}) ] \nonumber \\
  & = 1 + [ f(d^{(2)},\ell^{(1)}) - 1 ] + [ f(d^{(2)},\ell^{(2)}) - f(d^{(2)},\ell^{(1)}) ] \ . \label{e: 3terms}
\end{align}
We now proceed to evaluate the two terms in brackets.

 For notational convenience let 
\begin{align*} 
& y^{(1)} = y(d^{(1)},\ell^{(1)}) \ , \\
& t^{(1)} =  t(d^{(1)},\ell^{(1)}) \ , \\
& \gamma = \gamma_j(d^{(1)},\ell^{(1)}) \ , \\
& v_j^{(1)} = v_j(\ell^{(1)}) \ , \\
& r_j^{(1)} = r_j(\ell^{(1)}) \ , \\ 
& B = B(d^{(1)}) \ .
\end{align*} 
Then from Proposition \ref{p: d_up} with $\delta = 2/[(m-1)\gamma^2]$ and $\theta = \delta / (1 + \delta \gamma^2)$, we have $B(d^{(2)})^{-1} = B^{-1} - \theta B^{-1} a_j a_j^T B^{-1}$, and so
\[
a_j^{\top} B(d^{(2)})^{-1} a_j = \gamma^2 - \theta \gamma^4 = \frac{1}{1 + \delta \gamma^2} \gamma^2 = \frac{m-1}{m+1} \gamma^2 \ .
\]
Also, $t(d^{(2)},\ell^{(1)}) = t^{(1)} - \theta t_j ^{(1)}A^T B^{-1} a_j$, and so
\[
t_j(d^{(2)},\ell^{(1)}) = t_j^{(1)} (1 - \theta \gamma^2) = \frac{m-1}{m+1} t_j^{(1)} \ .
\]
Lastly,
\[
f(d^{(2)},\ell^{(1)}) - 1 = \frac{2}{(m-1)\gamma^2} (v_j^{(1)})^2 - \frac{2}{(m+1)\gamma^2} (t_j^{(1)})^2 \ .
\]
From the invariance of $y(\cdot,\ell^{(1)})$ under positive scaling and (\ref{e: shift}), it holds that 
\begin{equation} \label{e: utemp3} 
t_j^{(1)} = a_j^{\top} y^{(1)} - r_j^{(1)}= a_j^{\top} y(d,\ell^{(1)})- r_j^{(1)} = u_j - r_j^{(1)} = v_j^{(1)} \ , 
\end{equation} 
and so
\begin{equation}\label{e: diff1}
f(d^{(2)},\ell^{(1)}) - 1 = \frac{4}{(m^2 - 1)\gamma^2}  (v_j^{(1)})^2 \ .
\end{equation}

Next, from Proposition \ref{p: ell_up}, we have
\begin{equation} \label{e: second_diff}
f(d^{(2)},\ell^{(2)}) - f(d^{(2)},\ell^{(1)})  = \beta^{(2)} (t_j(d^{(2)},\ell^{(1)}) - v_j^{(1)}) d_j^{(2)} + \frac{1}{4} (\beta^{(2)})^2 (d_j^{(2)})^2 a_j^T (B(d^{(2)}))^{-1} a_j \ .
\end{equation}
Note that from the definition of $d_j^{(2)}$ and $\beta^{(2)}$, it follows that 
\[
\beta^{(2)} d_j^{(2)} = \frac{ 2 (2 v_j^{(1)} - \gamma) } { (m-1) \gamma^2 } \ .
\]
We can now evaluate the terms in (\ref{e: second_diff}).

Since $t_j(d^{(2)},\ell^{(1)}) = \frac{m-1}{m+1} t_j^{(1)} = \frac{m-1}{m+1} v_j^{(1)}$, the first term on the right-hand side is 
\[
\beta^{(2)} d_j^{(2)} \left( \frac{m-1}{m+1} v_j^{(1)} - v_j^{(1)} \right)  =  \frac{ 2 (2 v_j^{(1)} - \gamma) } { (m-1) \gamma^2 }  \cdot \frac{ -2 v_j^{(1)} } {m+1} =
  \frac{ -4 (2 (v_j^{(1)})^2 - \gamma v_j^{(1)} )} {(m^2 - 1) \gamma^2} .
  \]
  The second term on the right-hand side is equal to 
  \[
  \frac{1}{4} (\beta^{(2)} d_j^{(2)} )^2 \frac{m-1}{m+1} \gamma^2 = \frac{ ( 2 v_j^{(1)} - \gamma)^2 } {(m^2 - 1) \gamma^2}.
  \]
  Combining these terms and substituting them in (\ref{e: second_diff}) gives
  \[
f(d^{(2)},\ell^{(2)}) - f(d^{(2)},\ell^{(1)})  =   - \frac{4}{(m^2 - 1) \gamma^2}  (v_j^{(1)})^2 + \frac{1}{m^2 - 1} ,
\]
which with (\ref{e: diff1}) and (\ref{e: 3terms}) yields (\ref{e: fd2l2}).
\end{proof} 

\begin{proof}[Proof of Theorem \ref{t: ssc}] 
Define $\alpha(d,\ell) := \frac{m^2-1}{m^2} \frac{1}{f(d, \ell^{(1)}) }$. Note that $\alpha(d,\ell) > \frac{m^2-1}{m^2}$ because $0 < f(d,\ell^{(1)}) < 1$
using the hypothesis of the theorem and Lemma \ref{l: ss}. From the invariance of $\gamma_j(\cdot, \ell^{(1)})$ under positive scaling of the first argument, it holds that:
\begin{equation} \label{e: ssc1} 
\gamma_j(d^{(1)},\ell^{(1)})^2 = \gamma_j(d, \ell^{(1)})^2 = f(d,\ell^{(1)}) \gamma_j(d,\ell)^2 \ .
\end{equation} 
The result now follows from (\ref{e: update2}) and  Lemma \ref{l: construct} (c).
\end{proof} 


\subsection{Proofs for Section \ref{s: alg} } \label{finnigan}

\begin{proof}[Proof of Proposition \ref{l: 1}]  
For any $x \in E(d,\ell)$ it holds that
$$
d_i(x-y(d,\ell)) a_ia_i^{\top} (x-y(d,\ell))   \leq (x - y(d,\ell) )^{\top} ADA^{\top} (x - y(d,\ell)) \leq f(d,\ell) \ , $$
and therefore $\lvert a_i^{\top}x - a_i^{\top} y(d,\ell) \rvert \leq \left( \frac{f(d,\ell)}{d_i} \right)^{\frac{1}{2}}$ for all $x \in E(d,\ell)$. The result then follows from the definition of $\gamma_i(d,\ell)$.
\end{proof} 

\begin{proof}[Proof of Lemma \ref{t: main_t}]

We first show that 
\begin{equation} \label{e: term1} 
\mu_j(\tilde{d}) \leq \frac{m}{m+1} \mu_j(d) \ .
\end{equation} 
Note that $\mu_j(d) =  \sqrt{\frac{f(d,\ell)}{d_j}}$ because $ \sqrt{\frac{f(d,\ell)}{d_j}} \geq \tau(A,u) \geq \frac{m}{m+1} \tau(A,u)$. From Proposition \ref{l: 1} and $\alpha \geq \frac{m^2-1}{m^2}$ it follows that:
$$\frac{\tilde d_j}{f(\tilde d,\ell)}= \alpha \left(\frac{d_j}{f(d,\ell)} + \frac{2}{m-1} \frac{1}{\gamma_j(d,\ell)^2} \right) \geq \frac{m^2-1}{m^2} \left(1 + \frac{2}{m-1} \right) \frac{d_j}{f(d,\ell)} = \left( \frac{m+1}{m} \right)^2 \frac{d_j}{f(d,\ell)} \ , $$ 
and therefore
\begin{align*} 
\mu_j(\tilde{d}) & = \max \left \{ \sqrt{\frac{f(\tilde d,\ell)}{\tilde d_j}}, \frac{m}{m+1}  \tau(A,u) \right \} \\
& \leq  \max \left \{ \frac{m}{m+1}\sqrt{\frac{f(d,\ell)}{d_j}}, \frac{m}{m+1}  \tau(A,u) \right \} = \frac{m}{m+1} \mu_j(d) \ , 
\end{align*} 
where the last equality follows from $\sqrt{\frac{f(d,\ell)}{d_j}} \geq \tau(A,u)$ and $\mu_j(d) = \sqrt{\frac{f(d,\ell)}{d_j}}$. 

Next we show that for all $i \in [m]$, $i \neq j$, it holds that:
\begin{equation} \label{e: term2} 
\mu_i(\tilde{d}) \leq \left( \frac{m^2}{m^2-1} \right)^{\frac{1}{2}} \mu_i(d) \ .
\end{equation} 
 Note that $\tilde{d}_i / f(\tilde{d},\ell) = \alpha d_i / f(d,l)$, from which
 \[
 \sqrt{\frac{f(\tilde{d},\ell)}{\tilde{d}_i}} = \sqrt{\frac{1}{\alpha}} \sqrt{\frac{f(d,l)}{d_i}} \leq  \left(\frac{m^2}{m^2-1}\right)^\frac{1}{2}\sqrt{ \frac{f(d,l)}{d_i}} . 
 \]
 Thus if $\mu_i(\tilde{d}) =  \frac{m}{m+1} \tau(A,u)$, we have
 \[
  \mu_i(\tilde{d})  = \frac{m}{m+1} \tau(A,u) \leq \left( \frac{m^2}{m^2-1} \right)^{\frac{1}{2}}  \frac{m}{m+1} \tau(A,u)
      \leq \left( \frac{m^2}{m^2-1} \right)^{\frac{1}{2}} \mu_i(d),
      \]
while if $\mu_i(\tilde{d}) =  \sqrt{\frac{f(\tilde{d},\ell)}{\tilde{d}_i}} $, we have
\[
\mu_i(\tilde{d}) =  \sqrt{\frac{f(\tilde{d},\ell)}{\tilde{d}_i}} \leq  \left(\frac{m^2}{m^2-1}\right)^\frac{1}{2}\sqrt{ \frac{f(d,l)}{d_i}} \leq 
\left( \frac{m^2}{m^2-1} \right)^{\frac{1}{2}} \mu_i(d).
\]
Together these establish (\ref{e: term2}).
 
  Thus from (\ref{e: term1}) and (\ref{e: term2}) we obtain:
\begin{align*} 
\phi(\tilde d, \ell) & \leq \phi(d, \ell)\left( \frac{m}{m+1} \right) \left( \frac{m^2}{m^2 -1} \right)^{(m-1)/2} \\
& = \phi(d, \ell)\left( 1 - \frac{1}{m+1} \right) \left(1 + \frac{1}{m^2-1} \right)^{(m-1)/2} \\
& \leq \phi(d, \ell) e^{-\frac{1}{(m+1)}} \left(e^{-\frac{1}{(m^2-1)}} \right)^{(m-1)/2} \\ 
& = \phi(d, \ell)e^{-\frac{1}{2(m+1)}} \ , 
\end{align*} 
where the second inequality follows from the fact that $1 + x \leq e^x$ for any scalar $x$.  
\end{proof}

\begin{proof}[Proof of Lemma \ref{l: volume_dec}] 
For notational convenience let us assume that $d$ and $\tilde d$ have been rescaled so that $f(d,\ell)=1$ and $f(\tilde d, \tilde \ell)=1$.  Then the volume ratio $E(\tilde{d},\tilde{\ell})$ and $E(d,\ell)$ is:
\begin{align*} 
& \frac{\vol E(\tilde{d}, \tilde{\ell})}{\vol E(d,\ell)} = \left( \frac{\det (ADA^{\top})}{ \alpha^n \det \left(ADA^{\top} + \frac{2}{m-1} \frac{1}{\gamma_j(d,\ell)^2} a_j a_j^{\top}\right)} \right)^{\frac{1}{2}} = \left(\frac{m-1}{m+1} \right)^\frac{1}{2} \left(\frac{1}{\alpha} \right)^{\frac{n}{2}}  \\ 
& \leq  \left(\frac{m-1}{m+1} \right)^\frac{1}{2} \left(\frac{m^2}{m^2-1} \right)^{\frac{n}{2}} \leq  \left(\frac{m-1}{m+1} \right)^\frac{1}{2} \left(\frac{m^2}{m^2-1} \right)^{\frac{m}{2}} \\ 
& =  \left( \frac{m}{m+1} \right) \left( \frac{m^2}{m^2 -1} \right)^{(m-1)/2} \leq e^{-\frac{1}{2(m+1)}}, 
\end{align*} 
where the second equality uses the matrix determinant lemma, and the last inequality follows from the fact that $1 + x \leq e^x$ for any scalar $x$. 
\end{proof} 

\begin{Prop}\label{evie} $\det(AA\adj) \ge \cc^{2n}$.
\end{Prop}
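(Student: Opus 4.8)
The plan is to bound $\det(AA\adj)$ below by the $n$-th power of the smallest eigenvalue of $AA\adj$, and in turn to bound that eigenvalue below by $\cc^2$. The crux is the following geometric fact about the condition measure $\cc$ defined in \eqref{fridays}: \emph{every unit vector $v \in \mathbb{R}^n$ satisfies $\max_{i \in [m]} a_i\adj v \ge \cc$.} (We may assume $\cc > 0$, since otherwise the claimed inequality $\det(AA\adj) \ge \cc^{2n}$ is immediate: $AA\adj$ is positive definite by Assumption \ref{assu1}, so its determinant is positive.) I would prove this fact by contraposition. Suppose some $v$ with $\|v\| = 1$ has $a_i\adj v < \cc$ for all $i$, and define a perturbation $\Delta A \in \mathbb{R}^{n \times m}$ column by column via $\Delta a_i := -\max\{a_i\adj v,\,0\}\, v$. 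Recalling that the $\|\cdot\|_{1,2}$ operator norm of a matrix equals the largest Euclidean norm of its columns, we get $\|\Delta A\|_{1,2} = \max_i \max\{a_i\adj v,\,0\} < \cc$ (here one uses $\cc > 0$ together with $\max_i a_i\adj v < \cc$); yet $(a_i + \Delta a_i)\adj v = a_i\adj v - \max\{a_i\adj v,\,0\} \le 0$ for every $i$, so the nonzero vector $v$ solves $[A + \Delta A]\adj v \le 0$. This contradicts the defining property of $\cc$ as the minimum $\|\cdot\|_{1,2}$-norm of a perturbation with that property.

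With the fact in hand, the remainder is routine linear algebra. The smallest eigenvalue of the symmetric positive definite matrix $AA\adj$ is $\lambda_{\min} = \min_{\|v\|=1} v\adj AA\adj v = \min_{\|v\|=1} \sum_{i=1}^m (a_i\adj v)^2$. For any unit $v$, pick (using the fact just proved) an index $i_0$ with $a_{i_0}\adj v \ge \cc \ge 0$; then $\sum_i (a_i\adj v)^2 \ge (a_{i_0}\adj v)^2 \ge \cc^2$, and minimizing over $v$ gives $\lambda_{\min} \ge \cc^2$. Finally, $\det(AA\adj)$ equals the product of the $n$ eigenvalues of $AA\adj$, each of which is at least $\lambda_{\min} \ge \cc^2$, so $\det(AA\adj) \ge \lambda_{\min}^{\,n} \ge \cc^{2n}$.

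I expect the only genuinely nontrivial step to be the geometric fact — specifically, recognizing that $\cc$, the distance to unboundedness, is at most the "depth" $\max_i a_i\adj v$ of the generating cone $\{A\lambda : \lambda \ge 0\}$ in each direction $v$, and writing down the correct explicit witnessing perturbation (the one that exactly cancels the positive parts of the inner products $a_i\adj v$ and does nothing else). Once that lemma is established, both the eigenvalue bound and the passage to the determinant are immediate.
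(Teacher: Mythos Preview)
Your proof is correct and follows essentially the same approach as the paper: reduce to $\lambda_{\min}(AA\adj)\ge\cc^2$ via the key claim that $\max_i a_i\adj v \ge \cc$ for every unit $v$, proved by exhibiting a small perturbation that would otherwise violate the definition of $\cc$. The only cosmetic difference is the witnessing perturbation: the paper uses the uniform rank-one shift $\Delta A = -\alpha\cc\,\bar v\,e\adj$ (with $\alpha<1$) and applies it to $-\bar v$, whereas you use the columnwise $\Delta a_i = -(a_i\adj v)^+\,v$ applied to $v$ itself; both work, and yours is arguably tighter since it perturbs only where needed.
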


\begin{proof}  If $\cc =0$ the result is clearly true, so suppose for the rest of the proof that $\cc >0$.  We claim that \begin{equation}\label{quiet} \mbox{for~each~} v \ \mbox{satisfying~} \|v\|=1 \ \mbox{there~exists~} i \in [m] \ \mbox{satisfying} \ a_i\adj v \ge \cc \ . \end{equation}
If the claim is true, then for any $v$ with $\|v\|=1$ it holds that $v\adj AA\adj v = \sum_{j=1}^m (a_j\adj v)^2 \ge   (a_i\adj v)^2 \ge \cc^2$, and so the smallest eigenvalue of $AA\adj$ is at least $\cc^2$, whereby $\det(AA\adj) \ge \cc^{2n}$, which proves the result.

We now prove \eqref{quiet} by contradiction.  Suppose that the claim is false.  Then there exists $\bv$ with $\|\bv\| =1$ and 
$0 < \alpha < 1$ 
 satisfying $A\adj \bv \le \alpha \cc e$.  Define $\Delta A := -\alpha \cc \bv e\adj$ and note that $$(A + \Delta A)\adj (-\bv) = -A\adj \bv + 
 \alpha \rho(A) \bv\adj \bv e 
 \ge -\alpha \cc e + \alpha \cc e = 0 \ , $$ and so from the definition of $\cc$ in \eqref{fridays} it must hold that $\|\Delta A\|_{1,2} \ge \cc$.  However, it is simple to verify that $\|\Delta A\|_{1,2} =\|-\alpha \cc \bv e\adj \|_{1,2} = \alpha\cc < \cc$ which provides the desired contradiction, 
 establishing \eqref{quiet}.
  \end{proof}

\subsection{Proofs for Section \ref{covid3} } \label{vistalives}
\begin{proof}[Proof of Proposition \ref{covid2}] 

We will establish a more general result that will imply the correctness of Procedure \ref{postc} as a particular instance.  Recall from \eqref{covid1} that the certificate matrices satisfy the recursion $\Lambda^{(i)} =  \Lambda^{(i-1)}M_{(i)} + B_{(i)}$ for $i=1, \ldots, k$.  Suppose we are given vectors $w^k$ and $z^k$, and we wish to compute $\bar v := \Lambda^{(k)} w^k + z^k$. We claim that the following procedure accomplishes this task:

\medskip 

\begin{algorithmic}[1] 
\For{$i = k:1$}
\State $w^{i-1} \leftarrow M_{(i)} w^i$
\State $z^{i-1} \leftarrow B_{(i)} w^i + z^i$
\EndFor
\State Return $\bar v := \Lambda^{(0)}w^0 + z^0$  
\end{algorithmic}


\medskip

It is straightforward to prove using induction that the returned value $\bar v$ satisfies $\bar v = \Lambda^{(k)} w^k + z^k$. Now notice that Procedure \ref{postc} is simply an instantiation of the above procedure with $w^k := e_{j_k}$, $z^k := e_{j_k}$, and the formulas for $M_{(i)}$ and $B_{(i)}$ in Section \ref{covid3}.
\end{proof}

\bibliographystyle{amsplain}
\bibliography{GF-papers-nips-better
}

\end{document}